\tikzset{vertex/.style={circle,draw,fill,inner sep=0pt,minimum size=1mm}}
\tikzset{vertex/.style={circle,draw,fill,inner sep=0pt,minimum size=1mm}}
\theoremstyle{plain}
\newtheorem{thm}{Theorem}
\newtheorem{lem}[thm]{Lemma}
\newtheorem{prop}[thm]{Proposition}
\newtheorem{cor}[thm]{Corollary}
\newtheorem{remark}[thm]{Remark}
\theoremstyle{definition}
\newtheorem{definition}[thm]{Definition}
\newtheorem{exl}[thm]{Example}
\numberwithin{thm}{section}
\newcommand{\adj}{\leftrightarrow}
\newcommand{\adjeq}{\leftrightarroweq}
\DeclareMathOperator{\id}{id}
\def\N{{\mathbb N}}
\def\R{{\mathbb R}}
\DeclareMathOperator{\Fix}{Fix}
\newcommand{\Z}{\mathbb{Z}}
\begin{document}
\title{Fixed Point Sets in Digital Topology, 2}
\author{Laurence Boxer
         \thanks{
    Department of Computer and Information Sciences,
    Niagara University,
    Niagara University, NY 14109, USA;
    and Department of Computer Science and Engineering,
    State University of New York at Buffalo
    email: boxer@niagara.edu
    }
}
\date{}
\maketitle

\begin{abstract}
We continue the work of~\cite{bs19a}, studying
properties of digital 
images determined by fixed point invariants.
We introduce pointed versions of invariants
that were introduced in~\cite{bs19a}. We 
introduce freezing sets and cold sets to show
how the existence of a fixed point set for
a continuous self-map restricts the map on
the complement of the fixed point set.
\end{abstract}

\section{Introduction}
As stated in~\cite{bs19a}:
\begin{quote}
Digital images are often used as
mathematical models of real-world objects.
A digital model of the notion of a continuous function,
borrowed from the study of topology, is
often useful for the study of  digital images.
However, a digital image is typically
a finite, discrete point set. Thus,
it is often necessary to study digital
images using methods not directly
derived from topology. In this paper,
we examine some properties of digital 
images concerned with the
fixed points of digitally
continuous functions; among
these properties are discrete measures
that are not natural analogues of
properties of subsets of $\R^n$.
\end{quote}

In~\cite{bs19a}, we studied
rigidity, pull indices, fixed
point spectra for digital
images and for digitally continuous functions, and related notions. In the current
work, we study pointed versions
of notions introduced in~\cite{bs19a}.
We also study such questions
as when a set of fixed points
$\Fix(f)$ determines that
$f$ is an identity function, or is 
``approximately" an identity function.

Some of the results in this paper were presented in~\cite{BxVigo}.

\section{Preliminaries}
Much of this section is quoted
or paraphrased 
from~\cite{bs19a}.

Let $\N$ denote the set of natural numbers; 
$\N^* = \{0\} \cup \N$, the
set of nonnegative integers;
and $\Z$, the set of integers. $\#X$ will be
used for the number of members of a set~$X$.

\subsection{Adjacencies}
A digital image is a pair $(X,\kappa)$ where
$X \subset \Z^n$ for some $n$ and $\kappa$ is
an adjacency on $X$. Thus, $(X,\kappa)$ is a graph
for which $X$ is the vertex set and $\kappa$ 
determines the edge set. Usually, $X$ is finite,
although there are papers that consider infinite $X$. Usually, adjacency reflects some type of
``closeness" in $\Z^n$ of the adjacent points.
When these ``usual" conditions are satisfied, one
may consider the digital image as a model of a
black-and-white ``real world" image in which
the black points (foreground) are represented by 
the members of $X$ and the white points 
(background) by members of $\Z^n \setminus \{X\}$.

We write $x \adj_{\kappa} y$, or $x \adj y$ when
$\kappa$ is understood or when it is unnecessary to
mention $\kappa$, to indicate that $x$ and
$y$ are $\kappa$-adjacent. Notations 
$x \adjeq_{\kappa} y$, or $x \adjeq y$ when
$\kappa$ is understood, indicate that $x$ and
$y$ are $\kappa$-adjacent or are equal.

The most commonly used adjacencies are the
$c_u$ adjacencies, defined as follows.
Let $X \subset \Z^n$ and let $u \in \Z$,
$1 \le u \le n$. Then for points
\[x=(x_1, \ldots, x_n) \neq (y_1,\ldots,y_n)=y\]
we have $x \adj_{c_u} y$ if and only if
\begin{itemize}
    \item for at most $u$ indices $i$ we have
          $|x_i - y_i| = 1$, and
    \item for all indices $j$, $|x_j - y_j| \neq 1$
          implies $x_j=y_j$.
\end{itemize}

The $c_u$-adjacencies are often denoted by the
number of adjacent points a point can have in the
adjacency. E.g.,
\begin{itemize}
\item in $\Z$, $c_1$-adjacency is 2-adjacency;
\item in $\Z^2$, $c_1$-adjacency is 4-adjacency and
      $c_2$-adjacency is 8-adjacency;
\item in $\Z^3$, $c_1$-adjacency is 8-adjacency,
      $c_2$-adjacency is 18-adjacency, and 
      $c_3$-adjacency is 26-adjacency.
\end{itemize}

The literature also contains several adjacencies
to exploit properties of Cartesian products
of digital images. These include the following.

\begin{definition}
\cite{Berge}
Let $(X,\kappa)$ and $(Y, \lambda)$ be digital
images. The {\em normal product adjacency} or
{\em strong adjacency} on $X \times Y$,
$NP(\kappa, \lambda)$, is defined as follows.
Given $x_0, x_1 \in X$, $y_0, y_1 \in Y$ such that
\[p_0=(x_0,y_0) \neq (x_1,y_1)=p_1, \]
we have $p_0 \adj_{NP(\kappa,\lambda)} p_1$ if
and only if one of the following is valid:
\begin{itemize}
    \item $x_0 \adj_{\kappa} x_1$ and $y_0=y_1$, or
    \item $x_0 = x_1$ and $y_0 \adj_{\lambda} y_1$,
          or
    \item $x_0 \adj_{\kappa} x_1$ and
          $y_0 \adj_{\lambda} y_1$.
\end{itemize}
\end{definition}

Building on the normal product adjacency, we have
the following.

\begin{definition}
{\rm \cite{BxNormal}}
Given $u, v \in \N$, $1 \le u \le v$, and digital
images $(X_i, \kappa_i)$, $1 \le i \le v$, let
$X = \Pi_{i=1}^v X_i$. The adjacency
$NP_u(\kappa_1, \ldots, \kappa_v)$ for $X$ is
defined as follows. Given $x_i, x_i' \in X_i$, let
\[p=(x_1, \ldots, x_v) \neq (x_1', \ldots, x_v')=q. \]
Then $p \adj_{NP_u(\kappa_1, \ldots, \kappa_v)} q$
if for at least 1 and at most $u$ indices $i$ we have
$x_i \adj_{\kappa_i} x_i'$ and for all other
indices $j$ we have $x_j = x_j'$.
\end{definition}

Notice $NP(\kappa, \lambda)= NP_2(\kappa, \lambda)$
\cite{BxNormal}.

Let $x \in (X,\kappa)$. We use the notations
\[ N(x) = N_{\kappa}(x) = \{ y \in X \, | \, y \adj_{\kappa} x \}
\]
and
\[ N^*(x) = N_{\kappa}^*(x) = N_{\kappa}(x) \cup \{x\}.
\]

\subsection{Digitally continuous functions}
We denote by $\id$ or $\id_X$ the
identity map $\id(x)=x$ for all $x \in X$.

\begin{definition}
{\rm \cite{Rosenfeld, Bx99}}
Let $(X,\kappa)$ and $(Y,\lambda)$ be digital
images. A function $f: X \to Y$ is 
{\em $(\kappa,\lambda)$-continuous}, or
{\em digitally continuous} when $\kappa$ and
$\lambda$ are understood, if for every
$\kappa$-connected subset $X'$ of $X$,
$f(X')$ is a $\lambda$-connected subset of $Y$.
If $(X,\kappa)=(Y,\lambda)$, we say a function
is {\em $\kappa$-continuous} to abbreviate
``$(\kappa,\kappa)$-continuous."
\end{definition}

\begin{thm}
{\rm \cite{Bx99}}
A function $f: X \to Y$ between digital images
$(X,\kappa)$ and $(Y,\lambda)$ is
$(\kappa,\lambda)$-continuous if and only if for
every $x,y \in X$, if $x \adj_{\kappa} y$ then
$f(x) \adjeq_{\lambda} f(y)$.
\end{thm}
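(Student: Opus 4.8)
The plan is to establish the two implications separately, relying on the standard characterization that a digital image $(X,\kappa)$ is $\kappa$-connected exactly when any two of its points are joined by a $\kappa$-path, that is, a finite sequence of points of $X$ in which consecutive terms are $\kappa$-adjacent.

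For the forward implication I would assume $f$ is $(\kappa,\lambda)$-continuous and take $x,y \in X$ with $x \adj_\kappa y$. Then $\{x,y\}$ is a $\kappa$-connected subset of $X$, so by hypothesis its image $\{f(x),f(y)\}$ is a $\lambda$-connected subset of $Y$. A set with at most two points is $\lambda$-connected precisely when it is a singleton or its two points are $\lambda$-adjacent, so in either case $f(x) \adjeq_\lambda f(y)$, which is what is required.

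For the converse I would assume the stated adjacency condition and let $X'$ be an arbitrary $\kappa$-connected subset of $X$. Given $u,v \in f(X')$, I would choose $a,b \in X'$ with $f(a)=u$ and $f(b)=v$, and a $\kappa$-path $a = x_0, x_1, \ldots, x_k = b$ lying in $X'$. Applying the hypothesis to each consecutive pair gives $f(x_{i-1}) \adjeq_\lambda f(x_i)$ for $1 \le i \le k$, and deleting from the sequence $f(x_0), \ldots, f(x_k)$ every term equal to its predecessor produces a genuine $\lambda$-path in $f(X')$ joining $u$ to $v$. As $u,v$ were arbitrary, $f(X')$ is $\lambda$-connected.

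I do not expect a real obstacle in either direction; the one place that needs care is the bookkeeping in the converse, since the image of a $\kappa$-path is not literally a $\lambda$-path (consecutive points may be identified by $f$) and one must collapse such repetitions to extract an honest path. The degenerate cases in which $X'$ is empty or a singleton also deserve a brief remark, but they are immediate, since the image is then empty or a singleton and hence trivially $\lambda$-connected.
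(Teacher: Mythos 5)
The paper states this result without proof, citing \cite{Bx99}, so there is no in-paper argument to compare against; your proof is the standard one and is correct. Both directions are handled properly: the forward direction correctly uses that a two-point $\lambda$-connected set is either a singleton or a pair of adjacent points, and the converse correctly notes the one subtlety---that the image of a $\kappa$-path need only have consecutive terms adjacent \emph{or equal}, so repeated terms must be collapsed to obtain a genuine $\lambda$-path in $f(X')$.
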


\begin{thm}
\label{composition}
{\rm \cite{Bx99}}
Let $f: (X, \kappa) \to (Y, \lambda)$ and
$g: (Y, \lambda) \to (Z, \mu)$ be continuous 
functions between digital images. Then
$g \circ f: (X, \kappa) \to (Z, \mu)$ is continuous.
\end{thm}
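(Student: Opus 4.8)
The plan is to prove Theorem~\ref{composition} directly from the definition of digital continuity phrased in terms of connected subsets, under which the argument is essentially immediate. First I would let $X'$ be an arbitrary $\kappa$-connected subset of $X$. Since $f$ is $(\kappa,\lambda)$-continuous, $f(X')$ is a $\lambda$-connected subset of $Y$. Applying continuity of $g$ to the $\lambda$-connected set $f(X')$, I conclude that $g(f(X'))$ is a $\mu$-connected subset of $Z$.

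The only step that warrants any comment is the set-theoretic identity $(g \circ f)(X') = g(f(X'))$, which holds for arbitrary functions and arbitrary $X'$; combining it with the previous paragraph shows that $(g \circ f)(X')$ is $\mu$-connected for every $\kappa$-connected $X' \subseteq X$, which is precisely the definition of $(g \circ f)$ being $(\kappa,\mu)$-continuous. Hence the proof reduces to a short chain of implications with no quantifier juggling.

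An alternative route would use the adjacency characterization stated immediately above the theorem: given $x \adj_{\kappa} y$ in $X$, continuity of $f$ yields $f(x) \adjeq_{\lambda} f(y)$, and then a brief case split according to whether $f(x) = f(y)$ or $f(x) \adj_{\lambda} f(y)$, together with continuity of $g$, gives $(g \circ f)(x) \adjeq_{\mu} (g \circ f)(y)$; in the equality case one simply observes $(g \circ f)(x) = (g \circ f)(y)$, so $\adjeq_{\mu}$ holds trivially. I expect no genuine obstacle in either version — the ``hard part'' is only the choice of the connectedness formulation, under which the statement is a two-line deduction; the adjacency version costs one extra case, namely remembering to dispose of the ``or equal'' alternative.
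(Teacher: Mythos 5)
Your argument is correct: the paper states Theorem~\ref{composition} without proof, citing~\cite{Bx99}, and your connectedness argument (that $(g\circ f)(X')=g(f(X'))$ is $\mu$-connected whenever $X'$ is $\kappa$-connected) is precisely the standard two-line deduction given there, with the adjacency-based alternative equally valid. No gaps.
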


It is common to use the term {\em path} with the following distinct but related
meanings. 
\begin{itemize}
    \item A path from $x$ to $y$ in a digital image $(X,\kappa)$ is a set
          $\{x_i\}_{i=0}^m \subset X$ such that $x_0=x$, $x_m=y$, and
          $x_i \adjeq_{\kappa} x_{i+1}$ for $i=0,1,\ldots,m-1$. If the $x_i$ are distinct,
          then $m$ is the {\em length} of this path.
    \item A path from $x$ to $y$ in a digital image $(X,\kappa)$ is a
          $(2,\kappa)$-continuous function $P: [0,m]_{\Z} \to X$ such that $P(0)=x$ and
          $P(m)=y$. Notice that in this usage, $\{P(0), \ldots, P(m)\}$ is a path in the previous
          sense.
\end{itemize}

\begin{definition}
{\rm (\cite{Bx99}; see also \cite{Khalimsky})}
\label{htpy-2nd-def}
Let $X$ and $Y$ be digital images.
Let $f,g: X \rightarrow Y$ be $(\kappa,\kappa')$-continuous functions.
Suppose there is a positive integer $m$ and a function
$h: X \times [0,m]_{\Z} \rightarrow Y$
such that

\begin{itemize}
\item for all $x \in X$, $h(x,0) = f(x)$ and $h(x,m) = g(x)$;
\item for all $x \in X$, the induced function
      $h_x: [0,m]_{\Z} \rightarrow Y$ defined by
          \[ h_x(t) ~=~ h(x,t) \mbox{ for all } t \in [0,m]_{\Z} \]
          is $(2,\kappa')-$continuous. That is, $h_x$ is a path in $Y$.
\item for all $t \in [0,m]_{\Z}$, the induced function
         $h_t: X \rightarrow Y$ defined by
          \[ h_t(x) ~=~ h(x,t) \mbox{ for all } x \in  X \]
          is $(\kappa,\kappa')-$continuous.
\end{itemize}
Then $h$ is a {\em digital $(\kappa,\kappa')-$homotopy between} $f$ and
$g$, and $f$ and $g$ are {\em digitally $(\kappa,\kappa')-$homotopic in} $Y$, denoted
$f \sim_{\kappa,\kappa'} g$ or $f \sim g$ when
$\kappa$ and $\kappa'$ are understood.
If $(X,\kappa)=(Y,\kappa')$, we say $f$ and $g$ are
{\em $\kappa$-homotopic} to abbreviate
``$(\kappa,\kappa)$-homotopic" and write
$f \sim_{\kappa} g$ to abbreviate
``$f \sim_{\kappa,\kappa} g$". If further
$h(x,t)=x$ for all $t \in [0,m]_{\Z}$, we say $h$
{\em holds $x$ fixed}.

If there exists $x_0 \in X$
such that 
$f(x_0)=g(x_0)=y_0 \in Y$
and $h(x_0,t)=y_0$ for all
$t \in [0,m]_{\Z}$, then
$h$ is a {\em pointed homotopy} and $f$ and $g$
are {\em pointed homotopic}~\cite{Bx99}.

If there exist continuous $f: (X,\kappa) \to (Y,\lambda)$
and $g: (Y,\lambda) \to (X,\kappa)$ such that
$g \circ f \sim_{\kappa,\kappa} \id_X$ and
$f \circ g \sim_{\lambda,\lambda} \id_Y$, then
$(X,\kappa)$ and $(Y,\lambda)$ are {\em homotopy equivalent}.

If there is a $\kappa$-homotopy between
$\id_X$ and a constant map, we say $X$ is {\em $\kappa$-contractible}, or just {\em contractible}
when $\kappa$ is understood.
\end{definition}

\begin{thm}
{\rm \cite{BxNormal}}
\label{prodContinuity}
Let $(X_i,\kappa_i)$ and $(Y_i,\lambda_i)$
be digital images, $1 \le i \le v$. 
Let $f_i: X_i \to Y_i$. Then the product map
$f: \prod_{i=1}^v X_i \to \prod_{i=1}^v Y_i$
defined by 
\[ f(x_1,\ldots,x_v) = (f_1(x_1), \ldots, f_v(x_v))
\]
for $x_i \in X_i$, is
$(NP_v(\kappa_1,\ldots,\kappa_v),NP_v(\lambda_1,\ldots,\lambda_v))$-continuous if and only if each
$f_i$ is $(\kappa_i,\lambda_i)$-continuous.
\end{thm}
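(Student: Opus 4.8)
The plan is to reduce everything to the pointwise characterization of digital continuity stated above: a map between digital images is continuous precisely when it sends adjacent points to points that are adjacent or equal. Throughout, write $X=\prod_{i=1}^v X_i$, $Y=\prod_{i=1}^v Y_i$, $\kappa = NP_v(\kappa_1,\ldots,\kappa_v)$, and $\lambda = NP_v(\lambda_1,\ldots,\lambda_v)$, so that the claim becomes: $f$ is $(\kappa,\lambda)$-continuous if and only if every $f_i$ is $(\kappa_i,\lambda_i)$-continuous.

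For the forward implication, I would assume $f$ is $(\kappa,\lambda)$-continuous, fix an index $i_0$, and take a $\kappa_{i_0}$-adjacent pair $a \adj_{\kappa_{i_0}} b$ in $X_{i_0}$. Choosing any basepoint $c_j \in X_j$ for each $j \ne i_0$, form $p,q \in X$ that both have $j$-th coordinate $c_j$ for $j\ne i_0$ and have $i_0$-th coordinates $a$ and $b$ respectively. Exactly one coordinate of $p,q$ differs, and there they are $\kappa_{i_0}$-adjacent, so $p \adj_{\kappa} q$ by definition of $NP_v$ (the index count $1$ lies between $1$ and $v$). Continuity gives $f(p)\adjeq_{\lambda} f(q)$. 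Since $f(p)$ and $f(q)$ agree in every coordinate except possibly $i_0$, where they are $f_{i_0}(a)$ and $f_{i_0}(b)$, the case $f(p)=f(q)$ forces $f_{i_0}(a)=f_{i_0}(b)$, while the case $f(p)\adj_\lambda f(q)$ forces (again by the $NP_v$ definition, $i_0$ being the only coordinate in which they can differ) $f_{i_0}(a)\adj_{\lambda_{i_0}} f_{i_0}(b)$. Either way $f_{i_0}(a)\adjeq_{\lambda_{i_0}} f_{i_0}(b)$, so $f_{i_0}$ is continuous, and $i_0$ was arbitrary.

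For the converse, assume every $f_i$ is continuous and let $p=(x_1,\ldots,x_v)\adj_{\kappa} q=(x_1',\ldots,x_v')$. Put $A=\{i : x_i \adj_{\kappa_i} x_i'\}$; by the definition of $NP_v$ we have $1\le \#A\le v$ and $x_j=x_j'$ for all $j\notin A$. Continuity of $f_i$ gives $f_i(x_i)\adjeq_{\lambda_i} f_i(x_i')$ for $i\in A$, and of course $f_j(x_j)=f_j(x_j')$ for $j\notin A$. Now set $B=\{i\in A : f_i(x_i)\ne f_i(x_i')\}$; then $f_i(x_i)\adj_{\lambda_i} f_i(x_i')$ for each $i\in B$, and $f(p)$ and $f(q)$ agree in every coordinate outside $B$. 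If $B=\emptyset$ then $f(p)=f(q)$; otherwise $1\le \#B\le \#A\le v$ and $f(p)\adj_{\lambda} f(q)$. In both cases $f(p)\adjeq_{\lambda} f(q)$, so $f$ is $(\kappa,\lambda)$-continuous.

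The argument is essentially bookkeeping, and the only genuine subtlety — the reason to carry the auxiliary sets $A$ and $B$ rather than arguing coordinatewise in one line — is that a continuous $f_i$ may collapse a $\kappa_i$-adjacent pair to a single point, so the set of coordinates in which $f(p)$ and $f(q)$ truly differ can be a proper (possibly empty) subset of those in which $p$ and $q$ differ; one must check the image pair still meets the ``at least one, at most $v$'' clause of the $NP_v$ definition, which it does because passing from $A$ to $B\subseteq A$ either keeps a nonempty set nonempty or empties it entirely, the latter giving equality. A minor hypothesis used only in the forward direction is that each $X_j$ is nonempty, so that the basepoints $c_j$ exist; this holds for the digital images considered here.
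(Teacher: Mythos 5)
This theorem is imported from \cite{BxNormal}; the paper states it without proof, so there is no in-paper argument to compare against. Judged on its own, your proof is correct and complete. Both directions rest on the right tool, namely the pointwise characterization of digital continuity ($x \adj_{\kappa} y$ implies $f(x) \adjeq_{\lambda} f(y)$), and you handle the two genuine subtleties properly: in the forward direction, embedding a $\kappa_{i_0}$-adjacent pair into the product via basepoints (which requires each $X_j$ nonempty, as you note) and reading off $f_{i_0}(a) \adjeq_{\lambda_{i_0}} f_{i_0}(b)$ from the fact that the images can differ only in coordinate $i_0$; in the converse, the passage from the index set $A$ of coordinates where $p$ and $q$ are adjacent to the possibly smaller set $B$ where the images actually differ, which is needed because a continuous $f_i$ may collapse an adjacent pair, and the observation that $B = \emptyset$ yields equality while $1 \le \#B \le v$ yields $NP_v$-adjacency. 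This is the standard argument for such product-adjacency statements, and nothing is missing.
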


\begin{definition}
Let $A \subset X$. A $\kappa$-continuous
function $r: X \to A$ is a {\em retraction}, and
{\em $A$ is a retract of $X$}, if $r(a)=a$ for
all $a \in A$. If such a map $r$ satisfies
$i \circ r \sim_{\kappa} \id_X$ where $i: A \to X$ is the
inclusion map, then $A$ is a {\em $\kappa$-deformation retract} of $X$.
\end{definition}

A function $f: (X,\kappa) \to (Y,\lambda)$ is
an {\em isomorphism} (called a {\em homeomorphism}
in~\cite{Bx94}) if $f$ is a continuous bijection
such that $f^{-1}$ is continuous.

We use the following notation. For a
digital image $(X,\kappa)$,
\[ C(X,\kappa) = \{f: X \to X \, | \,
   f \mbox{ is continuous}\}.
\]

Given $f \in C(X,\kappa)$, a point
$x \in X$ is a {\em fixed point of $f$} if
$f(x)=x$. We denote by $\Fix(f)$ the set
$\{x \in X \, | \, x 
   \mbox{ is a fixed point of } f \}$.
If $x \in X \setminus \Fix(f)$, we say $f$ {\em moves} $x$.

\section{Rigidity and reducibility}
A function $f:(X,\kappa) \to (Y,\lambda)$
is \emph{rigid}~\cite{bs19a} when no 
continuous map is homotopic to $f$ 
except $f$ itself.
When the identity map 
$\id: X \to X$ 
is rigid, we say $X$ is rigid~\cite{hmps}.
If $f: X \to Y$ with 
$f(x_0)=y_0$, then
$f$ is {\em pointed rigid}~\cite{hmps} if
no continuous map is
pointed homotopic to $f$ other
than $f$ itself. When the 
identity map 
$\id: (X,x_0) \to (X,x_0)$ is 
pointed rigid, we say $(X,x_0)$ is 
pointed rigid.

Rigid maps and digital images are
discussed in~\cite{hmps,bs19a}.

Clearly, a rigid map is pointed rigid, and a rigid digital image is pointed rigid.
(Note these assertions may seem counterintuitive as, e.g., pointed homotopic
functions are homotopic, but the converse is not always true.) We show in the following that the
converses of these assertions 
are not generally true.

\begin{figure}
\includegraphics[height=2in]{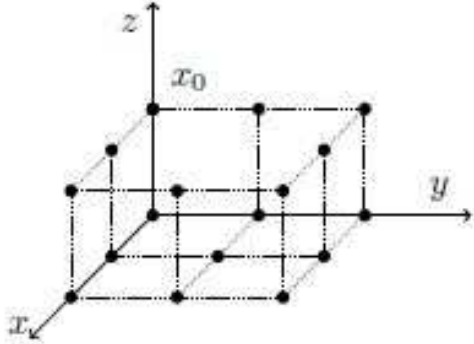}
\caption{(Figure~1 of~\cite{bs16}.) The image $X$ discussed in 
Example~\ref{contract-not-ptd}. The coordinates are ordered according to the axes in this figure.
}
\label{contract-not-ptd-fig}
\end{figure}
\begin{exl}
{\rm \cite{bs16}}
\label{contract-not-ptd}
Let $X= ([0,2]_{\Z}^2 \times [0,1]_{\Z}) \setminus \{(1,1,1)\}$. Let 
$x_0 = (0,0,1) \in X$.
See Figure~\ref{contract-not-ptd-fig}.
It was shown in~\cite{bs16} that
$X$ is 6-contractible (i.e.,
$c_1$-contractible) but 
$(X,x_0)$ is not
pointed 6-contractible. The
proof of the latter uses
an argument that is easily 
modified to show that any homotopy
of $\id_X$ that moves some point
must move $x_0$. It
follows that $\id_X$ is not
rigid but is $x_0$-pointed rigid, i.e., that
$X$ is not $c_1$-rigid but 
$(X,x_0)$ is $c_1$-pointed rigid.
\end{exl}

\begin{definition}
{\rm \cite{hmps}}
A finite image $X$ is {\em reducible} if it is
homotopy equivalent to an image of fewer points. Otherwise,
we say $X$ is {\em irreducible}.
\end{definition}

\begin{lem}
{\rm \cite{hmps}}
\label{htpNonsurj}
A finite image $X$ is reducible if and 
only if $\id_X$ is homotopic to a nonsurjective map.
\end{lem}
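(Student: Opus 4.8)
The plan is to prove the two directions separately. The direction ``$X$ reducible $\Rightarrow\id_X$ is homotopic to a nonsurjective map'' is immediate: if $X$ is homotopy equivalent to an image $Y$ with $\#Y<\#X$, via continuous $f\colon X\to Y$ and $g\colon Y\to X$ with $g\circ f\sim_\kappa\id_X$, then $r:=g\circ f$ is continuous (Theorem~\ref{composition}), is homotopic to $\id_X$, and satisfies $r(X)\subseteq g(Y)$, so $\#r(X)\le\#Y<\#X$; hence $r$ is not surjective.

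For the converse I would start from a continuous $r\colon X\to X$ with $r\sim_\kappa\id_X$ that is not surjective, and build out of it an image with fewer points that is homotopy equivalent to $X$. Two routine facts are needed. First, homotopy of continuous self-maps is an equivalence relation compatible with composition --- post- or pre-composing a homotopy with a fixed continuous map again gives a homotopy --- so $r\sim_\kappa\id_X$ forces $r^{k}\sim_\kappa\id_X$ for all $k\ge 1$. Second, a continuous bijection of a finite digital image onto itself is an isomorphism: such a map sends edges to edges injectively, hence (finiteness) bijectively onto the edge set, so its inverse is continuous. Now, since $X$ is finite the nested chain $X\supseteq r(X)\supseteq r^{2}(X)\supseteq\cdots$ stabilizes; writing $A:=r^{n}(X)$ for $n=\#X$ we get $r^{n+1}(X)=A$, hence $r(A)=A$, so $\sigma:=r|_{A}\colon A\to A$ is a continuous bijection and therefore an isomorphism, and $\tau:=\sigma^{\,n}=r^{n}|_{A}$ is an isomorphism of $A$ as well. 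Let $\rho:=r^{n}$, regarded as a map $X\to A$ (legitimate since $r^{n}(X)=A$), and let $\iota\colon A\hookrightarrow X$ be the inclusion; then $\iota\circ\rho=r^{n}\sim_\kappa\id_X$ and $\rho\circ\iota=\tau$.

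The final step is to correct $\rho$ by $\tau^{-1}$. Put $\bar\rho:=\tau^{-1}\circ\rho\colon X\to A$, which is continuous. Then $\bar\rho\circ\iota=\tau^{-1}\circ\tau=\id_A$, so $\bar\rho$ is a retraction of $X$ onto $A$. Moreover, applying $\rho$ to a homotopy witnessing $\iota\circ\rho\sim_\kappa\id_X$ yields $\rho\circ\iota\circ\rho\sim\rho$, i.e.\ $\tau\circ\rho\sim\rho$; composing this on the left with $\iota\circ\tau^{-1}$ gives $\iota\circ\rho\sim\iota\circ\bar\rho$, and transitivity with $\iota\circ\rho\sim_\kappa\id_X$ gives $\iota\circ\bar\rho\sim_\kappa\id_X$. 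Thus $A$ is a deformation retract of $X$, and taking $f=\bar\rho$, $g=\iota$ exhibits a homotopy equivalence between $X$ and $A$, since $g\circ f=\iota\circ\bar\rho\sim_\kappa\id_X$ and $f\circ g=\bar\rho\circ\iota=\id_A$. Finally $A\subseteq r(X)\subsetneq X$ with $X$ finite forces $\#A<\#X$, so $X$ is reducible.

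I expect the main difficulty to be exactly what the third paragraph handles: the obvious candidate --- take $Y=r(X)$ with the corestriction of $r$ and the inclusion --- does not work, because $r$ need not fix its own (eventual) image pointwise; it only acts there as an isomorphism. Passing to the stabilized image $A=r^{\#X}(X)$ and composing with $\tau^{-1}$ is what turns $\bar\rho$ into a genuine retraction whose composite with $\iota$ is still homotopic to $\id_X$.
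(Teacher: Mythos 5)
The paper offers no proof of this lemma---it is quoted from~\cite{hmps}---so there is no internal argument to compare yours against; I can only assess your proposal on its own terms, and it is correct and complete. The easy direction is handled as one would expect. For the converse, you correctly identify the trap (taking $Y=r(X)$ with the corestriction of $r$ fails because $r$ need not restrict to the identity, or even to a retraction, on its own image) and you supply the right fix: iterate $r$ until the image stabilizes at $A=r^{n}(X)$ with $n=\#X$, observe that $r|_A$ is a continuous bijection of a finite image and hence an isomorphism, and then correct $\rho=r^n$ by $\tau^{-1}=(r^n|_A)^{-1}$ so that $\bar\rho=\tau^{-1}\circ\rho$ is a genuine retraction with $\iota\circ\bar\rho\sim_\kappa\id_X$. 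The two supporting facts you invoke---that digital homotopy is preserved by pre- and post-composition with continuous maps (so $r^k\sim_\kappa\id_X$ for all $k$), and that a continuous self-bijection of a finite digital image is an isomorphism because it injects, hence bijects, the finite edge set---are both true and both proved correctly in your sketch. The bookkeeping at the end ($\bar\rho\circ\iota=\id_A$, $\iota\circ\bar\rho\sim_\kappa\iota\circ\rho\sim_\kappa\id_X$, and $\#A\le\#r(X)<\#X$) checks out, and in fact yields the slightly stronger conclusion that $A$ is a deformation retract of $X$. This is essentially the standard argument for this result, so nothing further is needed.
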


Let $(X,\kappa)$ be reducible. By 
Lemma~\ref{htpNonsurj}, there exist $x\in X$ and
$f \in C(X,\kappa)$ such that $\id_X \simeq_{\kappa} f$
and $x \not \in f(X)$. We will call such a point
a {\em reduction point}.

In Lemma~\ref{reducingPt} below, we have changed the
notation of~\cite{hmps}, since the latter paper uses
the notation ``$N(x)$" for what we call ``$N^*(x)$" or
``$N_{\kappa}^*(x)$".

\begin{lem}
{\rm \cite{hmps}}
\label{reducingPt}
If there exist distinct $x, y \in X$ so 
that $N^*(x) \subset N^*(y)$, then $X$ is 
reducible. In particular, $x$ is a reduction point
of $X$, and $X \setminus \{x\}$ is 
a deformation retract of $X$.
\end{lem}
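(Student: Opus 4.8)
The plan is to exhibit an explicit deformation retraction of $X$ onto $X \setminus \{x\}$ and then read off both conclusions from the definitions. Define $r : X \to X \setminus \{x\}$ by $r(x) = y$ and $r(z) = z$ for $z \neq x$; this is meaningful since $x \neq y$. The one observation that needs a moment's thought is that the hypothesis forces $x \adj_{\kappa} y$: since $x \in N^*_{\kappa}(x) \subset N^*_{\kappa}(y)$ and $x \neq y$, we get $x \in N_{\kappa}(y)$, i.e. $x \adj_{\kappa} y$. Everything after this is routine verification against the adjacency characterization of continuity and the homotopy definition.

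First I would check that $r$ is $\kappa$-continuous using the adjacency criterion. Suppose $a \adj_{\kappa} b$, so in particular $a \neq b$. If neither $a$ nor $b$ equals $x$, then $r(a) = a \adj_{\kappa} b = r(b)$. If $a = x$ (so $b \neq x$), then $b \in N_{\kappa}(x) \subset N^*_{\kappa}(y)$, hence $r(a) = y \adjeq_{\kappa} b = r(b)$; the case $b = x$ is symmetric. Since $r$ fixes every point of $X \setminus \{x\}$, it is a retraction of $X$ onto $X \setminus \{x\}$. Next I would build the one-step homotopy $h : X \times [0,1]_{\Z} \to X$ by $h(z,0) = z$ and $h(z,1) = r(z)$. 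The slice $h_0 = \id_X$ is continuous, and the slice $h_1 = i \circ r$ is continuous by the previous paragraph together with Theorem~\ref{composition}, where $i : X \setminus \{x\} \to X$ is the inclusion. For each fixed $z$, the path $h_z$ has image $\{z, r(z)\}$, a single point unless $z = x$, in which case it is $\{x,y\}$, which is $\kappa$-connected because $x \adj_{\kappa} y$; hence each $h_z$ is $(2,\kappa)$-continuous. Thus $h$ is a $\kappa$-homotopy, so $i \circ r \sim_{\kappa} \id_X$ and $X \setminus \{x\}$ is a $\kappa$-deformation retract of $X$.

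Finally, reducibility and the reduction-point claim follow immediately. The maps $r$ and $i$ satisfy $i \circ r \sim_{\kappa} \id_X$ and $r \circ i = \id_{X \setminus \{x\}}$, so $(X,\kappa)$ is homotopy equivalent to $(X \setminus \{x\}, \kappa)$, an image with fewer points; hence $X$ is reducible by definition (equivalently, by Lemma~\ref{htpNonsurj}, since $i \circ r$ is nonsurjective). Moreover $\id_X \sim_{\kappa} i \circ r$ and $x \notin (i \circ r)(X) = X \setminus \{x\}$, so $x$ is a reduction point of $X$, and we have already shown $X \setminus \{x\}$ is a deformation retract of $X$. The main obstacle is essentially just the first observation — extracting $x \adj_{\kappa} y$ from the containment $N^*(x) \subset N^*(y)$ — after which the continuity and homotopy checks are short case analyses.
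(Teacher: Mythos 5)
Your proof is correct. The paper itself states this lemma without proof, citing \cite{hmps}; your argument --- extract $x \adj_{\kappa} y$ from $x \in N^*(x) \subset N^*(y)$, define the retraction sending $x$ to $y$ and fixing everything else, verify continuity by the adjacency criterion, and assemble the one-step homotopy $i \circ r \sim_{\kappa} \id_X$ --- is exactly the standard ``fold onto the dominating vertex'' argument used in \cite{hmps}, and it correctly yields all three conclusions (reducibility, $x$ a reduction point, and $X \setminus \{x\}$ a deformation retract).
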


\begin{remark}
{\rm \cite{hmps}}
A finite rigid image is irreducible.
\end{remark}

\begin{thm}
\label{1-2-3-reducible}
Let $(X,c_2)$ be a digital image 
in $\Z^2$. Suppose there exists $x_0 \in X$
such that $N_{c_2}(x_0)$ is $c_2$-connected
and $\#N_{c_2}(x_0) \in \{1,2,3\}$. 
Then $(X,c_2)$ is reducible.
\end{thm}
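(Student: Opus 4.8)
The plan is to deduce the result from Lemma~\ref{reducingPt}: it is enough to find distinct points $x,y \in X$ with $N^*(x) \subset N^*(y)$, and I will take $x = x_0$, seeking $y$ inside $N_{c_2}(x_0)$. The point of this choice is that $N^*(x_0) = \{x_0\} \cup N_{c_2}(x_0)$, and for any $y \in N_{c_2}(x_0)$ we automatically have $x_0 \adj_{c_2} y$ and $y \adjeq_{c_2} y$, so $\{x_0, y\} \subset N^*(y)$. Hence it suffices to locate a $y \in N_{c_2}(x_0)$ that is $c_2$-adjacent to every other member of $N_{c_2}(x_0)$; equivalently, a vertex that dominates the induced subgraph $(N_{c_2}(x_0), c_2)$. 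For such a $y$ we get $N^*(x_0) \subset N^*(y)$ with $x_0 \neq y$, and Lemma~\ref{reducingPt} immediately gives that $(X, c_2)$ is reducible (and in fact that $x_0$ is a reduction point and $X \setminus \{x_0\}$ is a deformation retract of $X$).

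The remaining step is to produce such a dominating vertex from the hypotheses. Write $k = \#N_{c_2}(x_0) \in \{1,2,3\}$; by assumption the induced subgraph on $N_{c_2}(x_0)$ is $c_2$-connected. I would argue by cases: if $k=1$, the unique neighbor trivially dominates; if $k=2$, connectedness forces the two neighbors to be $c_2$-adjacent, so either one dominates; and if $k=3$, the only connected graphs on three vertices are the path on three vertices and the triangle, each of which has a vertex adjacent to the other two (the central vertex of the path, or any vertex of the triangle). In every case a dominating $y \in N_{c_2}(x_0)$ exists, completing the argument.

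I do not anticipate a serious obstacle: the only genuine content is the elementary graph-theoretic fact used when $k=3$, that every connected graph on at most three vertices has a dominating vertex. This is also exactly where the bound $\#N_{c_2}(x_0) \le 3$ is used---already with four neighbors one can get an induced $4$-cycle (for instance, the four $c_1$-neighbors of a point of $\Z^2$ form a $4$-cycle under $c_2$), which has no dominating vertex, so the method breaks down there. The one thing to be careful about is to phrase $c_2$-connectedness of $N_{c_2}(x_0)$ as connectedness of the inherited digital image and to dispose of the degenerate case $k=1$ cleanly; I note that the proof uses nothing specific to $\Z^2$ or to $c_2$, so the statement in fact holds verbatim for an arbitrary adjacency $\kappa$.
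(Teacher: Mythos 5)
Your proposal is correct and follows essentially the same route as the paper: both reduce to finding $y \in N_{c_2}(x_0)$ with $N_{c_2}^*(x_0) \subset N_{c_2}^*(y)$ via the same three-way case analysis on $\#N_{c_2}(x_0)$, and then invoke Lemma~\ref{reducingPt}. Your closing observation that the argument uses nothing specific to $\Z^2$ or $c_2$ is a fair (and accurate) bonus remark, but it does not change the substance.
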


\begin{proof}
We first show that in all cases, there exists $y \in N_{c_2}(x_0)$ such that
$N_{c_2}^*(x_0) \subset N_{c_2}^*(y)$.
\begin{enumerate}
    \item Suppose $\#N_{c_2}(x_0) = 1$. Then there exists $y \in X$ such that
          $\{y\} = N_{c_2}(x_0)$. Clearly, then, $N_{c_2}^*(x_0) \subset N_{c_2}^*(y)$.
    \item Suppose $\#N_{c_2}(x_0) = 2$. Then there exist distinct $y, y' \in X$ such that
          $\{y,y'\} = N_{c_2}(x_0)$, which by hypothesis is connected. Therefore,
          $\{x_0,y'\} \subset N_{c_2}(y)$, so $N_{c_2}^*(x_0) \subset N_{c_2}^*(y)$.
    \item Suppose $\#N_{c_2}(x_0) = 3$. Then there exist distinct $y, y_0, y_1 \in X$ such that
          $\{y,y_0, y_1\} = N_{c_2}(x_0)$, which by hypothesis is connected. Therefore, one of
          the members of $N_{c_2}(x_0)$, say, $y$, is adjacent to the other two. Thus,
          $\{x_0,y_0, y_1\} \subset N_{c_2}(y)$, so $N_{c_2}^*(x_0) \subset N_{c_2}^*(y)$.
\end{enumerate}
Since in all cases we have $N_{c_2}^*(x_0) \subset N_{c_2}^*(y)$, the assertion 
follows from Lemma~\ref{reducingPt}.
\end{proof}

\begin{remark}
If instead we use the $c_1$-adjacency, the analog of the previous theorem
is simpler, since if $(X,c_1)$ is a digital image in $\Z^2$ and $x_0 \in X$
such that $N_{c_1}(x_0)$ is nonempty and $c_1$-connected, then
$\#N_{c_1}(x_0)=1$. This case is similar to the case $\#N_{c_2}(x_0)=1$ of
Theorem~\ref{1-2-3-reducible} above, so $(X,c_1)$ is reducible.
\end{remark}

\section{Pointed homotopy fixed point spectrum}
In this section, we define
pointed versions of the
{\em homotopy fixed point spectrum} of $f \in C(X,\kappa)$
and the {\em fixed point spectrum} of a digital image
$(X,\kappa)$.

\begin{definition}
Let $(X,\kappa)$ be a digital image.
\begin{itemize}
    \item {\rm ~\cite{bs19a}}
    Given $f \in C(X,\kappa)$,
the {\em homotopy fixed point spectrum of} $f$ is
\[ S(f) = \{\#\Fix(g) \, | \,
   g \sim_{\kappa} f\}.
\]
\item Given $f \in C(X,\kappa)$
      and $x_0 \in \Fix(f)$,
      the {\em pointed homotopy fixed point spectrum of} $f$ is
\[ S(f,x_0) = \{\#\Fix(g) \, | \,
   g \sim_{\kappa} f \mbox{  holding $x_0$ fixed}\}.
\]
\end{itemize}
\end{definition}

\begin{definition}
Let $(X,\kappa)$ be a digital image.
\begin{itemize}
    \item {\rm \cite{bs19a}} The {\em fixed point spectrum} of $(X,\kappa)$ is
\[ F(X) = F(X,\kappa) =
   \{\#\Fix(f) \, | \, f \in C(X,\kappa)\}.
\]
\item Given $x_0 \in X$, the
{\em pointed fixed point spectrum} of $(X,\kappa,x_0)$ is
\[ F(X,x_0) = F(X,\kappa,x_0) =
   \{\#\Fix(f) \, | \, f \in C(X,\kappa), x_0 \in \Fix(f)\}.
\]
\end{itemize}
\end{definition}

\begin{thm}
{\rm \cite{bs19a}}
\label{retractF}
Let $A$ be a retract of $(X,\kappa)$. Then
$F(A) \subseteq F(X)$.
\end{thm}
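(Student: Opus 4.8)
The plan is to start from the defining retraction $r : (X,\kappa) \to (A,\kappa)$ with $r(a) = a$ for all $a \in A$, and to show that every element of $F(A)$ arises as $\#\Fix(g)$ for some $g \in C(X,\kappa)$. So let $n \in F(A)$; by definition of $F(A) = F(A,\kappa)$ there is a continuous self-map $f \in C(A,\kappa)$ with $\#\Fix(f) = n$. The natural candidate to realize $n$ inside $F(X)$ is the composite $g = i \circ f \circ r : X \to X$, where $i : A \to X$ is the inclusion. By Theorem~\ref{composition}, $g$ is $\kappa$-continuous (the inclusion $i$ is trivially continuous, being the restriction of $\id$), so $g \in C(X,\kappa)$, and it remains only to compute $\Fix(g)$.

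The key step is to verify $\Fix(g) = \Fix(f)$, so that $\#\Fix(g) = n$ and hence $n \in F(X)$. First, since $r$ restricts to the identity on $A$, for any $a \in A$ we have $g(a) = i(f(r(a))) = f(a)$; in particular if $a \in \Fix(f)$ then $g(a) = a$, so $\Fix(f) \subseteq \Fix(g)$. Conversely, suppose $x \in \Fix(g)$, i.e.\ $x = g(x) = i(f(r(x)))$. The right-hand side $f(r(x))$ is a point of $A$ (it lies in the range of $f$, which is a subset of $A$), so $x \in A$; but then $r(x) = x$, and the equation becomes $x = f(x)$, so $x \in \Fix(f)$. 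This gives $\Fix(g) \subseteq \Fix(f)$, and combining the two inclusions yields $\Fix(g) = \Fix(f)$. Therefore $\#\Fix(g) = \#\Fix(f) = n$, so $n \in F(X)$, and since $n$ was an arbitrary element of $F(A)$ we conclude $F(A) \subseteq F(X)$.

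I do not anticipate a serious obstacle here: the argument is a short composition-and-fixed-point bookkeeping exercise, and the only point requiring a moment's care is the observation that a fixed point of $i \circ f \circ r$ is automatically forced to lie in $A$ (because the image of $f$ lies in $A$), which is what lets the retraction property $r|_A = \id_A$ collapse $g$ to $f$ on the relevant set. If one wanted an even cleaner phrasing, one could note that $g$ and $f$ agree on $A$ and that $g(X) \subseteq A$, from which $\Fix(g) \subseteq g(X) \subseteq A$ and then $g|_A = f$ immediately give the equality of fixed point sets.
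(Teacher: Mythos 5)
Your proof is correct and is exactly the standard retraction argument: given $f \in C(A,\kappa)$, the composite $i \circ f \circ r$ is continuous with the same fixed point set as $f$, since any fixed point of $i\circ f\circ r$ must lie in the image of $f$ and hence in $A$, where $r$ acts as the identity. The paper itself only cites this result from~\cite{bs19a} without reproducing the proof, but your argument is the one used there and is the same one the paper alludes to when it says the argument is ``easily modified'' to give the pointed version (Theorem~\ref{retractFptd}).
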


The argument used to prove
Theorem~\ref{retractF} is easily
modified to yield the following.

\begin{thm}
\label{retractFptd}
Let $(A,\kappa, x_0)$ be a retract of $(X,\kappa, x_0)$. Then
$F(A,\kappa, x_0) \subseteq F(X,\kappa, x_0)$.
\end{thm}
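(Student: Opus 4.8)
The plan is to imitate the construction behind Theorem~\ref{retractF}, carrying the base point along. Let $r\colon X \to A$ be the $\kappa$-continuous retraction witnessing that $(A,\kappa,x_0)$ is a retract of $(X,\kappa,x_0)$, and let $i\colon A \to X$ be the inclusion map; since $x_0 \in A$, automatically $r(x_0)=x_0$. Given $n \in F(A,\kappa,x_0)$, choose $f \in C(A,\kappa)$ with $x_0 \in \Fix(f)$ and $\#\Fix(f)=n$. Define $g = i \circ f \circ r \colon X \to X$.

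First I would check that $g \in C(X,\kappa)$. This is immediate from Theorem~\ref{composition}, since $r$, $f$, and $i$ are each $\kappa$-continuous (the inclusion of a subset carrying the restricted adjacency is continuous, as $\kappa$-adjacent points of $A$ are $\kappa$-adjacent in $X$).

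The substantive step is the identity $\Fix(g)=\Fix(f)$, where $A$ is regarded as a subset of $X$. If $a \in \Fix(f)$, then $a \in A$, so $r(a)=a$ and hence $g(a)=i(f(a))=f(a)=a$, giving $\Fix(f)\subseteq\Fix(g)$. Conversely, if $x \in \Fix(g)$, then $x=g(x)=i(f(r(x)))$, and the right-hand side lies in $A$ (it is a value of $i$), so $x \in A$; then $r(x)=x$, whence $x=g(x)=f(x)$, i.e.\ $x \in \Fix(f)$. Therefore $\#\Fix(g)=\#\Fix(f)=n$, and since $x_0 \in \Fix(f)\subseteq\Fix(g)$ we have $x_0 \in \Fix(g)$, so $n \in F(X,\kappa,x_0)$. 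As $n$ was an arbitrary element of $F(A,\kappa,x_0)$, the containment follows.

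I do not expect a genuine obstacle here; the one point needing care is verifying that every fixed point of $g$ actually lies in $A$, so that $r$ restricts to the identity there and $\Fix(g)$ coincides with $\Fix(f)$ rather than merely containing it. Without that observation the base-point and cardinality bookkeeping would not close.
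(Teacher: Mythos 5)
Your proof is correct and is exactly the argument the paper intends: it gives no explicit proof of Theorem~\ref{retractFptd}, saying only that the proof of Theorem~\ref{retractF} (the construction $g = i \circ f \circ r$ with $\Fix(g)=\Fix(f)$) is easily modified, and your base-point bookkeeping via $r(x_0)=x_0$ is precisely that modification. The observation that every fixed point of $g$ lands in $A$ is the right point to make explicit.
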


\begin{thm}
{\rm \cite{bs19a}}
\label{rectangleSpectrum}
Let $X = [1,a]_{\Z} \times [1,b]_{\Z}$. Let $\kappa \in \{c_1,c_2\}$. Then
\[ S(\id_X,\kappa) = F(X,\kappa) = \{i\}_{i=0}^{ab}.
\]
\end{thm}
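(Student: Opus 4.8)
The plan is to establish the chain of inclusions $\{0,1,\ldots,ab\} \subseteq S(\id_X,\kappa) \subseteq F(X,\kappa) \subseteq \{0,1,\ldots,ab\}$. The last two inclusions are immediate: any $g \sim_{\kappa} \id_X$ is $\kappa$-continuous, so $\#\Fix(g) \in F(X,\kappa)$; and every $f \in C(X,\kappa)$ satisfies $0 \le \#\Fix(f) \le \#X = ab$. Thus the substance of the argument is to realize each integer $m$ with $0 \le m \le ab$ as $\#\Fix(g)$ for some $g \sim_{\kappa} \id_X$. We may assume $ab \ge 2$, the one-point case being degenerate.

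The realizing maps I would build out of ``clamp'' retractions. For $1 \le c \le a$ put $\pi_c(x,y) = (\min(x,c),y)$; this retracts $X$ onto the subrectangle $[1,c]_{\Z} \times [1,b]_{\Z}$, has $\#\Fix(\pi_c) = cb$, and is $\kappa$-homotopic to $\id_X$ via $h(x,y,t) = (\min(x,a-t),y)$, $t \in [0,a-c]_{\Z}$ (each level map is again a clamp, hence $c_1$- and $c_2$-continuous, and each track is a path). Similarly, on a single column $\{1\} \times [1,b]_{\Z}$ the clamp $\lambda_{\ell}(1,y) = (1,\min(y,\ell))$ retracts onto $\ell$ points and is homotopic to the identity of that column. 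The values not of the form $cb$ require one more gadget: given the subrectangle $R = [1,c]_{\Z} \times [1,b]_{\Z}$ with $c \ge 2$ and given $1 \le k \le b-1$, I would produce a $\kappa$-continuous $r_k : R \to R$ fixing every point of $R$ except the top $k$ points $(c,b), (c,b-1), \ldots, (c,b-k+1)$ of its last column, each of which it pushes one step into column $c-1$: for $\kappa = c_2$ set $r_k(c,y) = (c-1,y)$ when $y > b-k$, while for $\kappa = c_1$ set $r_k(c,y) = (c-1,y-1)$ when $y > b-k$. (For $c_1$ the diagonal shift is essentially forced, as no vertex of a rectangle satisfies the neighborhood-containment hypothesis of Lemma~\ref{reducingPt}.) One checks that $r_k$ is $\kappa$-continuous, that $\#\Fix(r_k) = cb - k$, and that $r_k \sim_{\kappa} \id_R$: for $c_2$ by a one-step homotopy moving only the $k$ named points, and for $c_1$ by a two-step homotopy that first clamps column $c$ onto column $c-1$ and then restores all of column $c$ except its top $k$ points.

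Assembling these, and writing $i$ and $i_R$ for the relevant inclusions, the maps $\pi_c$, $i_R \circ r_k \circ \pi_c$, and $i \circ \lambda_{\ell} \circ \pi_1$ are each $\kappa$-homotopic to $\id_X$ --- using the standard facts that $\sim_{\kappa}$ is an equivalence relation and is preserved under pre- and post-composition with $\kappa$-continuous maps --- and their fixed-point counts are respectively $cb$ (for $1 \le c \le a$), $cb - k$ (for $2 \le c \le a$, $1 \le k \le b-1$), and $\ell$ (for $1 \le \ell \le b$). These numbers run exactly over $\{1,2,\ldots,ab\}$. Finally, $m = 0$ is obtained from the constant map $i \circ \lambda_1 \circ \pi_1 \sim_{\kappa} \id_X$ by moving its unique fixed point $p$ to a $\kappa$-neighbor $q \ne p$; a one-step homotopy whose only nonconstant track is that of $p$ shows the resulting fixed-point-free map is still $\sim_{\kappa} \id_X$, so $0 \in S(\id_X,\kappa)$. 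This would complete the proof. (When $\kappa = c_2$ one can dispense with $r_k$ entirely: repeated use of Lemma~\ref{reducingPt} peels $X$ down one vertex at a time to a single point, exhibiting deformation retracts of $X$ of every size $1,2,\ldots,ab$.)

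The step I expect to be the main obstacle is the $c_1$ case of the middle paragraph --- confirming that the diagonal ``push'' map $r_k$ is $c_1$-continuous and writing out the two-step homotopy that carries it to $\id_R$ --- because, unlike for $c_2$, there is no single vertex of the rectangle that one may simply delete. The remaining verifications are routine adjacency bookkeeping.
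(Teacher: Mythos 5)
This theorem is quoted from~\cite{bs19a} and the present paper gives no proof of it, so there is nothing internal to compare against; judged on its own, your argument is correct and essentially self-contained. The reduction to showing $\{0,\ldots,ab\}\subseteq S(\id_X,\kappa)$ is right, the clamp maps $\pi_c$ and $\lambda_\ell$ and their level-by-level homotopies to the identity are valid under the paper's Definition~\ref{htpy-2nd-def} (each level map is a clamp, each track moves by at most one per step), and you have correctly isolated the only delicate point: under $c_1$ no single vertex of a rectangle can be deleted (Lemma~\ref{reducingPt} never applies), so the intermediate counts $cb-k$ must be realized by moving a whole block. Your diagonal push $r_k(c,y)=(c-1,y-1)$ for $y>b-k$ checks out as $c_1$-continuous (the boundary cases $(c,b-k)$ versus $(c,b-k+1)$ and $(c-1,y)$ versus $(c,y)$ all land on $c_1$-neighbors), the two-step homotopy through the column clamp is legitimate, and the fixed-point counts $cb$, $cb-k$, $\ell$, together with the perturbed constant map for $0$, exhaust $\{0,\ldots,ab\}$. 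Two small caveats: you lean on the facts that $\sim_\kappa$ is an equivalence relation preserved by pre- and post-composition, which are standard (they go back to~\cite{Bx99} and~\cite{Khalimsky}) but are not stated in this paper, so a fully written-out version should cite or prove them; and your exclusion of the one-point case is actually necessary, since for $a=b=1$ the only self-map is $\id_X$ and the stated equality fails ($0\notin F(X)$), a hypothesis the source statement leaves implicit.
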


\begin{exl}
Consider the pointed digital image
$(X,c_1,x_0)$ of Example~\ref{contract-not-ptd}.
Since $f \in C(X,c_1)$ and
$x_0 \in \Fix(f)$ imply
$f = \id_X$, 
\[ S(\id_X,c_1,x_0)=\{\#X\}=\{17\}.
\]
However, $(X,c_1)$ is not rigid. It is easily seen that
there is a $c_1$-deformation retraction of $X$ to
$\{(x,y,0) \in X\}$, which is isomorphic to $[1,3]_{\Z}^2$.
It follows from Theorem~\ref{retractF} and
Theorem~\ref{rectangleSpectrum}
that $\{i\}_{i=0}^9 \subset S(\id_X)$. Since every $f \in C(X,c_1)$ such that
$f \simeq_{c_1} \id_X$ and $f \neq \id_X$ moves every
point $q$ of $X$ such that $p_3(q)=1$, it follows easily that
\[ S(\id_X,c_1)=F(X,c_1) =
\{0,1,2,3,4,5,6,7,8,9,17\}.
\]
\end{exl}

\section{Freezing sets}
In this section, we consider
subsets of $\Fix(f)$ that determine
that $f \in C(X,\kappa)$ must be
the identity function $\id_X$.
Interesting questions include what
properties such sets have, and how
small they can be.

In classical topology, given a connected set
$X \subset \R^n$ and a continuous self-map
$f$ on $X$, knowledge of a finite subset $A$
of the fixed points of $f$ rarely tells us
much about the behavior of $f$ on 
$X \setminus A$. By contrast, we see in this
section that knowledge of a subset of the
fixed points of a continuous self-map $f$ on a
digital image can completely characterize $f$ as
an identity map.

\subsection{Definition and basic properties}
\begin{definition}
\label{freezeDef}
Let $(X,\kappa)$ be a
digital image. We say
$A \subset X$ is a 
{\em freezing set for $X$}
if given $g \in C(X,\kappa)$,
$A \subset \Fix(g)$ implies
$g=\id_X$.
\end{definition}

\begin{thm}
\label{freezeAndFix}
Let $(X,\kappa)$ be a digital image. Let $A \subset X$.
The following are equivalent.
\begin{enumerate}
    \item $A$ is a freezing set for $X$.
    \item $\id_X$ is the unique extension of $\id_A$ to a member of $C(X,\kappa)$.
    \item For every isomorphism $F: X \to (Y,\lambda)$, if $g: X \to Y$ is 
          $(\kappa,\lambda)$-continuous and $F|_A = g|_A$, then $g=F$.
    \item Any continuous $g:A \to Y$ has at most one extension to an
          isomorphism $\bar g:X \to Y$.
\end{enumerate}
\end{thm}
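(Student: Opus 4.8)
The plan is to prove the chain of equivalences by a cycle of implications, say $(1)\Rightarrow(2)\Rightarrow(3)\Rightarrow(4)\Rightarrow(1)$, unwinding the definitions at each step. The only substantive content is that a freezing set is defined purely in terms of self-maps $X\to X$, whereas items (3) and (4) speak about maps $X\to Y$ for an arbitrary target $Y$; the bridge between these is the observation that an isomorphism $F\colon X\to Y$ lets us transport any continuous $g\colon X\to Y$ to the self-map $F^{-1}\circ g\colon X\to X$, which is continuous by Theorem~\ref{composition}, and similarly in reverse. So the ``hard'' part is really just bookkeeping about where continuity and the isomorphism property are used.

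For $(1)\Rightarrow(2)$: saying $\id_X$ is the unique extension of $\id_A$ to $C(X,\kappa)$ means precisely that every $g\in C(X,\kappa)$ with $g|_A=\id_A$ equals $\id_X$; but $g|_A=\id_A$ is exactly the statement $A\subseteq\Fix(g)$, so this is a verbatim restatement of Definition~\ref{freezeDef}. For $(2)\Rightarrow(3)$: given an isomorphism $F\colon X\to Y$ and continuous $g\colon X\to Y$ with $F|_A=g|_A$, form $h=F^{-1}\circ g\colon X\to X$, which is $\kappa$-continuous by Theorem~\ref{composition} since $F^{-1}$ is continuous. For $a\in A$ we get $h(a)=F^{-1}(g(a))=F^{-1}(F(a))=a$, so $h|_A=\id_A$; by (2), $h=\id_X$, hence $g=F\circ h=F$. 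For $(3)\Rightarrow(4)$: suppose $g\colon A\to Y$ has two extensions to isomorphisms $\bar g_1,\bar g_2\colon X\to Y$. Apply (3) with $F=\bar g_1$ (an isomorphism) and the continuous map $\bar g_2\colon X\to Y$: on $A$ we have $\bar g_1|_A=g=\bar g_2|_A$, so (3) gives $\bar g_2=\bar g_1$. For $(4)\Rightarrow(1)$: take $g\in C(X,\kappa)$ with $A\subseteq\Fix(g)$; then $\id_A=(\id_X)|_A=g|_A$, so the common restriction $\id_A\colon A\to X$ has both $\id_X$ and... here we must be slightly careful, since (4) demands extensions to \emph{isomorphisms} and $g$ need not be one a priori.

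To close that last gap cleanly, I would instead run $(4)\Rightarrow(2)\Rightarrow(1)$, deriving (2) from (4) by taking $Y=X$, $g=\id_A\colon A\to X$: the identity $\id_X$ is one extension of $\id_A$ to an isomorphism; if $h\in C(X,\kappa)$ is \emph{any} extension of $\id_A$, I need $h$ to be an isomorphism to invoke (4). This is the one real obstacle, and it is handled by the standard finiteness argument: a continuous self-map $h$ of a finite digital image that agrees with the identity on a freezing-like set is forced to be injective hence bijective; but to avoid circularity I would instead first establish $(1)\Leftrightarrow(2)$ directly (immediate, as above), then $(2)\Leftrightarrow(3)$ via the transport trick in both directions (for $(3)\Rightarrow(2)$ just take $Y=X$, $F=\id_X$), and finally $(3)\Leftrightarrow(4)$: $(3)\Rightarrow(4)$ as above, and $(4)\Rightarrow(3)$ by contrapositive — if some continuous $g\ne F$ agrees with an isomorphism $F$ on $A$, then $F^{-1}\circ g$ and $\id_X$ are two distinct extensions of $\id_A$, and since the identity map on $X$ is trivially an isomorphism while $F^{-1}\circ g$ equals $\id_X$ on $A$... again the snag is whether $F^{-1}\circ g$ is an isomorphism. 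The clean resolution: phrase the whole equivalence so that $(4)$ is reached last from $(2)$ directly, using that if $g\colon A\to Y$ extends to isomorphisms $\bar g_1,\bar g_2$, then $\bar g_1^{-1}\circ\bar g_2$ is an isomorphism $X\to X$ restricting to $\id_A$, so by (2) it is $\id_X$, giving $\bar g_2=\bar g_1$; and for $(4)\Rightarrow(2)$, observe that if $h\in C(X,\kappa)$ restricts to $\id_A$ but $h\ne\id_X$, then since $A\subseteq\Fix(h)$ and $X$ is finite, iterating $h$ or using a pigeonhole on $\#h(X)$ shows we may assume $h$ is non-surjective, contradicting... — the robust and honest choice is to cite finiteness of $X$ and note $\id_X$ is an isomorphism, then prove $(1),(2),(3)$ equivalent without (4), and treat $(4)$ as equivalent to $(2)$ using only composition of isomorphisms. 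I expect the writeup to be short once the transport-by-$F^{-1}$ idea is fixed as the organizing principle.
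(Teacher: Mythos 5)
Your handling of the equivalence of (1), (2), and (3) is correct and matches the paper's argument exactly: (1) and (2) are restatements of Definition~\ref{freezeDef}, and the transport $h=F^{-1}\circ g$ (continuous by Theorem~\ref{composition}, restricting to $\id_A$) gives (2) $\Rightarrow$ (3), with (3) $\Rightarrow$ (1) obtained by taking $Y=X$ and $F=\id_X$. Your proof of (2) $\Rightarrow$ (4) via the automorphism $\bar g_1^{-1}\circ\bar g_2$ of $X$, which restricts to $\id_A$ and hence equals $\id_X$, is also fine and is essentially what the paper dismisses as ``3) $\Rightarrow$ 4): This is elementary.''

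The snag you keep hitting in (4) $\Rightarrow$ (2) is not a defect of your bookkeeping: it is a genuine gap, and the paper's own proof has it too. The paper disposes of 4) $\Rightarrow$ 2) by taking $g$ to be the inclusion $A\hookrightarrow X$, ``which extends to $\id_X$''; but (4) only guarantees that $\id_X$ is the unique extension of $\id_A$ to an \emph{isomorphism}, whereas (2) demands uniqueness among all of $C(X,\kappa)$. None of your attempted repairs (iterating $h$, pigeonhole on $\#h(X)$) can succeed, because the implication is false. As your own composition argument shows, (4) is equivalent to saying that the only automorphism of $(X,\kappa)$ fixing $A$ pointwise is $\id_X$, and this is strictly weaker than (1): take $X=[0,1]_{\Z}$ with $c_1$ and $A=\{0\}$. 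Any isomorphism from $X$ onto a two-point image is determined by where it sends $0$, so (4) holds; yet the constant map with value $0$ lies in $C(X,c_1)$, fixes $A$, and is not $\id_X$, so $A$ is not a freezing set (consistent with Proposition~\ref{intervalFreeze}, which makes $\{0,1\}$ a \emph{minimal} freezing set). The honest conclusion is the one you were circling: prove (1) $\Leftrightarrow$ (2) $\Leftrightarrow$ (3) $\Rightarrow$ (4) and record that (4) is strictly weaker, rather than trying to close the cycle.
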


\begin{proof}
1) $\Leftrightarrow$ 2): This follows from Definition~\ref{freezeDef}.

1) $\Rightarrow$ 3): Suppose $A$ is a freezing set for $X$. 
Let $F: X \to Y$ be a $(\kappa,\lambda)$-isomorphism.
Let $g: X \to Y$ be $(\kappa,\lambda)$-continuous,
such that $g|_A = F|_A$. Then
\[F^{-1} \circ g|_A = F^{-1} \circ F|_A = \id_X|_A = \id_A.
\]
Since the composition of digitally continuous functions
is continuous, it follows by hypothesis that
$F^{-1} \circ g = \id_X$, and therefore that
\[ g = F \circ (F^{-1} \circ g) =
   F \circ \id_X = F.
\]

3) $\Rightarrow$ 1): Suppose for every isomorphism $F: X \to (Y,\lambda)$, if
$g: X \to Y$ is $(\kappa,\lambda)$-continuous
and $F|_A = g|_A$, then $g=F$.
For $g \in C(X,\kappa)$, $A \subset \Fix(g)$ implies
$g|_A = \id_X|_A$, so since $\id_X$ is an isomorphism, $g=\id_X$.

3) $\Rightarrow$ 4): This is elementary.

4) $\Rightarrow$ 2): This follows by taking $g$ to be the inclusion of $A$ into $X$,
which extends to $\id_X$.
\end{proof}

Freezing sets are topological invariants in the sense of the following.

\begin{thm}
\label{freezeInvariant}
Let $A$ be a freezing set for the digital image $(X,\kappa)$ and let
$F: (X,\kappa) \to (Y,\lambda)$ be an isomorphism. Then $F(A)$ is
a freezing set for $(Y,\lambda)$.
\end{thm}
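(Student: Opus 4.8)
The plan is to transport the defining property of a freezing set across the isomorphism $F$. Suppose $g \in C(Y,\lambda)$ satisfies $F(A) \subset \Fix(g)$; I must show $g = \id_Y$. The natural candidate for a continuous self-map of $X$ to which we apply the freezing-set hypothesis is the conjugate $h = F^{-1} \circ g \circ F : X \to X$. By Theorem~\ref{composition}, $h$ is $\kappa$-continuous since it is a composition of continuous maps ($F$, then $g$, then $F^{-1}$, the last being continuous because $F$ is an isomorphism).

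Next I would check that $A \subset \Fix(h)$. For $a \in A$ we have $F(a) \in F(A) \subset \Fix(g)$, so $g(F(a)) = F(a)$, and hence $h(a) = F^{-1}(g(F(a))) = F^{-1}(F(a)) = a$. Thus $A \subset \Fix(h)$, and since $A$ is a freezing set for $(X,\kappa)$, Definition~\ref{freezeDef} gives $h = \id_X$, i.e. $F^{-1} \circ g \circ F = \id_X$. Composing on the left by $F$ and on the right by $F^{-1}$ yields $g = F \circ \id_X \circ F^{-1} = \id_Y$. Since $g$ was an arbitrary element of $C(Y,\lambda)$ containing $F(A)$ in its fixed point set, $F(A)$ is a freezing set for $(Y,\lambda)$.

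There is no real obstacle here; the only thing to be a little careful about is that $g$ is assumed to map $Y$ to $Y$ (not merely to be continuous into some other image), so that the conjugation $F^{-1} \circ g \circ F$ genuinely lands back in $X$ and is a self-map to which the freezing-set definition applies. Alternatively, the whole statement follows immediately from the equivalence (1) $\Leftrightarrow$ (3) in Theorem~\ref{freezeAndFix}: taking the isomorphism in item (3) to be $G \circ F^{-1}$ for an arbitrary isomorphism $G$ out of $Y$ recovers the freezing property of $F(A)$ in $(Y,\lambda)$. I would present the direct conjugation argument as the cleaner of the two, mentioning the reduction to Theorem~\ref{freezeAndFix} only as a remark if space permits.
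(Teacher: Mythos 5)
Your conjugation argument is correct, and it is essentially the paper's proof: the paper applies Theorem~\ref{freezeAndFix}(3) to $g \circ F$, which, once that theorem's proof is unfolded, amounts to exactly your map $F^{-1} \circ g \circ F$ fixing $A$ pointwise. Both routes are sound, and you correctly identify the reduction to Theorem~\ref{freezeAndFix} as the alternative the paper actually writes down.
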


\begin{proof}
Let $g \in C(Y,\lambda)$
such that $g|_{F(A)} = \id_Y|_{F(A)}$. Then
\[ g \circ F|_A = 
g|_{F(A)} \circ F|_A =
   \id_Y|_{F(A)} \circ F|_A
  = F|_A.
\]
By Theorem~\ref{freezeAndFix}, $g \circ F = F$. Thus
\[ g = (g \circ F) \circ F^{-1} = F \circ F^{-1} = \id_Y.
\]
By Definition~\ref{freezeDef}, $F(A)$ is a freezing set for $(Y,\lambda)$.
\end{proof}

We will use the following.

\begin{prop}
{\rm \cite{bs19a}}
\label{uniqueShortest}
Let $(X,\kappa)$ be a digital
image and $f \in C(X,\kappa)$.
Suppose $x,x' \in \Fix(f)$ are
such that there is a unique
shortest $\kappa$-path $P$ in $X$ 
from $x$ to $x'$. Then
$P \subset \Fix(f)$.
\end{prop}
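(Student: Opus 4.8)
\textbf{Proof proposal for Proposition~\ref{uniqueShortest}.}
The plan is to argue by contradiction: suppose some point of the unique shortest $\kappa$-path $P$ is moved by $f$, and construct a strictly shorter path from $x$ to $x'$, contradicting the hypothesis that $P$ is the unique shortest one. Write $P = \{x_i\}_{i=0}^m$ with $x_0 = x$, $x_m = x'$, the $x_i$ distinct, and $x_i \adj_\kappa x_{i+1}$; since $x = x_0$ and $x' = x_m$ lie in $\Fix(f)$, the length $m$ equals the $\kappa$-distance $d_\kappa(x,x')$ in $X$. Assume for contradiction that $x_j \notin \Fix(f)$ for some $0 < j < m$, and consider the image path $f(P)$.

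The key observation is that $f(P) = \{f(x_i)\}_{i=0}^m$ is itself a path from $x$ to $x'$: continuity of $f$ (via Theorem, the adjacency characterization) gives $f(x_i) \adjeq_\kappa f(x_{i+1})$ for each $i$, and $f(x_0) = x$, $f(x_m) = x'$ because the endpoints are fixed. Hence $f(P)$ contains a path from $x$ to $x'$ after deleting repetitions, so its length is at least $d_\kappa(x,x') = m$. But $f(P)$ arises from the $m+1$ points $x_0,\ldots,x_m$ by applying $f$, and since $f(x_j) \ne x_j$ while $f$ could collapse or reroute, I need to extract from this a path of length \emph{strictly less than} $m$, or else produce a second shortest path of length exactly $m$ distinct from $P$. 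The cleanest route: consider the concatenation of the sub-path $x_0,\ldots,x_j$ of $P$ with the image sub-path $f(x_j), f(x_{j+1}),\ldots,f(x_m) = x'$; since $x_j \adjeq_\kappa f(x_j)$? — that is not automatic. So instead I would compare $P$ directly with $f(P)$: both are walks of $m+1$ vertices from $x$ to $x'$; $f(P)$ reduces to a shortest path only if no repetitions and all steps are genuine adjacencies, which forces $f(P)$ to be \emph{a} shortest path, and if $f(x_j)\ne x_j$ then $f(P)\ne P$, contradicting uniqueness; whereas if $f(P)$ has a repeated vertex or a step $f(x_i)=f(x_{i+1})$, then deleting it yields a path from $x$ to $x'$ of length $< m$, contradicting that $m = d_\kappa(x,x')$.

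Carrying this out: if the vertices $f(x_0),\ldots,f(x_m)$ are pairwise distinct and each consecutive pair is $\kappa$-adjacent (not merely equal), then $f(P)$ is a $\kappa$-path of length $m$ from $x$ to $x'$, hence a shortest one; by uniqueness it must equal $P$ as a path, so $f(x_i) = x_i$ for all $i$, contradicting $f(x_j)\ne x_j$. Otherwise, either two of the $f(x_i)$ coincide or some consecutive pair satisfies $f(x_i) = f(x_{i+1})$; in either case, removing the redundant portion of the walk $f(x_0),\ldots,f(x_m)$ produces a $\kappa$-path from $x$ to $x'$ using strictly fewer than $m+1$ vertices, i.e.\ of length $\le m-1 < d_\kappa(x,x')$, which is impossible. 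Both alternatives being contradictory, we conclude $x_j \in \Fix(f)$ for every $j$, i.e.\ $P \subset \Fix(f)$.

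I expect the main obstacle to be the bookkeeping in the "otherwise" case — carefully verifying that after deleting repeated vertices or a trivial step from the walk $f(P)$ one still has a genuine path (in the $\adjeq_\kappa$ sense, then pruned to $\adj_\kappa$ steps) joining $x$ to $x'$, and that its length is genuinely at most $m-1$. This is essentially the standard fact that any walk between two vertices contains a path no longer than the walk, together with the strict drop in length when the walk is not already reduced; the only subtlety is handling the $\adjeq$ (adjacent-or-equal) steps coming from continuity, but collapsing equalities is exactly what produces the strict inequality, so the two cases dovetail cleanly.
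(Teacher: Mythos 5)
The paper does not actually prove Proposition~\ref{uniqueShortest}; it is imported from~\cite{bs19a}, so there is no in-paper argument to compare against. Your proposal is, however, the standard proof of this fact and is essentially correct: the image walk $f(x_0),\ldots,f(x_m)$ is a walk of $m$ steps (in the $\adjeq_\kappa$ sense) from $x$ to $x'$, any collapse or repetition would yield a path of length $<m=d_\kappa(x,x')$ (impossible), and otherwise $f(P)$ is a shortest path which uniqueness forces to be $P$. One small wording slip: $m=d_\kappa(x,x')$ holds because $P$ is a shortest path, not because the endpoints are fixed. The only step worth making explicit is the passage from ``$f(P)=P$'' to ``$f(x_i)=x_i$ for all $i$'': since the paper defines a path as a \emph{set}, equality of paths is a priori only equality of vertex sets. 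This is easily repaired, e.g.\ by noting that $d_\kappa(x_0,f(x_i))\le i$ and $d_\kappa(f(x_i),x_m)\le m-i$ force $d_\kappa(x_0,f(x_i))=i$, while $x_i$ is the unique point of $P$ at distance $i$ from $x_0$ (a shortest path admits no chords), whence $f(x_i)=x_i$; with that sentence added your argument is complete.
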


Let $p_i: \Z^n \to \Z$ be the projection
to the $i^{th}$ coordinate:
$p_i(z_1,\ldots,z_n) = z_i$.

The following assertion can
be interpreted to say that
in a $c_u$-adjacency,
a continuous function that
moves a point~$p$ also moves
a point that is ``behind"
$p$. E.g., in $\Z^2$, if $q$ and $q'$ are
$c_1$- or $c_2$-adjacent with $q$
left, right, above, or below $q'$, and a
continuous function $f$ moves $q$ to the left,
right, higher, or lower, respectively, then
$f$ also moves $q'$ to the left,
right, higher, or lower, respectively.

\begin{lem}
\label{c1pulling}
Let $(X,c_u)\subset \Z^n$ be a digital image, 
$1 \le u \le n$. Let $q, q' \in X$ be such that
$q \adj_{c_u} q'$.
Let $f \in C(X,c_u)$.
\begin{enumerate}
    \item If $p_i(f(q)) > p_i(q) > p_i(q')$
          then $p_i(f(q')) > p_i(q')$.
    \item If $p_i(f(q)) < p_i(q) < p_i(q')$
          then $p_i(f(q')) < p_i(q')$.
\end{enumerate}
\end{lem}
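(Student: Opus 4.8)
The plan is to prove part (1) and note that part (2) follows by an entirely symmetric argument (or by applying part (1) to the function obtained after reflecting the $i$th coordinate). So fix the hypotheses of (1): $q \adj_{c_u} q'$, $f \in C(X,c_u)$, and $p_i(f(q)) > p_i(q) > p_i(q')$. Since $q \adj_{c_u} q'$ and their $i$th coordinates differ, the definition of $c_u$-adjacency forces $|p_i(q) - p_i(q')| = 1$; combined with $p_i(q) > p_i(q')$ this gives $p_i(q) = p_i(q') + 1$. Also from the first inequality, $p_i(f(q)) \ge p_i(q) + 1 = p_i(q') + 2$.

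Now I would invoke continuity of $f$ at the edge $q \adj_{c_u} q'$: by the adjacency characterization of continuity (the theorem of \cite{Bx99} quoted above), $f(q) \adjeq_{c_u} f(q')$. If $f(q) = f(q')$, then $p_i(f(q')) = p_i(f(q)) \ge p_i(q') + 2 > p_i(q')$ and we are done. Otherwise $f(q) \adj_{c_u} f(q')$, and again by the definition of $c_u$-adjacency, for every coordinate index $j$ we have $|p_j(f(q)) - p_j(f(q'))| \le 1$. Applying this with $j = i$: $p_i(f(q')) \ge p_i(f(q)) - 1 \ge (p_i(q') + 2) - 1 = p_i(q') + 1 > p_i(q')$. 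This establishes the conclusion.

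I expect the argument to be short and the only point requiring care is the book-keeping with the $c_u$-adjacency definition — specifically extracting the two facts that (a) $c_u$-adjacent points whose $i$th coordinates differ must differ by exactly $1$ there, and (b) $c_u$-adjacent points have all coordinate differences bounded by $1$. Both are immediate from the bulleted definition of $c_u$ in the Preliminaries (the second bullet gives that a difference of absolute value other than $1$ must be $0$, hence all differences lie in $\{-1,0,1\}$). The genuinely substantive input is the continuity-as-edge-preservation theorem, which lets us pass from an adjacency in the domain to an adjacency-or-equality in the codomain; everything else is arithmetic on integers. The main (very mild) obstacle is simply handling the equality case $f(q) = f(q')$ separately, as done above, since the "all coordinate differences $\le 1$" fact is phrased for genuinely adjacent (hence distinct) points.
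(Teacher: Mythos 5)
Your proposal is correct and follows essentially the same route as the paper: use the $c_u$-adjacency of $q$ and $q'$ to get $p_i(q')=p_i(q)-1$, then use continuity to get $f(q)\adjeq_{c_u} f(q')$ and conclude $p_i(f(q'))\ge p_i(f(q))-1>p_i(q')$, with part (2) by symmetry. Your explicit separation of the cases $f(q)=f(q')$ and $f(q)\adj_{c_u} f(q')$ is a minor additional care step that the paper folds into the single $\adjeq$ inequality.
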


\begin{proof}
\begin{enumerate}
    \item Suppose $p_i(f(q)) > p_i(q) > p_i(q')$. Since $q \adj_{c_u} q'$,
    if $p_i(q)=m$ then $p_i(q')=m-1$.
    Then $p_i(f(q))>m$. By continuity of
    $f$, we must have $f(q') \adjeq_{c_u} f(q)$, so $p_i(f(q')) \ge m > p_i(q')$.
    \item This case is proven similarly.
\end{enumerate}
\end{proof}

\begin{thm}
\label{retractNoFreeze}
Let $(X,\kappa)$ be a digital image. Let
$X'$ be a proper subset of $X$ that is a
retract of $X$. Then $X'$ does not contain
a freezing set for $(X,\kappa)$.
\end{thm}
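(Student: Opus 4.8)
The plan is to exhibit the retraction itself as a continuous self-map of $X$ that fixes all of $X'$ yet is not $\id_X$; this immediately defeats any candidate freezing set contained in $X'$.

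First I would invoke the hypothesis to fix a $\kappa$-continuous retraction $r\colon X\to X'$. By the definition of a retraction, $r$ is $\kappa$-continuous, so $r\in C(X,\kappa)$, and $r(a)=a$ for every $a\in X'$; hence $X'\subseteq\Fix(r)$. Next I would use that $X'$ is a \emph{proper} subset of $X$: the image $r(X)$ is contained in $X'$, so $r(X)\subsetneq X$, and in particular $r\neq\id_X$ (since $\id_X$ is surjective). Now suppose for contradiction that some $A\subseteq X'$ were a freezing set for $(X,\kappa)$. Then $A\subseteq X'\subseteq\Fix(r)$, so $A\subseteq\Fix(r)$ with $r\in C(X,\kappa)$; by Definition~\ref{freezeDef} this forces $r=\id_X$, contradicting $r\neq\id_X$. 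Therefore no subset of $X'$ is a freezing set for $(X,\kappa)$, which is the claim. (Equivalently, one could phrase the contradiction through Theorem~\ref{freezeAndFix}(2): $r$ restricts to $\id_A$ on $A$ but $r\neq\id_X$, so $\id_X$ is not the unique continuous extension of $\id_A$.)

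There is no real obstacle here; the proof is a two-line consequence of the definitions. The only two points worth stating explicitly are that a retraction is by definition $\kappa$-continuous, so it genuinely lies in $C(X,\kappa)$, and that properness of $X'$ is exactly what guarantees $r$ fails to be the identity on $X$. Both are immediate, and no case analysis on $\kappa$ or the structure of $X$ is needed.
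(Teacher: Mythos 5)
Your proof is correct and is essentially the paper's own argument: both use the retraction (viewed as a self-map of $X$ via the inclusion $i\colon X'\to X$, which the paper writes explicitly as $f=i\circ r$) as a continuous non-identity map fixing all of $X'$. The only cosmetic difference is that you write $r\in C(X,\kappa)$ directly where the paper composes with the inclusion first; otherwise the two proofs coincide.
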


\begin{proof}
Let $r: X \to X'$ be a retraction. Then
$f = i \circ r \in C(X,\kappa)$, where 
$i: X' \to X$ is the inclusion map.
Then $f|_{X'} = \id_{X'}$, but $f \neq \id_X$.
The assertion follows.
\end{proof}

\begin{cor}
\label{reduxPtAndFreeze}
Let $(X,\kappa)$ be a reducible digital image.
Let $x$ be a reduction point for $X$. Let $A$ be
a freezing set for $X$. Then $x \in A$.
\end{cor}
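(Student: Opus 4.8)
The plan is to deduce this at once from Theorem~\ref{retractNoFreeze}, taking the proper retract there to be $X' = X \setminus \{x\}$.

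The fact that makes this work is that a reduction point $x$ of $X$ has the property that $X \setminus \{x\}$ is a retract of $X$. Indeed, the reduction of $X$ at $x$ is realized by a continuous $r : X \to X$ with $r \simeq_{\kappa} \id_X$, $x \notin r(X)$, and $r$ fixing each point of $X \setminus \{x\}$; for instance, when $x$ arises as in Lemma~\ref{reducingPt}, one may take $r$ to be the deformation retraction of $X$ onto $X \setminus \{x\}$ furnished there. Viewing $r$ as a map onto $X \setminus \{x\}$ exhibits $X \setminus \{x\}$ as a retract of $X$, and it is a \emph{proper} subset since $x \in X$.

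Now I would apply Theorem~\ref{retractNoFreeze} with $X' = X \setminus \{x\}$: a proper subset of $X$ that is a retract of $X$ cannot contain a freezing set for $(X,\kappa)$. Since $A$ is, by hypothesis, a freezing set for $X$, we conclude $A \not\subseteq X \setminus \{x\}$, that is, $x \in A$.

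The whole content of the corollary thus sits in Theorem~\ref{retractNoFreeze}; the only step calling for care is the first one --- knowing that the reduction at $x$ is implemented by a retraction fixing $X \setminus \{x\}$, rather than by some arbitrary non-surjective map homotopic to $\id_X$ that merely omits $x$. I expect this to be the one place a reader might hesitate, and it is supplied by the reducibility framework already set up (Lemmas~\ref{htpNonsurj} and~\ref{reducingPt} and the surrounding discussion of reduction points).
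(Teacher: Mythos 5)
Your proof is correct and takes essentially the same route as the paper: the paper likewise obtains a retraction $r: X \to X \setminus \{x\}$ from Lemma~\ref{reducingPt} and then concludes via Theorem~\ref{retractNoFreeze} that $X \setminus \{x\}$ cannot contain a freezing set. The one step you flag as needing care --- that a reduction point yields an actual retraction onto $X \setminus \{x\}$, not merely a non-surjective map homotopic to $\id_X$ --- is handled by the paper with exactly the same appeal to Lemma~\ref{reducingPt}.
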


\begin{proof}
Since $x$ is a reduction point for $X$, by Lemma~\ref{reducingPt},
there is a retraction $r: X \to X \setminus \{x\}$. It follows that
$X \setminus \{x\}$ does not contain a freezing set for $(X,\kappa)$.
\end{proof}

\begin{prop}
\label{3nbr}
Let $(X,c_2)$ be a connected digital image in~$\Z^2$. Suppose $x_0 \in X$ is such
that $N_{c_2}(x_0)$ is connected and $\#N_{c_2}(x_0) \in \{1,2,3\}$. If~$A$
is a freezing set for $(X,c_2)$, then $x_0 \in A$.
\end{prop}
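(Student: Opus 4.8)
The plan is to show that the hypotheses force $x_0$ to be a reduction point of $(X,c_2)$, and then to quote Corollary~\ref{reduxPtAndFreeze}. In other words, this Proposition is essentially a corollary of the machinery already assembled in Section~3 and the start of Section~5; the work is in verifying that the reduction point can be taken to be $x_0$ itself.

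First I would re-run the neighborhood computation from the proof of Theorem~\ref{1-2-3-reducible}. Under the stated hypotheses there is a point $y \in N_{c_2}(x_0)$ with $N_{c_2}^*(x_0) \subset N_{c_2}^*(y)$: if $\#N_{c_2}(x_0)=1$ then $N_{c_2}(x_0)=\{y\}$ and the containment is immediate; if $\#N_{c_2}(x_0)\in\{2,3\}$, then since $N_{c_2}(x_0)$ is $c_2$-connected one of its members $y$ is $c_2$-adjacent to all the others, whence $\{x_0\}\cup\bigl(N_{c_2}(x_0)\setminus\{y\}\bigr)\subset N_{c_2}(y)$, and adding $y\in N_{c_2}^*(y)$ gives $N_{c_2}^*(x_0)\subset N_{c_2}^*(y)$.

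Next I would apply Lemma~\ref{reducingPt} to the distinct points $x_0,y$. Since $N_{c_2}^*(x_0)\subset N_{c_2}^*(y)$, the lemma asserts that $(X,c_2)$ is reducible, that $x_0$ is a reduction point of $X$, and that $X\setminus\{x_0\}$ is a deformation retract of $X$ (indeed, this deformation-retract statement is exactly what powers Corollary~\ref{reduxPtAndFreeze} via Theorem~\ref{retractNoFreeze}). Finally, with $A$ a freezing set for the reducible image $(X,c_2)$ and $x_0$ a reduction point, Corollary~\ref{reduxPtAndFreeze} gives $x_0\in A$, completing the proof.

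The only point requiring care — the ``main obstacle,'' such as it is — is ensuring that the reduction point produced is $x_0$ rather than $y$; this is precisely what Lemma~\ref{reducingPt} guarantees, since it designates the point with the \emph{smaller} punctured neighborhood as the reduction point. The connectedness hypothesis on $X$ plays no essential role here beyond guaranteeing $N_{c_2}(x_0)\neq\emptyset$, which is already implied by $\#N_{c_2}(x_0)\ge 1$; it is stated for consistency with the surrounding results.
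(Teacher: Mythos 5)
Your proof is correct and follows essentially the same route as the paper: the paper's own proof simply cites the neighborhood computation from the proof of Theorem~\ref{1-2-3-reducible} to get $N_{c_2}^*(x_0) \subset N_{c_2}^*(y)$, invokes Lemma~\ref{reducingPt} to conclude $x_0$ is a reduction point, and finishes with Corollary~\ref{reduxPtAndFreeze}. You have merely written out the details the paper leaves implicit, and your observation that Lemma~\ref{reducingPt} names the point with the smaller closed neighborhood as the reduction point is exactly the right thing to check.
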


\begin{proof}
By the proof of Theorem~\ref{1-2-3-reducible}, we can use Lemma~\ref{reducingPt}
to conclude that $x_0$ is a reduction point. The assertion follows from Corollary~\ref{reduxPtAndFreeze}.
\end{proof}

Proposition~\ref{3nbr} cannot in general
be extended to permit $\#N_{c_2}(x_0) = 4$,
as shown in the following.

\begin{exl}
\label{irredAtx0}
Let $X = \{x_i\}_{i=0}^4 \subset \Z^2$, where 
\[x_0=(0,0),~x_1=(0,-1),~x_2=(1,0),~x_3=(0,1),~
  x_4=(-1,1).
\]
See Figure~2.
Then $N_{c_2}(x_0)$ is $c_2$-connected and
$\#N_{c_2}(x_0) = 4$. It is easily seen that
$X \setminus \{x_0\}$ is a freezing set for
$(X,c_2)$.
\end{exl}

\begin{figure}
\begin{center}
\includegraphics[height=1.5in]{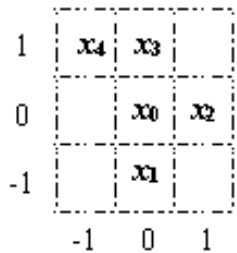}
\end{center}
\label{irreducibleAtx0}
\caption{Illustration for 
Example~\ref{irredAtx0} 
}
\end{figure}

\subsection{Boundaries and freezing sets}
For any digital image $(X,\kappa)$,
clearly $X$ is a freezing set. An
interesting question is how small
$A \subset X$ can be for $A$ to be
a freezing set for $X$. We say a
freezing set $A$ is {\em minimal}
if no proper subset of $A$ is a
freezing set for~$X$.

\begin{definition}
Let $X \subset \Z^n$.
\begin{itemize}
    \item The
{\em boundary of} $X$
{\rm \cite{RosenfeldMAA}} is
\[Bd(X) = \{x \in X \, | \mbox{ there exists } y \in \Z^n \setminus X \mbox{ such that } y \adj_{c_1} x\}.
\]
\item The {\em interior of} $X$
is $int(X) = X \setminus Bd(X)$.
\end{itemize}
\end{definition}

\begin{prop}
\label{intervalFreeze}
Let $[a,b]_{\Z} \subset [c,d]_{\Z}$ and
let $f: [a,b]_{\Z} \to [c,d]_{\Z}$ be
$c_1$-continuous.
\begin{itemize}
\item If $\{a,b\} \subset \Fix(f)$, then
      $[a,b]_{\Z} \subset \Fix(f)$.
\item $Bd([a,b]_{\Z}) = \{a,b\}$ is a minimal 
freezing set for $[a,b]_{\Z}$.
\end{itemize}
\end{prop}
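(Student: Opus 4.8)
The plan is to prove the two bullet points in order, since the second depends on the first. For the first bullet, I would observe that the unique shortest $c_1$-path in $[c,d]_{\Z}$ from $a$ to $b$ is $[a,b]_{\Z}$ itself (in $\Z$ with $c_1$-adjacency, the shortest path between two points is unique and consists of all integers between them). Since $\{a,b\}\subset\Fix(f)$, Proposition~\ref{uniqueShortest} immediately gives $[a,b]_{\Z}\subset\Fix(f)$, i.e., $f=\id$. Note one must be slightly careful: Proposition~\ref{uniqueShortest} is stated for $f\in C(X,\kappa)$ with $X$ the ambient image, whereas here $f$ maps $[a,b]_{\Z}$ into the possibly larger $[c,d]_{\Z}$. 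This is harmless because a shortest path in $[a,b]_{\Z}$ from $a$ to $b$ is still $[a,b]_{\Z}$, and the argument of Proposition~\ref{uniqueShortest} only uses the values of $f$ on that path; alternatively one can just give the two-line direct induction argument, pushing from both endpoints inward using continuity (if $f$ fixes $a,a+1,\ldots,k$ and $b$, then $f(k+1)$ is $c_1$-adjacent-or-equal to $f(k)=k$, so $f(k+1)\in\{k-1,k,k+1\}$, and a symmetric squeeze from $b$ forces $f(k+1)=k+1$).

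For the second bullet I must show three things: that $Bd([a,b]_{\Z})=\{a,b\}$, that $\{a,b\}$ is a freezing set, and that it is minimal. The identity $Bd([a,b]_{\Z})=\{a,b\}$ is routine from the definition: $a$ has the external $c_1$-neighbor $a-1$ and $b$ has $b+1$, while any interior point $k$ with $a<k<b$ has both $c_1$-neighbors $k-1,k+1$ inside $[a,b]_{\Z}$. That $\{a,b\}$ is a freezing set is exactly the first bullet applied with $[c,d]_{\Z}=[a,b]_{\Z}$: any $g\in C([a,b]_{\Z},c_1)$ with $\{a,b\}\subset\Fix(g)$ satisfies $g=\id$, which is Definition~\ref{freezeDef}.

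For minimality I would show that neither $\{a\}$ nor $\{b\}$ (nor $\emptyset$) is a freezing set, hence no proper subset of $\{a,b\}$ is. It suffices to exhibit, for each, a non-identity continuous self-map fixing it: the constant map $g\equiv a$ fixes $\{a\}$ and is continuous but is not $\id$ (as long as $b>a$, i.e., the image has more than one point), and symmetrically $g\equiv b$ handles $\{b\}$; both also witness that $\emptyset$ is not a freezing set. One should briefly note that $\{a,b\}$ being a freezing set already rules out its being a singleton only when $a=b$, the degenerate one-point case, where $[a,b]_{\Z}=\{a\}$ and the empty set is vacuously the minimal freezing set; I would either assume $a<b$ or dispose of this trivial case in a sentence. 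The only mild subtlety is making sure the constant maps are genuinely $(c_1,c_1)$-continuous, which is immediate since any constant function is continuous. I do not expect a real obstacle here; the content is entirely in the first bullet, which is a direct application of Proposition~\ref{uniqueShortest}.
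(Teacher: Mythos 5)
Your proposal is correct, and apart from the first bullet it matches the paper's proof exactly: the freezing-set claim is obtained by specializing $[c,d]_{\Z}=[a,b]_{\Z}$, and minimality is witnessed by constant maps fixing a proper subset of $\{a,b\}$, just as in the paper. The one real difference is in the first bullet. You lead with Proposition~\ref{uniqueShortest}, and the scruple you raise there is exactly the issue: that proposition is stated for $f\in C(X,\kappa)$, a self-map, whereas here $f$ maps $[a,b]_{\Z}$ into a possibly larger interval, so it does not literally apply. The paper sidesteps this entirely by giving a self-contained argument: if $f\ne\id$ on $[a,b]_{\Z}$, take the smallest $t_0$ with $f(t_0)>t_0$ (or the largest $t_1$ with $f(t_1)<t_1$) and observe that $f(t_0-1)\le t_0-1$ forces $f(t_0-1)\not\adjeq_{c_1}f(t_0)$, contradicting continuity. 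Your fallback ``squeeze'' (upward induction from $a$ giving $f(t)\le t$, downward from $b$ giving $f(t)\ge t$) is essentially this same elementary argument and is airtight, so nothing is lost; I would just promote it from a parenthetical to the main line of the proof, since the generality of the codomain $[c,d]_{\Z}$ in the first bullet is the whole reason the statement is phrased as it is (it is used later as the base case of the induction in Theorem~\ref{corners-min}, where maps into larger cubes must be handled).
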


\begin{proof}
If $[a,b]_{\Z} \neq \Fix(f)$, 
then we have at least one of the following:
\begin{itemize}
    \item For some smallest $t_0$ satisfying
          $a < t_0 < b$, $f(t_0) > t_0$. But
          then $f(t_0 - 1) \le t_0-1$, so
          $f(t_0 - 1) \not \adjeq_{c_1} f(t_0)$,
          contrary to the continuity of $f$.
    \item For some largest $t_1$ satisfying
          $a < t_1 < b$, $f(t_1) < t_1$. But
          then $f(t_1 + 1) \ge t_1+1$, so
          $f(t_1 + 1) \not \adjeq_{c_1} f(t_1)$,
          contrary to the continuity of $f$.
\end{itemize}
It follows that $f|_{[a,b]_{\Z}}$ is an
inclusion function, as asserted.

By taking $[c,d]_{\Z} = [a,b]_{\Z}$ and
considering all $f \in C([a,b]_{\Z}, c_1)$
such that $\{a,b\} \subset \Fix(f)$, we
conclude that $\{a,b\}$
is a freezing set for $[a,b]_{\Z}$.

To establish minimality, observe that all
proper subsets $B$ of $\{a,b\}$ allow
constant functions $c$ that are 
$c_1$-continuous non-identities 
with $c|_B = \id_B$.
\end{proof}

\begin{prop}
\label{bdAndInt}
Let $X \subset \Z^n$ be finite. Let $1 \le u \le n$.
Let $A \subset X$. Let $f \in C(X,c_u)$. If
$Bd(A) \subset \Fix(f)$, then $A \subset \Fix(f)$.
\end{prop}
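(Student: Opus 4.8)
The plan is to argue by contradiction: assume some $a \in A$ satisfies $f(a)\neq a$, and derive a violation of $Bd(A)\subset\Fix(f)$ (so that, in fact, $A\subset\Fix(f)$). Since $f(a)\neq a$, there is an index $i$ with $p_i(f(a))\neq p_i(a)$, and up to reflecting in the $i$th coordinate (replacing $e_i$ by $-e_i$ below and invoking part~(2) of Lemma~\ref{c1pulling} in place of part~(1)) we may assume $p_i(f(a))>p_i(a)$. Let $e_i\in\Z^n$ denote the point with $i$th coordinate $1$ and all other coordinates $0$, and set $a_j = a - j\,e_i$ for $j\ge 0$.

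Next I would locate the boundary point at which to land. The points $a_0,a_1,a_2,\ldots$ are pairwise distinct, so since $A$ is finite there is a largest $m\ge 0$ with $a_0,\ldots,a_m\in A$; this $m$ is well defined because $a_0 = a\in A$. By maximality of $m$ we have $a_{m+1} = a_m - e_i \in \Z^n\setminus A$, and $a_{m+1}\adj_{c_1} a_m$, so $a_m\in Bd(A)$. I then run the ``pulling'' lemma down the segment $a_0,\ldots,a_m$: consecutive points differ by exactly $e_i$, hence are $c_1$-adjacent and therefore $c_u$-adjacent (each of $a_0,\dots,a_m$ lies in $A\subset X$). Starting from $p_i(f(a_0))>p_i(a_0)>p_i(a_1)$, Lemma~\ref{c1pulling}(1) gives $p_i(f(a_1))>p_i(a_1)$, and an immediate induction yields $p_i(f(a_j))>p_i(a_j)$ for all $0\le j\le m$.

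In particular $p_i(f(a_m))>p_i(a_m)$, so $f(a_m)\neq a_m$, contradicting $a_m\in Bd(A)\subset\Fix(f)$. Hence $f$ moves no point of $A$, i.e.\ $A\subset\Fix(f)$. The argument is short; essentially the only place needing care is the bookkeeping in the middle step --- that the sequence $a_j$ marches through $A$ until it first steps outside, and that this first exterior step certifies its predecessor as a point of $Bd(A)$ --- while everything after that is a direct iteration of Lemma~\ref{c1pulling}. (Note this recovers the first bullet of Proposition~\ref{intervalFreeze} as the case $n=u=1$, $A=[a,b]_{\Z}$.)
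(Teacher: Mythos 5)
Your proof is correct and follows essentially the same route as the paper's: assume a point of $A$ is moved, choose a coordinate in which $f$ displaces it, and iterate Lemma~\ref{c1pulling} along that coordinate line (which stays in $A\subset X$ by construction) until the walk exits $A$, forcing a point of $Bd(A)$ to be moved and contradicting $Bd(A)\subset\Fix(f)$. The only cosmetic difference is that the paper first builds the full two-sided segment through the moved point with both endpoints in $Bd(A)$ and then uses whichever end the sign of the displacement dictates, whereas you walk one-sidedly in that direction from the start.
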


\begin{proof}
By hypothesis, it suffices to
show $int(A) \subset \Fix(f)$.
Let $x = (x_1,\ldots,x_n) \in int(A)$.
Suppose, in order to obtain
a contradiction,
$x \not \in \Fix(f)$.
Then for some index~$j$,
\begin{equation}
\label{xNotFixed}
p_j(f(x)) \neq x_j.
\end{equation}
Since $X$ is finite,
there exists a path
$P=\{y_i = (x_1, \ldots, x_{j-1},a_i, x_{j+1} \ldots,x_n)\}_{i=1}^m$
in $X$ such
that $a_1 < x_j < a_m$ and $a_{i+1}=a_i + 1$;
$y_1,y_m \in Bd(A)$; and
$\{y_i\}_{i=2}^{m-1} \subset int(A)$. Note $x \in P$.
Now,~(\ref{xNotFixed})
implies either 
$p_j(f(x)) < x_j$ or
$p_j(f(x)) > x_j$.
If the former, then
by Lemma~\ref{c1pulling}, $y_m \not \in \Fix(f)$;
and if the latter, then
by Lemma~\ref{c1pulling}, $y_1 \not \in \Fix(f)$;
so in either case, we
have a contradiction.
We conclude that 
$x \in \Fix(f)$. The
assertion follows.
\end{proof}

\begin{thm}
\label{bdFreezes}
Let $X \subset \Z^n$ be finite. Then 
for $1 \le u \le n$, $Bd(X)$ is 
a freezing set for $(X,c_u)$.
\end{thm}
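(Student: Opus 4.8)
The plan is to obtain this as an essentially immediate corollary of Proposition~\ref{bdAndInt}. That proposition already does all the real work: it shows that for any finite $X \subset \Z^n$, any $1 \le u \le n$, and any $A \subset X$, a map $f \in C(X,c_u)$ fixing $Bd(A)$ must fix all of $A$. So the only move left is to specialize $A$ to $X$ itself and unwind the definition of a freezing set.

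Concretely, I would argue as follows. Let $f \in C(X,c_u)$ and suppose $Bd(X) \subset \Fix(f)$. Apply Proposition~\ref{bdAndInt} with $A = X$; then $Bd(A) = Bd(X) \subset \Fix(f)$, so the proposition yields $X = A \subset \Fix(f)$. Hence $f = \id_X$. Since this holds for every $f \in C(X,c_u)$ with $Bd(X) \subset \Fix(f)$, Definition~\ref{freezeDef} gives that $Bd(X)$ is a freezing set for $(X,c_u)$.

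I do not anticipate a genuine obstacle here, since the inductive ``pulling'' argument — moving along an axis-parallel path in the $j$th coordinate direction from an interior point out to a boundary point and invoking Lemma~\ref{c1pulling} to derive a contradiction — has already been carried out in the proof of Proposition~\ref{bdAndInt}. If one wanted the statement to stand on its own, one could instead reprove it directly: given $f$ fixing $Bd(X)$, for each $x \in X \setminus Bd(X)$ and each coordinate $j$ with $p_j(f(x)) \neq x_j$, extend $x$ to a maximal axis-parallel path in direction $j$ (possible since $X$ is finite), whose endpoints lie in $Bd(X)$, and apply Lemma~\ref{c1pulling} to reach a contradiction; but this merely duplicates the proposition's proof. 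It is perhaps worth a remark that the conclusion is uniform in $u$ because $Bd(X)$ is defined via $c_1$-adjacency regardless of which $c_u$ we use on $X$, so a single boundary set works simultaneously for all the $c_u$-adjacencies.
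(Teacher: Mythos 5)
Your proposal is correct and matches the paper's own proof exactly: the paper derives Theorem~\ref{bdFreezes} as an immediate consequence of Proposition~\ref{bdAndInt} by taking $A = X$, which is precisely what you do. Your additional remark about the uniformity in $u$ is a fair observation but not needed for the argument.
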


\begin{proof}
The assertion follows from
Proposition~\ref{bdAndInt}.
\end{proof}

Without the finiteness condition 
used in Proposition~\ref{bdAndInt}
and in Theorem~\ref{bdFreezes}, 
the assertions would
be false, as shown in the following.

\begin{exl}
Let $X = \{(x,y) \in \Z^2 \, | \, y \ge 0\}$. Consider
the function $f: X \to X$ defined by
\[ f(x,y) = \left \{ \begin{array}{ll}
    (x,0) & \mbox{if } y=0; \\
    (x+1,y) & \mbox{if } y>0.
\end{array} \right .
\]
Then $f \in C(X,c_2)$, $Bd(X) = \Z \times \{0\}$,
and $f|_{Bd(X)} = \id_{Bd(X)}$,
but $X \not \subset \Fix(f)$, so $Bd(X)$ is not a $c_2$-freezing
set for $X$.
\end{exl}

\subsection{Digital cubes and $c_1$}
In this section, we consider freezing sets for digital cubes using
the $c_1$ adjacency.

\begin{thm}
\label{corners-min}
Let $X = \Pi_{i=1}^n [0,m_i]_{\Z}$.
Let $A = \Pi_{i=1}^n \{0,m_i\}$.
\begin{itemize}
\item Let $Y = \Pi_{i=1}^n [a_i,b_i]_{\Z}$ be
      such that $[0,m_i] \subset [a_i,b_i]_{\Z}$ for all $i$. Let $f: X \to Y$ be
      $c_1$-continuous. If $A \subset \Fix(f)$, then $X \subset \Fix(f)$.
\item $A$ is a freezing set for $(X,c_1)$; minimal for $n \in \{1,2\}$.
\end{itemize}
\end{thm}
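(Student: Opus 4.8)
The first bullet is the heart of the matter, and I would prove it by an induction on dimension that leverages the one-dimensional fact already in hand (Proposition~\ref{intervalFreeze}) together with the ``pulling'' lemma (Lemma~\ref{c1pulling}). Fix $f\colon X\to Y$ that is $c_1$-continuous with $A=\Pi_{i=1}^n\{0,m_i\}\subset\Fix(f)$, and suppose toward a contradiction that $x=(x_1,\ldots,x_n)\in X\setminus\Fix(f)$. Then $p_j(f(x))\neq x_j$ for some coordinate $j$; say $p_j(f(x))>x_j$ (the other case is symmetric). Consider the axis-parallel path $P=\{y_t=(x_1,\ldots,x_{j-1},t,x_{j+1},\ldots,x_n)\}_{t=0}^{m_j}$ through $x$, running from the face $p_j=0$ to the face $p_j=m_j$. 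Since $p_j(f(x))>x_j\ge 0$, repeatedly applying Lemma~\ref{c1pulling}(1) along the segment of $P$ from $x$ up to $y_{m_j}$ gives $p_j(f(y_{m_j}))>m_j$; but $y_{m_j}$ lies on the face $p_j=m_j$, and I want to conclude $y_{m_j}\in\Fix(f)$, contradicting $p_j(f(y_{m_j}))>m_j$.

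So the real content is: every point on a face of the cube is already fixed, given that $A\subset\Fix(f)$. This is exactly where the induction on $n$ enters. The base case $n=1$ is Proposition~\ref{intervalFreeze}. For the inductive step, each facet $F$ of $X$ (obtained by setting one coordinate to $0$ or to $m_i$) is itself an $(n-1)$-dimensional digital cube, its corner set $\Pi\{0,m_i\}$ over the remaining coordinates is precisely $A\cap F$, and $f|_F$ maps $F$ into the box $Y$; I must first check that $f(F)\subset$ the corresponding facet-slab so that I can invoke the inductive hypothesis — this follows because for $x\in F$ with, say, $x_i=0$, Lemma~\ref{c1pulling}(2) (reversed) or a direct continuity argument from the adjacent corner-carrying edges forces $p_i(f(x))$ to stay at $0$; more carefully, one runs the same one-dimensional pulling argument within $F$ in each of the other $n-1$ coordinate directions using that the corners of $F$ are fixed, concluding by the inductive hypothesis applied to the $(n-1)$-cube $F$ that $f|_F=\id_F$. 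With all facets pointwise fixed, the displayed contradiction above closes the argument, proving $X\subset\Fix(f)$.

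The second bullet follows quickly. Taking $Y=X$ in the first bullet shows that any $g\in C(X,c_1)$ with $A\subset\Fix(g)$ satisfies $g=\id_X$, so $A$ is a freezing set by Definition~\ref{freezeDef}. For minimality when $n=1$, this is the second part of Proposition~\ref{intervalFreeze}. For $n=2$, $A$ has four points, the corners of a rectangle; I would show that removing any one corner, say $(0,0)$, destroys the freezing property by exhibiting a $c_1$-continuous $g\neq\id_X$ fixing the other three corners — e.g. a retraction that collapses the row or column through $(0,0)$ onto an adjacent one, or more simply a map built from two one-dimensional retractions via Theorem~\ref{prodContinuity}; such a $g$ fixes $(m_1,0),(0,m_2),(m_1,m_2)$ but moves $(0,0)$, so no three-element subset of $A$ is a freezing set, and by symmetry $A$ is minimal.

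The main obstacle I anticipate is the inductive step for the facets: one must be careful that restricting $f$ to a facet $F$ lands inside an appropriate box so that the inductive hypothesis is applicable, and that the corner set of $F$ really is $A\cap F$ — these are bookkeeping points, but getting the slab-containment $f(F)\subset(\text{facet slab of }Y)$ requires invoking Lemma~\ref{c1pulling} in the direction normal to $F$ and is the one place where the argument is not purely formal. Everything else is a routine unwinding of the one-dimensional case coordinate by coordinate.
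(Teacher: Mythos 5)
Your overall strategy---induction on dimension with Proposition~\ref{intervalFreeze} as the base case---matches the paper's, but two of your steps have genuine gaps. First, the $n=2$ minimality witnesses you propose do not work. A retraction collapsing the row or column through $(0,0)$ onto an adjacent one moves $(0,m_2)$ or $(m_1,0)$, which are corners you must keep fixed. A product map $g_1\times g_2$ fixing the three corners $(m_1,0),(0,m_2),(m_1,m_2)$ forces $g_1$ to fix $\{0,m_1\}$ and $g_2$ to fix $\{0,m_2\}$, so by Proposition~\ref{intervalFreeze} both factors are identities and $g$ fixes $(0,0)$ after all. (Theorem~\ref{prodContinuity} also concerns $NP_2(c_1,c_1)=c_2$, not $c_1$.) The witness actually needed is non-product and moves a single point: $g(0,0)=(1,1)$ and $g(x)=x$ otherwise; this is $c_1$-continuous because the only $c_1$-neighbors of $(0,0)$ are $(1,0)$ and $(0,1)$, each $c_1$-adjacent to $(1,1)$.

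Second, in the inductive step you correctly identify the delicate point---that $f$ restricted to a facet $F$ must be shown to behave like a map of $(n-1)$-dimensional boxes---but your proposed repair fails: Lemma~\ref{c1pulling} applied ``in the direction normal to $F$'' requires a neighbor $q'$ of $q\in F$ with $p_i(q')<p_i(q)$ (or $>$), and no such point exists in $X$ on the far side of a facet, so it cannot pin down $p_i(f(x))$ for $x\in F$. The paper sidesteps this entirely: it fixes only the two facets $X_0,X_1$ normal to the last coordinate (via the inductive hypothesis and Theorem~\ref{freezeInvariant}) and then applies Proposition~\ref{uniqueShortest} to the axis-parallel segment joining $(x_1,\ldots,x_k,0)$ to $(x_1,\ldots,x_k,m_{k+1})$, which is the unique shortest $c_1$-path between two fixed points even in the larger box $Y$; this fills in all of $X$ at once and avoids both the all-facets bookkeeping and your pulling-lemma step for interior points (which is essentially Proposition~\ref{bdAndInt}). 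A small additional slip: Lemma~\ref{c1pulling} propagates displacement to the point \emph{behind} $q$, so $p_j(f(x))>x_j$ forces the point of the segment on the facet $p_j=0$ (not $p_j=m_j$) to be moved; the contradiction still arrives, but at the other end.
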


\begin{proof}
The first assertion has been established for 
$n=1$ at Proposition~\ref{intervalFreeze}.
We can regard this as a base case for an argument
based on induction on $n$, and we now assume 
the assertion is established for 
$n \le k$ where $k \ge 1$.

Now suppose $n=k+1$ and $f: X \to Y$ is $c_1$-continuous with $A \subset \Fix(f)$. Let
\[ X_0 = \Pi_{i=1}^k [0,m_i]_{\Z} \times \{0\}, ~~~ X_1 = \Pi_{i=1}^k [0,m_i]_{\Z} \times \{m_{k+1}\}.
\]
We have that $f|_{X_0}$ and $f|_{X_1}$ are $c_1$-continuous,
$A \cap X_0 \subset \Fix(f|_{X_0})$, and $A \cap X_1 \subset \Fix(f|_{X_1})$.
Since $X_0$ and $X_1$ are isomorphic to $k$-dimensional digital cubes,
by Theorem~\ref{freezeInvariant} and the inductive hypothesis, we have
\[ \left ( \Pi_{i=1}^k [0,m_i]_{\Z} \times \{0\} \right ) \cup
   \left ( \Pi_{i=1}^k [0,m_i]_{\Z} \times \{m_n\} \right )
   \subset \Fix(f).
\]
Then given $x = (x_1,\ldots,x_n) \in X$,
$x$ is a member of the unique shortest $c_1$-path
$\{(x_1,x_2,\ldots,x_k,t)\}_{t=0}^{m_1}$ from
$(x_1,x_2,\ldots,x_k,0) \in A$ to $(x_1,x_2,\ldots,x_k,m_n) \in A$.
By Proposition~\ref{uniqueShortest}, $x \in \Fix(f)$.
Since $x$ was taken arbitrarily, this completes the induction proof that 
$X \subset \Fix(f)$.

By taking $Y=X$ and applying the above to
all $f \in C(X,c_1)$ such that 
$A \subset \Fix(f)$, we conclude that
$A$ is a freezing set for $(X,c_1)$.

Minimality of $A$ for $n=1$ was established at
Proposition~\ref{intervalFreeze}. To show
minimality of $A$ for $n=2$, consider a
proper subset $A'$ of $A$. Without loss of
generality, $(0, 0) \in A \setminus A'$, $m_1 > 0$, and $m_2>0$. For
$x \in X$, let $g: X \to X$ be the function
\[ g(x)= \left \{ \begin{array}{ll}
     x & \mbox{if } x \neq (0,0); \\
     (1,1) &  \mbox{if } x = (0,0).
   \end{array} \right .
\]
Suppose $y \in X$ is such that $y \adj_{c_1} (0,0)$. Then $y=(1,0)$ or
$y = (0,1)$, hence
\[ g(y)=y \adj_{c_1} (1,1) =g(0,0).
\]
Thus $g \in C(X,c_1)$, $A' \subset \Fix(g)$,
and $g \neq \id_X$. Therefore, $A'$ is not
a freezing set for $(X,c_1)$, so $A$ is minimal.
\end{proof}

The minimality assertion of
Theorem~\ref{corners-min} does not extend
to $n=3$, as shown in the following.

\begin{exl}
\label{3cubeExl}
Let $X = [0,1]_{\Z}^3$. Let
\[ A = \{(0,0,0), (0,1,1), (1,0,1), (1,1,0)\}.
\]
See Figure~3. Then $A$ is a minimal freezing set for $(X,c_1)$.
\end{exl}

\begin{proof}
Note if $x \in X \setminus A$ then for each
index $i \in \{1,2,3\}$, $x$ is $c_1$-adjacent
to $y_i \in A$ such that $x$ and $y_i$ differ
in the $i^{th}$ coordinate. Therefore, if
$f \in C(X,c_1)$ such that $f(x) \neq x$, then
$c_1$-continuity requires that for some $i$ we 
have $f(y_i) \neq y_i$. It follows that $A$
is a freezing set for $(X,c_1)$.

Minimality is shown as follows. Let $A'$ be
a proper subset of $A$. Without 
loss of generality, $(0,0,0) \in A \setminus A'$.
Let $g: X \to X$ be the function (see 
Figure~3)
\[ g(x) = \left \{ \begin{array}{ll}
   (1,1,0) & \mbox{if } x=(0,0,0); \\
   (1,1,1) & \mbox{if } x=(0,0,1); \\
   x    & \mbox{otherwise.}
\end{array} \right .
\]
\begin{figure}
\begin{center}
\includegraphics[height=1.5in]{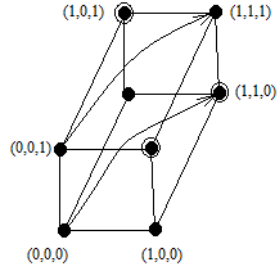}
\end{center}
\label{3cube}
\caption{The function $g$ in the proof of
Example~\ref{3cubeExl}. Members of
$A \setminus \{(0,0,0)\}$ are circled.
Straight line segments indicate $c_1$ adjacencies.
Curved arrows show the mapping for points in
$X \setminus \Fix(g)$.
}
\end{figure}
Then $g \in C(X,c_1)$, $g|_{A'} = \id_{A'}$,
and $g \neq \id_X$. Therefore, $A'$ is not a
freezing set for $(X,c_1)$.
\end{proof}

\subsection{Digital cubes and $c_n$}
In this section, we consider freezing sets for digital cubes in
$\Z^n$, using the $c_n$ adjacency.

\begin{thm}
\label{noProperSub}
Let $X = \prod_{i=1}^n[0,m_i]_{\Z}  \subset \Z^n$, where $m_i>1$ for all~$i$.
Then $Bd(X)$ is a minimal freezing set for $(X,c_n)$.
\end{thm}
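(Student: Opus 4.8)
The plan is to establish two things: first, that $Bd(X)$ is a freezing set for $(X,c_n)$, and second, that no proper subset of $Bd(X)$ is a freezing set. For the first part, I would like to imitate the strategy of Proposition~\ref{bdAndInt}, but note that Proposition~\ref{bdAndInt} is stated only for the $c_u$-adjacencies and the argument there relied on Lemma~\ref{c1pulling}, which also covers all $c_u$. So in fact Proposition~\ref{bdAndInt} with $u=n$, combined with Theorem~\ref{bdFreezes} (taking $u=n$ and $A=X$), already gives immediately that $Bd(X)$ is a freezing set for $(X,c_n)$. That disposes of the "freezing set" claim with essentially no new work; the content of the theorem is the minimality.

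For minimality, let $A' = Bd(X) \setminus \{p\}$ for an arbitrary $p \in Bd(X)$; I must produce a continuous non-identity $g \in C(X,c_n)$ with $A' \subset \Fix(g)$. The idea is to move only the single point $p$ (and possibly a few interior points "behind" it) by pushing $p$ diagonally inward. Concretely, since $p \in Bd(X)$, some coordinate $p_j$ of $p$ equals $0$ or $m_j$; say $p_j = 0$ (the other cases are symmetric). Define $g$ to fix every point of $X$ except $p$, and send $p$ to the point $p'$ obtained by replacing the $j$-th coordinate $0$ by $1$. Because the adjacency is $c_n$ — the "king-move" adjacency — a point $q \ne p$ that is $c_n$-adjacent to $p$ is also $c_n$-adjacent or equal to $p'$ (moving one coordinate by $1$ keeps us within $\ell^\infty$-distance $1$ of $q$, using $m_j > 1$ so that $p'$ is still in $X$ and the construction does not collide with the opposite face). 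One must check continuity at the edges incident to $p$: for each neighbor $q$ of $p$, verify $g(q) = q \adjeq_{c_n} p' = g(p)$; this is the one routine but essential computation, and it is where the hypothesis $m_i > 1$ for all $i$ is used (it guarantees $p' \in X$ and that $p'$ is not forced onto another boundary point in a way that breaks the argument). Then $g \in C(X,c_n)$, $g|_{A'} = \id_{A'}$, and $g \ne \id_X$, so $A'$ is not a freezing set.

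The main obstacle I anticipate is making the continuity check at $p$ fully correct in all boundary cases — in particular when $p$ is a corner or lies on a lower-dimensional face, so that $p$ has many neighbors, and when $p'$ itself may land in $Bd(X)$. I would handle this by choosing the pushing direction $j$ to be one in which $p$ is extreme (which always exists since $p \in Bd(X)$) and pushing strictly inward along that one axis only; then every $c_n$-neighbor $q$ of $p$ satisfies $|p_i' - q_i| \le 1$ for all $i$ because $q$ already satisfied $|p_i - q_i| \le 1$ and we changed a single coordinate of $p$ by exactly $1$ toward the interior, so $q$'s $j$-th coordinate (which is $0$ or $1$) stays within $1$ of $p_j' = 1$. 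Thus $g(q) = q \adjeq_{c_n} p'$, giving continuity, and $g$ is the desired non-identity self-map fixing $A'$. Combining the two parts, $Bd(X)$ is a minimal freezing set for $(X,c_n)$.
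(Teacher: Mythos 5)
Your proposal is correct and follows essentially the same route as the paper: the freezing property is quoted from Theorem~\ref{bdFreezes}, and minimality is shown by taking a boundary point $y$ omitted from the proposed subset, pushing it one unit inward along a coordinate $j$ with $y_j\in\{0,m_j\}$ while fixing all other points, and checking $c_n$-continuity of the resulting non-identity map. Your continuity verification (each $c_n$-neighbor of $y$ still differs from the image point by at most $1$ in every coordinate) is in fact spelled out more explicitly than in the paper's proof.
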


\begin{proof}
That $Bd(X)$ is a freezing set for $(X,c_n)$ follows from Theorem~\ref{bdFreezes}.

To show $Bd(X)$ is a minimal freezing set, it suffices to show that if
$A$ is a proper subset of $Bd(X)$ then $A$ is not a freezing set for $(X,c_n)$.
We must show that there exists
\begin{equation}
\label{notFreeze}
f \in C(X,c_n) \mbox{ such that } f|_A = \id_A \mbox{ but }f \neq \id_X.
\end{equation}
By hypothesis, there exists
$y=(y_1,\ldots,y_n) \in Bd(X) \setminus A$.

Since $y \in Bd(X)$, for some index~$j$,
$y_j \in \{0,m_j\}$.
\begin{itemize}
    \item If $y_j=0$
the function $f: X \to X$ defined by \[f(y)=(y_1,\ldots,y_{j-1}, 1, y_{j+1},\ldots,y_n),~~~ f(x)=x \mbox{ for } x \neq y,
\]
satisfies~(\ref{notFreeze}).
\item If $y_j=m_j$
the function $f: X \to X$ defined by \[f(y)=(y_1,\ldots,y_{j-1}, m_j-1, y_{j+1},\ldots,y_n),~~~ f(x)=x \mbox{ for } x \neq y,
\]
satisfies~(\ref{notFreeze}).
\end{itemize}
The assertion follows.
\end{proof}

\subsection{Freezing sets and the normal product adjacency}
In the following,
$p_j: \prod_{i=1}^v X_i \to X_j$ is the map
\[ p_j(x_1, \ldots, x_v) = x_j \mbox{ where } x_i \in X_i.
\]

\begin{thm}
\label{prodFreezing}
Let $(X_i,\kappa_i)$ be a
digital image, $i \in [1,v]_{\Z}$. Let 
$X = \prod_{i=1}^v X_i$.
Let $A \subset X$. Suppose
$A$ is a freezing set
for $(X,NP_v(\kappa_1,\ldots,\kappa_v))$. Then
for each $i \in [1,v]_{\Z}$,
$p_i(A)$ is a freezing set
for $(X_i,\kappa_i)$.
\end{thm}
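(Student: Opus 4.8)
The plan is to fix an index $i$ and show $p_i(A)$ is a freezing set for $(X_i,\kappa_i)$ directly from Definition~\ref{freezeDef}: given $g_i \in C(X_i,\kappa_i)$ with $p_i(A) \subset \Fix(g_i)$, we must conclude $g_i = \id_{X_i}$. The natural move is to lift $g_i$ to a self-map of the product by acting as the identity on every other factor. Precisely, define $g: X \to X$ by $g(x_1,\ldots,x_v) = (x_1,\ldots,x_{i-1}, g_i(x_i), x_{i+1},\ldots,x_v)$; this is the product map $\prod_{k=1}^v f_k$ with $f_i = g_i$ and $f_k = \id_{X_k}$ for $k \neq i$. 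By Theorem~\ref{prodContinuity}, $g$ is $(NP_v(\kappa_1,\ldots,\kappa_v),NP_v(\kappa_1,\ldots,\kappa_v))$-continuous since each coordinate map is continuous.

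Next I would check the fixed-point hypothesis lifts. Let $a = (a_1,\ldots,a_v) \in A$. Then $p_i(a) = a_i \in p_i(A) \subset \Fix(g_i)$, so $g_i(a_i) = a_i$; since $g$ acts as the identity in all other coordinates, $g(a) = a$. Hence $A \subset \Fix(g)$. Because $A$ is a freezing set for $(X, NP_v(\kappa_1,\ldots,\kappa_v))$, we get $g = \id_X$. Now read off the $i$th coordinate: for every $x_i \in X_i$, pick any point $x \in X$ with $p_i(x) = x_i$ (possible since all $X_k$ are nonempty — implicitly true as they are digital images, and one should perhaps note this), and $g(x) = x$ forces $g_i(x_i) = p_i(g(x)) = p_i(x) = x_i$. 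Thus $g_i = \id_{X_i}$, and by Definition~\ref{freezeDef}, $p_i(A)$ is a freezing set for $(X_i,\kappa_i)$.

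The argument is essentially a packaging exercise, so I do not anticipate a genuine obstacle; the one point requiring a moment's care is the invocation of Theorem~\ref{prodContinuity}, which is stated for a product map $\prod f_k : \prod X_k \to \prod Y_k$ with possibly different target images — here I specialize $Y_k = X_k$, $\lambda_k = \kappa_k$, and use that the identity map $\id_{X_k}$ is trivially $(\kappa_k,\kappa_k)$-continuous, so all hypotheses of that theorem are met. A secondary small point is ensuring each $X_k$ is nonempty so that the "pick any $x$ with $p_i(x) = x_i$" step is legitimate; this is automatic for digital images in the sense used here (a digital image $(X_k,\kappa_k)$ has $X_k \subset \Z^{n_k}$, and the product construction is only meaningful when each factor is nonempty), but it is worth a parenthetical remark. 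Everything else — continuity of $g$, the fixed-point bookkeeping, and extracting $g_i = \id$ — is routine.
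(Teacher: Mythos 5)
Your proof is correct and takes essentially the same approach as the paper: lift $g_i$ to a product self-map of $X$ via Theorem~\ref{prodContinuity}, check it fixes $A$, invoke the freezing property of $A$, and project back to the $i$th factor. The only cosmetic difference is that you set the off-coordinate maps to identities while the paper keeps them as arbitrary continuous maps fixing $p_j(A)$; your version is, if anything, slightly cleaner in its logical bookkeeping.
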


\begin{proof}
Let $f_i \in C(X_i,\kappa_i)$.
Let $F: X \to X$ be
defined by
\[ F(x_1,\ldots,x_v) =
   (f_1(x_1), \ldots, f_v(x_v)).
\]
Then by Theorem~\ref{prodContinuity}, 
$F \in C(X,NP_v(\kappa_1,\ldots, \kappa_v))$.

Suppose for all 
$a = (a_1,\ldots,a_v) \in A$, $F(a)=a$,
hence $f_i(a_i)=a_i$ for all 
$a_i \in p_i(A)$.
Since $A$ is a freezing set of $X$, we
have that $F = \id_X$, and therefore,
$f_i=\id_{X_i}$. The assertion follows.
\end{proof}

\subsection{Cycles}
A {\em cycle} or {\em digital simple closed curve} of $n$ distinct points
is a digital image $(C_n,\kappa)$
with $C_n = \{x_i\}_{i=0}^{n-1}$
such that $x_i \adj_{\kappa} x_j$
if and only if
$j=i+1 \mod n$ or
$j=i-1 \mod n$.

Given indices $i<j$, there are two
distinct paths determined by $x_i$ and $x_j$ in $C_n$,
consisting of the sets $P_{i,j}=\{x_k\}_{k=i}^j$ and
$P_{i,j}'= C_n \setminus \{x_k\}_{k=i+1}^{j-1}$. 
If one of these has length less than $n/2$, it is
the {\em shorter path} from $p_i$ to $p_j$ and
the other is the {\em longer path}; otherwise,
both have length $n/2$, and each is a 
{\em shorter path} and a {\em longer path}
from $p_i$ to $p_j$.

In this section, we consider
minimal fixed point sets for
$f \in C(C_n)$ that force
$f$ to be an identity map.

\begin{thm}
\label{3ptsForCycles}
Let $n > 4$. Let $x_i,x_j,x_k$ be
distinct members of $C_n$ be such
that $C_n$ is a union of unique shorter paths
determined by these points.
Let $f \in C(C_n,\kappa)$.
Then $f=\id_{C_n}$ if and only if
$\{x_i,x_j,x_k\} \subset \Fix(f)$; i.e.,
$\{x_i,x_j,x_k\}$ is a freezing set for $C_n$.
Further, this freezing set is minimal.
\end{thm}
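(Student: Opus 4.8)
The statement has two halves: (i) that $\{x_i,x_j,x_k\}$ is a freezing set, i.e.\ any $f\in C(C_n,\kappa)$ fixing these three points is $\id_{C_n}$; and (ii) that this freezing set is minimal, i.e.\ no two of the three points suffice. I would prove (i) directly using Proposition~\ref{uniqueShortest}, and (ii) by exhibiting an explicit non-identity continuous self-map fixing any chosen pair.

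For (i), the hypothesis says that the three shorter paths $P$, $Q$, $R$ determined by $x_i,x_j,x_k$ are each \emph{unique} as shortest paths and together cover $C_n$. Since $f$ fixes $x_i$ and $x_j$, and $P$ is the unique shortest $\kappa$-path between them, Proposition~\ref{uniqueShortest} gives $P\subset\Fix(f)$; applying the same to the pairs $\{x_j,x_k\}$ and $\{x_i,x_k\}$ gives $Q\subset\Fix(f)$ and $R\subset\Fix(f)$. Hence $C_n = P\cup Q\cup R\subset\Fix(f)$, so $f=\id_{C_n}$. The only subtlety is making sure the hypothesis ``$C_n$ is a union of unique shorter paths determined by these points'' is exactly what licenses all three applications of Proposition~\ref{uniqueShortest}; I would state explicitly that uniqueness of the shorter path between $x_a$ and $x_b$ holds precisely when the two arcs between them have different lengths (equivalently the shorter one has length $<n/2$), and that $n>4$ plus the covering condition forces each of the three relevant arcs to be of this strictly-shorter type. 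Conversely, if $\{x_i,x_j,x_k\}\subset\Fix(f)$ implies $f=\id$ then trivially $f=\id$ gives $\{x_i,x_j,x_k\}\subset\Fix(f)$, so the ``if and only if'' is immediate.

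For (ii), minimality, I would take any two-element subset, say $\{x_i,x_j\}$ (the other cases are symmetric), and build a retraction-type map of $C_n$ onto one of the two arcs between $x_i$ and $x_j$: fold the longer arc back onto the shorter one (or onto the union of the shorter arc with the two endpoints), sending each vertex of the longer arc to the nearest vertex of the shorter arc along the cycle. This map is $\kappa$-continuous because adjacent vertices map to equal or adjacent vertices under such a fold, it fixes $x_i$ and $x_j$ (indeed it fixes the whole shorter arc), and it is not $\id_{C_n}$ since $n>4$ guarantees the longer arc has an interior vertex that gets moved. Because every proper subset of $\{x_i,x_j,x_k\}$ is contained in some such pair, no proper subset is a freezing set, establishing minimality.

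The main obstacle I anticipate is purely bookkeeping in part (ii): one must choose the fold so that it is genuinely continuous at the ``seam'' vertices $x_i$ and $x_j$ and so that it is well-defined when the longer arc has odd versus even length (in the even case the midpoint vertex can be sent to either of its neighbors on the shorter side). A clean way to sidestep case analysis is to define $g$ on $C_n=\{x_0,\dots,x_{n-1}\}$ after relabelling so that $x_i=x_0$, $x_j=x_s$ with $P_{0,s}=\{x_0,\dots,x_s\}$ the shorter arc, and set $g(x_t)=x_t$ for $0\le t\le s$ and $g(x_t)=x_{\,2s-t \bmod n}$ (reflecting across $x_s$, then clamping) for $s<t<n$; then verify continuity by checking consecutive pairs. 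This is elementary once set up, so I expect part (i) to be the conceptual core and part (ii) to be a short explicit construction.
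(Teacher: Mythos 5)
Your proposal is correct and takes essentially the same route as the paper: part (i) is exactly the paper's argument (three applications of Proposition~\ref{uniqueShortest} to the pairs, plus the covering hypothesis), and for minimality the paper simply asserts the existence of a non-identity continuous map carrying the longer arc determined by two of the points onto the shorter arc, which your reflection-and-clamp formula makes explicit. One small caution: your first informal description of the fold (``send each vertex of the longer arc to the nearest vertex of the shorter arc'') is \emph{not} continuous when the shorter arc has length at least~2, since the two middle vertices of the longer arc would map to the non-adjacent endpoints $x_i$ and $x_j$; the reflection formula you give afterwards is the correct version.
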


\begin{proof}
Clearly $f=\id_{C_n}$ implies
$\{x_i,x_j,x_k\} \subset \Fix(f)$.

Suppose $\{x_i,x_j,x_k\} \subset \Fix(f)$. By hypothesis, there
are unique shorter paths $P_0$ 
from $x_i$ to $x_j$, $P_1$ from $x_j$ to $x_k$, and
$P_2$ from $x_k$ to $x_i$, in $C_n$. By
Proposition~\ref{uniqueShortest},
each of $P_0$, $P_1$, and
$P_2$ is contained in $\Fix(f)$.
By hypothesis $C_n = P_0 \cup P_1 \cup P_2$,
so $f = \id_{C_n}$. Hence $\{x_i,x_j,x_k\}$ is
a freezing set.

For any distinct pair $x_i,x_j \in C_n$,
there is a non-identity continuous self-map on 
$C_n$ that takes a longer path determined by
$x_i$ and $x_j$ to a shorter path determined by
$x_i$ and $x_j$. Thus, $\{x_i,x_j\}$ is not
a freezing set for $C_n$, so the set
$\{x_i,x_j,x_k\}$ discussed above is minimal.
\end{proof}

\begin{remark}
{\rm
In Theorem~\ref{3ptsForCycles},
we need the assumption that
$n>4$, as there is a continuous
self-map~$f$ on $C_4$ with 3
fixed points such that
$f \neq \id_{C_4}$~\cite{bs19a}.
}
\end{remark}

\subsection{Wedges}
Let $(X,\kappa) \subset \Z^n$ be such that
$X = X_0 \cup X_1$, where
$X_0 \cap X_1 = \{x_0\}$; and
if $x \in X_0$, $y \in X_1$, and
$x \adj_{\kappa} y$, then
$x_0 \in \{x,y\}$. We
say $X$ is the
{\em wedge of $X_0$ and $X_1$},
denoted $X = X_0 \vee X_1$. We
say $x_0$ is the {\em wedge point}.

\begin{thm}
Let $A$ be a freezing set for
$(X,\kappa)$, where 
$X = X_0 \vee X_1 \subset \Z^n$, $\#X_0 > 1$,
and $\#X_1 > 1$. 
Let $X_0 \cap X_1 = \{x_0\}$.
Then $A$ must include points of
$X_0 \setminus \{x_0\}$ and
$X_1 \setminus \{x_0\}$.
\end{thm}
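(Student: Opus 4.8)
The plan is to argue by contrapositive: suppose $A$ misses all of $X_1 \setminus \{x_0\}$, i.e., $A \subseteq X_0$ (note $A$ may or may not contain $x_0$), and exhibit a continuous non-identity self-map $g$ of $X$ with $A \subseteq \Fix(g)$; this contradicts $A$ being a freezing set. By the wedge symmetry, the same construction with the roles of $X_0$ and $X_1$ swapped handles the case $A \subseteq X_1$. So it suffices to construct, from the data $\#X_1 > 1$, a non-identity $g \in C(X,\kappa)$ that fixes all of $X_0$.

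The natural candidate is a "collapse $X_1$ onto the wedge point" map: define $g(x) = x$ for $x \in X_0$ and $g(x) = x_0$ for $x \in X_1$. First I would check this is well-defined (the two clauses agree on $X_0 \cap X_1 = \{x_0\}$, where both give $x_0$). Then I would verify continuity using Theorem~1.8 (the adjacency characterization): take $x \adj_\kappa y$ in $X$. If both lie in $X_0$, then $g(x) = x \adjeq_\kappa y = g(y)$. If both lie in $X_1$, then $g(x) = x_0 = g(y)$, which is fine. The only remaining case is $x \in X_0 \setminus \{x_0\}$ and $y \in X_1 \setminus \{x_0\}$ (or vice versa) with $x \adj_\kappa y$ — but the defining property of the wedge says any such adjacency forces $x_0 \in \{x,y\}$, so this case is vacuous. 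Hence $g$ is continuous. Since $\#X_1 > 1$, there is a point $z \in X_1 \setminus \{x_0\}$, and $g(z) = x_0 \neq z$, so $g \neq \id_X$. Finally $A \subseteq X_0 = \Fix(g) \cap \dots$ — more precisely $X_0 \subseteq \Fix(g)$ by construction, so $A \subseteq \Fix(g)$. This shows $A$ is not a freezing set, completing the contrapositive. By the identical argument with indices $0$ and $1$ interchanged (using $\#X_0 > 1$), $A$ cannot be contained in $X_1$ either. Therefore a freezing set must contain a point of $X_0 \setminus \{x_0\}$ and a point of $X_1 \setminus \{x_0\}$.

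The one subtlety worth stating carefully is that "$A$ must include points of $X_0 \setminus \{x_0\}$" is precisely the negation of "$A \subseteq X_1$" together with "$A \neq \emptyset$ relative to the relevant side"; I would phrase the logical structure cleanly so that the two collapse maps cover exactly the two failure modes. I do not expect a genuine obstacle here — the wedge condition is tailor-made to make the collapse map continuous — but the step requiring the most care is the continuity verification, specifically the observation that the "cross" adjacencies between $X_0 \setminus \{x_0\}$ and $X_1 \setminus \{x_0\}$ simply do not exist, which is where the hypothesis that $X$ is genuinely a wedge (not just a union) gets used.
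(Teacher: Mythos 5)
Your proposal is correct and is essentially the paper's own argument: the paper likewise assumes $A \subseteq X_0$ (or $X_1$) and uses the same collapse-$X_1$-to-$x_0$ map, noting it is continuous and fixes $A$ but is not the identity. Your more careful continuity check via the wedge adjacency condition is just an expansion of what the paper leaves implicit.
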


\begin{proof}
Otherwise, either $A \subset X_0$ 
or $A \subset X_1$.

Suppose $A \subset X_0$. Then
the function $f: X \to X$ given
by
\[ f(x) = \left \{ \begin{array}{ll}
    x & \mbox{if } x \in X_0; \\
    x_0 & \mbox{if } x \in X_1,
\end{array} \right .
\]
belongs to $C(X,\kappa)$ and
satisfies $f|_A = \id_A$, but
$f \neq \id_X$. Thus $A$ is not a
freezing set for $(X,\kappa)$.

The case $A \subset X_1$ is argued
similarly.
\end{proof}

\begin{exl}
The wedge of two digital intervals
is (isomorphic to) a digital interval. It follows from
Theorem~\ref{freezeInvariant} and
Proposition~\ref{intervalFreeze}
that a freezing set
for a wedge need not include the wedge point.
\end{exl}

\begin{thm}
Let $C_m$ and $C_n$ be cycles,
with $m>4$, $n>4$. Let
$x_0$ be the wedge point of
$X=C_m \vee C_n$. Let
$x_i,x_j \in C_m$ and 
$x_i',x_j' \in C_n$ be such that
$C_m$ is the union of unique 
shorter paths determined by
$x_i, x_j, x_0$ and $C_n$ is the union of unique 
shorter paths determined by
$x_k', x_p', x_0$. Then
$A=\{x_i, x_j, x_k', x_p'\}$
is a freezing set for
$X$.
\end{thm}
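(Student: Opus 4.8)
The plan is to let $f \in C(X,\kappa)$ with $A = \{x_i, x_j, x_k', x_p'\} \subset \Fix(f)$ and show $f = \id_X$. The natural strategy is to restrict $f$ separately to the two cycles $C_m$ and $C_n$, apply Theorem~\ref{3ptsForCycles} to each, and glue the conclusions at the wedge point. The main issue is that Theorem~\ref{3ptsForCycles} requires \emph{three} fixed points on each cycle, but $A$ only supplies two on each; the third fixed point on each cycle will have to be $x_0$, so the crux of the argument is to first establish $f(x_0) = x_0$.

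First I would show $x_0 \in \Fix(f)$. Consider the restriction $f|_{C_m}$. The point $x_0$ lies on a unique shorter path $P$ determined by $x_i$ and $x_j$ inside $C_m$ (this follows from the hypothesis that $C_m$ is the union of the unique shorter paths determined by $x_i, x_j, x_0$, which forces $x_0$ to lie on the shorter $x_i$--$x_j$ path; the other two shorter paths $x_i$--$x_0$ and $x_j$--$x_0$ make up the rest of $C_m$). However, $f$ need not map $C_m$ into $C_m$, so I cannot directly invoke Proposition~\ref{uniqueShortest} on the subimage $C_m$. Instead I would apply Proposition~\ref{uniqueShortest} in $X$: since $x_i, x_j \in \Fix(f)$ and the shorter $x_i$--$x_j$ path $P$ inside $C_m$ is also the \emph{unique} shortest $\kappa$-path from $x_i$ to $x_j$ in all of $X$ — here is where the wedge structure is used, since any path in $X$ between two points of $C_m \setminus \{x_0\}$ either stays in $C_m$ or passes through $x_0$, and a path detouring into $C_n$ is strictly longer — we get $P \subset \Fix(f)$, and in particular $x_0 \in \Fix(f)$. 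The hypothesis $n > 4$ and $m > 4$ guarantees the relevant uniqueness/shortness comparisons behave as in Theorem~\ref{3ptsForCycles}; I expect verifying ``$P$ is the unique shortest path in $X$, not merely in $C_m$'' to be the one genuinely delicate point, and it should be handled by a short case analysis on where a competing path can go.

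Once $x_0 \in \Fix(f)$, I would argue that $f(C_m) \subset C_m$ and $f(C_n) \subset C_n$. Indeed, $C_m = P_0 \cup P_1 \cup P_2$ where $P_0, P_1, P_2$ are the unique shorter paths determined by $x_i, x_j, x_0$; by Proposition~\ref{uniqueShortest} applied in $X$ (using that these are unique shortest paths in $X$ by the same wedge argument as above, with endpoints in $\{x_i, x_j, x_0\} \subset \Fix(f)$), each $P_\ell \subset \Fix(f)$, so $f|_{C_m} = \id_{C_m}$; symmetrically $f|_{C_n} = \id_{C_n}$. Since $X = C_m \cup C_n$, this gives $f = \id_X$. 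Thus $A$ is a freezing set for $X$. (In fact this route makes the appeal to Theorem~\ref{3ptsForCycles} unnecessary — the uniqueness-of-shortest-paths hypotheses do all the work directly — but the statement of that theorem is the conceptual source of the argument and can be cited for the single-cycle step if one prefers to first transfer $f|_{C_m}$ to a self-map of $C_m$ after knowing it fixes $x_0$ and maps $C_m$ into itself.)
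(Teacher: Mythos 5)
There is a genuine gap, and it is in the very first step. You propose to obtain $x_0 \in \Fix(f)$ by applying Proposition~\ref{uniqueShortest} to the unique shorter path $P$ in $C_m$ from $x_i$ to $x_j$, claiming that the hypothesis ``$C_m$ is the union of the unique shorter paths determined by $x_i, x_j, x_0$'' forces $x_0$ to lie on $P$. It forces exactly the opposite. The three points cut $C_m$ into three arcs, and the hypothesis says each arc is the unique shorter path between its endpoints: if instead the shorter $x_i$--$x_j$ path passed through $x_0$, then the union of the three shorter paths would omit the interior of the arc from $x_i$ to $x_j$ avoiding $x_0$ and so could not be all of $C_m$. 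Hence the unique shorter path from $x_i$ to $x_j$ is the arc \emph{not} containing $x_0$; the point $x_0$ sits on the longer path. Proposition~\ref{uniqueShortest} applied to the pair $x_i, x_j$ therefore fixes only that far arc and says nothing about $x_0$. Since $A$ supplies only two fixed points on each cycle, your plan of reducing to Theorem~\ref{3ptsForCycles} (or, equivalently, to three fixed points per cycle) never gets off the ground.

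The paper's proof obtains $x_0 \in \Fix(f)$ by the one move your argument is missing: it pairs a fixed point of $C_m$ with a fixed point of $C_n$. Any path in $X$ from $x_j$ to $x_p'$ must pass through the wedge point, and the concatenation of the shorter $x_j$--$x_0$ arc in $C_m$ with the shorter $x_0$--$x_p'$ arc in $C_n$ is the unique shortest such path in $X$; Proposition~\ref{uniqueShortest} then freezes that entire path, including $x_0$. Doing the same for $x_i$ and $x_k'$, and adding the shorter $x_i$--$x_j$ and $x_k'$--$x_p'$ arcs, covers all of $X$. Your second stage (once $x_0$ is known to be fixed, each arc of each cycle is a unique shortest path in $X$ between fixed endpoints, so everything is frozen) is sound, but the route by which you reach $x_0$ does not work and must be replaced by a cross-cycle path of the above kind.
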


\begin{proof}
Let $f \in C(X,\kappa)$ be such
that $A \subset \Fix(f)$. Let
$P_0$ be the unique shorter path in
$C_m$ from $x_i$ to $x_j$; let
$P_1$ be the unique shorter path in
$C_m$ from $x_j$ to $x_0$; let
$P_2$ be the unique shorter path in
$C_m$ from $x_0$ to $x_i$; let
$P_0'$ be the unique shorter 
path in $C_n$ from $x_k'$ to $x_p'$; let
$P_1'$ be the unique shorter 
path in $C_n$ from $x_p'$ to $x_0$;
let $P_2'$ be the unique shorter 
path in $C_n$ from $x_0$ to $x_k'$.

By Proposition~\ref{uniqueShortest},
each of the following paths is
contained in $\Fix(f)$: $P_0$,
$P_1 \cup P_1'$ (from $x_j$ to $x_0$ to $x_p'$), $P_2 \cup P_2'$ (from $x_i$ to $x_0$ to $x_k'$), and
$P_0'$. Since
\[X = P_0 \cup (P_1 \cup P_1') \cup 
  (P_2 \cup P_2') \cup P_0' \subset
  \Fix(f),
\]
the assertion follows.
\end{proof}

\subsection{Trees}
A {\em tree} is an acyclic graph $(X,\kappa)$ that is connected, i.e., lacking any 
subgraph isomorphic to $C_n$ for $n>2$. The {\em degree} of a vertex $x$
in $X$ is the number of distinct vertices $y \in X$ such that $x \adj y$. 
A vertex of a tree may be designated as the {\em root}. 
We have the following.

\begin{lem}
\label{tree-lem}
{\rm \cite{bs19a}}
Let $(X, \kappa)$ be a digital image that is a tree in which the root vertex has
at least 2 child vertices. Then $f \in C(X, \kappa)$
implies $\Fix(f)$ is $\kappa$-connected.
\end{lem}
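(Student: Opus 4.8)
The plan is to argue by contradiction, exploiting the tree structure together with the hypothesis that the root $r$ has at least two children. Suppose $f \in C(X,\kappa)$ but $\Fix(f)$ is not $\kappa$-connected. Then there exist $a, b \in \Fix(f)$ lying in distinct components of $\Fix(f)$. Since $X$ is a tree, there is a unique path $P$ in $X$ from $a$ to $b$; because $a$ and $b$ are in different components of $\Fix(f)$, this path must contain some vertex $c$ with $f(c) \neq c$. First I would record the basic fact that in a tree, for any two vertices the shortest path between them is \emph{unique}, so Proposition~\ref{uniqueShortest} applies to any pair of fixed points: if $x, y \in \Fix(f)$ then the whole unique path from $x$ to $y$ lies in $\Fix(f)$. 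This already shows that $\Fix(f)$, if nonempty, is "path-convex" in the tree — any two of its points are joined by a path inside it — and hence $\Fix(f)$ is $\kappa$-connected \emph{as soon as it is nonempty}.

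So the real content reduces to showing $\Fix(f) \neq \emptyset$; this is where the hypothesis on the root having $\ge 2$ children must be used (the statement would fail, e.g., for a path, which is a tree whose "root" can be taken with one child, and indeed admits fixed-point-free self-maps only if... — actually a path has the FPP-like behavior; the genuine obstruction is really the emptiness, so I need to be careful). Let me reconsider: a digital interval $[0,m]_{\Z}$ is a tree, every self-map fixes a point? No — $f(x) = x+1$ truncated is not continuous appropriately; in fact every $c_1$-continuous self-map of $[0,m]_{\Z}$ does have a fixed point by a discrete intermediate value argument. But $C_n$ is excluded since it is not a tree. The point of the "two children at the root" hypothesis is presumably to rule out a star-with-a-tail type rotation; I would look for the $f$ with $\Fix(f)=\emptyset$ on some tree to see exactly what structure it needs, then argue that structure is incompatible with a root of degree (effectively) $\ge 2$ on both sides.

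The key steps, in order, would be: (1) establish uniqueness of shortest paths in a tree, and invoke Proposition~\ref{uniqueShortest} to conclude $\Fix(f)$ is connected whenever it is nonempty; (2) assume for contradiction $\Fix(f) = \emptyset$, and define, for each $x \in X$, the first edge of the unique path from $x$ toward $f(x)$, giving a "direction" function; (3) use continuity of $f$ (adjacent points map to adjacent-or-equal points) to show these directions are consistent along edges, forcing a globally coherent orientation of the tree pointing "away from" a single end — but a finite tree has no such orientation unless it has a vertex all of whose incident edges point inward, which would be a fixed point, contradiction; (4) at the root with two children, show the two subtrees force conflicting directions at $r$, sealing the contradiction.

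I expect the main obstacle to be step (2)–(3): making the "direction" argument precise. The subtlety is that $f(c) \neq c$ does not by itself say $f$ moves $c$ "in a fixed direction" in a useful sense, and one must leverage $c_1$-style continuity carefully — a digitally continuous map can move a point to a far-away image. The clean way is probably to pick $x$ maximizing (or minimizing) the distance $d(x, f(x))$ in the tree metric, or to track a monotone quantity such as $d(r, f(x)) - d(r,x)$ along a path and derive a contradiction from the two-children structure at $r$; alternatively, one may directly quote or re-derive a lemma from \cite{bs19a} showing trees with such roots have the property that $\id$ cannot be homotoped to a fixed-point-free map and adapt it. I would aim to keep the write-up short by isolating the path-convexity observation (which handles connectedness cheaply) and spending the bulk of the effort on the nonemptiness-of-$\Fix(f)$ claim, since that is the genuinely tree-specific part.
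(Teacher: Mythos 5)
First, note that this paper does not actually prove Lemma~\ref{tree-lem}: it is quoted from \cite{bs19a}, so there is no in-paper argument to compare against. That said, your first paragraph already contains a complete proof of the assertion. In a tree any two vertices are joined by a unique simple path, which is therefore the unique shortest $\kappa$-path between them; hence Proposition~\ref{uniqueShortest} applies to every pair of points of $\Fix(f)$ and shows the entire connecting path lies in $\Fix(f)$. A subset of a graph containing a connecting path for each pair of its points is connected, and the cases $\#\Fix(f)\le 1$ are trivial. You should stop there; that is the whole argument.

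The remainder of the proposal pursues a claim that is false, so your steps (2)--(4) cannot be carried out. Having a root with at least two children does \emph{not} force $\Fix(f)\neq\emptyset$. Take $X=\{c,x,y,z\}$ to be the star with center $c$ adjacent to each of $x,y,z$ (realizable in $\Z^2$ with $c_1$), rooted at $c$, which has three children. The map $f(c)=x$, $f(x)=f(y)=f(z)=c$ is continuous---every edge is carried into the edge $\{c,x\}$---and has no fixed point. So a finite tree can perfectly well admit a fixed-point-free continuous self-map, your proposed ``direction/orientation'' argument must break down, and the lemma has to be read with the usual convention that the empty set is (vacuously) connected. The hypothesis on the root does no work in the connectivity argument; it is an artifact of how the lemma is stated and used (in Theorem~\ref{tree} the root is chosen to be a vertex of degree at least~2, and there $\Fix(f)$ contains all the leaves, so emptiness never arises). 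Your sentence ``the real content reduces to showing $\Fix(f)\neq\emptyset$'' is the one genuine misstep: that reduction is wrong, and the bulk of the effort you plan to spend on it would be spent proving a false statement.
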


\begin{thm}
\label{tree}
Let $(X,\kappa)$ be a digital image such that
the graph $G=(X,\kappa)$ is a finite tree with $\#X > 1$.
Let $E$ be the set of vertices of $G$ that have degree 1. Then
$E$ is a minimal freezing set for $G$.
\end{thm}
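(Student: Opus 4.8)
The plan is to show that $E$ (the set of degree-$1$ vertices, i.e.\ the leaves) is a freezing set and that it is minimal. For the freezing property, let $f \in C(X,\kappa)$ with $E \subset \Fix(f)$; I want to conclude $f = \id_X$. The natural approach is to use uniqueness of paths in a tree together with Proposition~\ref{uniqueShortest}. In a tree, between any two vertices there is a \emph{unique} path, hence a unique shortest $\kappa$-path. Every vertex $x \in X$ lies on the unique path between some pair of leaves: indeed, extend any path through $x$ in both directions until it terminates, which must happen at degree-$1$ vertices since $X$ is finite and acyclic (a maximal path in a finite tree has leaves as endpoints). So pick leaves $e, e'$ with $x$ on the unique $e$--$e'$ path $P$; since $e, e' \in E \subset \Fix(f)$ and $P$ is the unique shortest path between them, Proposition~\ref{uniqueShortest} gives $P \subset \Fix(f)$, whence $x \in \Fix(f)$. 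As $x$ was arbitrary, $\Fix(f) = X$, i.e.\ $f = \id_X$. Applying this to all $f \in C(X,\kappa)$ with $E \subset \Fix(f)$ shows $E$ is a freezing set by Definition~\ref{freezeDef}. (One should double-check the degenerate case where $X$ has exactly two vertices: then both are leaves, $E = X$, and the claim is trivial; if $X$ has a vertex of degree $1$ whose unique neighbor has degree $1$ as well, that is the two-point case again, otherwise every vertex genuinely sits between two distinct leaves.)

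For minimality, I need to show that no proper subset $A \subsetneq E$ is a freezing set. Fix a leaf $e \in E \setminus A$, and let $e'$ be its unique $\kappa$-neighbor. Define $g : X \to X$ by $g(e) = e'$ and $g(x) = x$ for $x \neq e$. This is $\kappa$-continuous: the only adjacency involving $e$ is $e \adj_{\kappa} e'$, and $g(e) = e' \adjeq_{\kappa} e' = g(e')$; all other adjacencies are preserved since $g$ is the identity off $e$. Moreover $g|_A = \id_A$ because $e \notin A$, and $g \neq \id_X$ since $\#X > 1$ forces $e' \neq e$. Hence $A$ is not a freezing set, and $E$ is minimal.

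The main point requiring care is the claim that \emph{every} vertex of a finite tree lies on a path joining two leaves, together with the fact that in a tree the unique path between two vertices is automatically the unique shortest $\kappa$-path (so that Proposition~\ref{uniqueShortest} applies). The first is a standard graph-theoretic observation: starting from any vertex $x$, greedily extend a simple path in either direction; acyclicity prevents revisiting a vertex, finiteness forces termination, and a path can only terminate at a vertex with no unused neighbor, i.e.\ at a vertex all of whose neighbors are already on the path — in a tree this means the endpoint has degree $1$. The second follows because any $\kappa$-path between two vertices contains, as a subpath, the unique simple path between them, so the latter is the unique shortest one. Lemma~\ref{tree-lem} is not strictly needed for this argument but is consistent with it (it guarantees $\Fix(f)$ is connected, which our conclusion strengthens to $\Fix(f)=X$). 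I expect no serious obstacle beyond stating the leaf-to-leaf path fact cleanly and handling the two-vertex edge case.
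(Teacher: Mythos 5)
Your proof is correct, and the freezing-set half takes a genuinely different route from the paper's. The paper argues via Lemma~\ref{tree-lem}: rooting the tree at a vertex of degree at least $2$, it invokes the fact that $\Fix(f)$ is $\kappa$-connected for any $f\in C(X,\kappa)$ on a tree, and combines this with the observation that deleting any vertex of $X\setminus E$ disconnects $X$ (each resulting component containing a leaf), so a connected fixed point set containing all of $E$ must be all of $X$. You instead observe that every vertex lies on the unique simple path between two leaves, that in a tree this is automatically the unique shortest $\kappa$-path, and then apply Proposition~\ref{uniqueShortest} to conclude that path lies in $\Fix(f)$. Your route has the advantage of being uniform with the technique the paper uses for cycles, cubes and wedges, of not requiring the rooting device needed to meet the hypothesis of Lemma~\ref{tree-lem}, and of making explicit the leaf-in-every-component step that the paper's terser argument leaves implicit; the paper's route buys the stronger intermediate structural fact that $\Fix(f)$ is connected for \emph{any} continuous self-map of a tree, not just those fixing $E$. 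The minimality halves are essentially identical (a map sending one omitted leaf to its unique neighbor and fixing everything else), except that the paper separates out the two-point case so that the neighbor $y'$ can be taken in $X\setminus E$, whereas your construction handles both cases at once since continuity does not require $e'\notin E$. Your handling of the degenerate cases and of the claim that maximal simple paths in a finite tree terminate at degree-$1$ vertices is sound.
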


\begin{proof}
First consider the case that each vertex has degree~1. Since $X$ is a tree, it
follows that $X = \{x_0,x_1\} = E$, and $E$ is a freezing set. $E$ must be
minimal, since $X$ admits constant functions that are identities on their
restrictions to proper subsets of $E$.

Otherwise, there exists $x_0 \in X$ such that $x_0$ has degree of at least~2
in $G$. This implies $\#X > 2$, and
since $G$ is finite and acyclic, $\#E > 0$. Since $G$ is acyclic, removal of any member
of $X \setminus E$ would disconnect $X$. If we take $x_0$ to be the root vertex,
it follows from Lemma~\ref{tree-lem} that $E$ is a freezing set.

Since $\#E > 0$, for any $y \in E$ there exists $y' \in X \setminus E$
such that $y' \adj y$. Then the function
$f: X \to X$ defined by
\[ f(x) = \left \{ \begin{array}{ll}
   y' & \mbox{if } x = y; \\
   x & \mbox{if } x \neq y,
\end{array} \right .
\]
satisfies $f \in C(X,\kappa)$, 
$f|_{E \setminus \{y\}} = \id_{E \setminus \{y\}}$, and
$f \neq \id_X$. Thus $E \setminus \{y\}$ is not a freezing set.
Since $y$ was arbitrarily chosen, $E$ is minimal.
\end{proof}

\section{$s$-Cold sets}
In this section, we generalize our focus from
fixed points to approximate fixed points and, more
generally, to points constrained in the amount they
can be moved by continuous self-maps 
in the presence of fixed point sets. 
We obtain some analogues of our previous
results for freezing sets.

\subsection{Definition and basic properties}
In the following, we use the path-length metric $d$ for
connected digital images $(X,\kappa)$,
defined~\cite{Han05} as
\[ d(x,y)= \min\{\ell \, | \, \ell
      \mbox{ is the length of a $\kappa$-path in $X$ from $x$ to $y$} \}.
\]
If $X$ is finite and $\kappa$-connected, the {\em diameter} of $(X,\kappa)$ is
\[ diam(X,\kappa) = \max\{d(x,y) \, | \, x,y \in X\}.
\]
We introduce the following generalization of a freezing set.

\begin{definition}
\label{s-cold-def}
Given $s \in \N^*$, we say $A \subset X$ is an
{\em $s$-cold set} for the connected digital image $(X,\kappa)$
if given $g \in C(X,\kappa)$ such that
$g|_A = \id_A$, then for all $x \in X$, $d(x,g(x)) \le s$.
A {\em cold set} is a 1-cold set.
\end{definition}

Note a 0-cold set is a freezing set.

\begin{thm}
\label{coldSetEquiv}
Let $(X, \kappa)$ be a connected digital image.
Let $A \subset X$. 
Then $A \subset Fix(g)$ is an $s$-cold set for 
$(X,\kappa)$ if and only 
if for every isomorphism $F: (X,\kappa) \to (Y,\lambda)$,
if $g: X \to Y$ is $(\kappa,\lambda)$-continuous and
$F|_A = g|_A$, then for all $x \in X$, 
$d(F(x),g(x)) \le s$.
\end{thm}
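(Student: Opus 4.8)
The plan is to mirror the proof of the equivalence $(1)\Leftrightarrow(3)$ in Theorem~\ref{freezeAndFix}, the one extra ingredient being that a digital isomorphism preserves the path-length metric $d$. First I would record that fact: if $F:(X,\kappa)\to(Y,\lambda)$ is an isomorphism, then $d(F(x),F(x'))=d(x,x')$ for all $x,x'\in X$. Indeed, a $(\kappa,\lambda)$-continuous map sends a $\kappa$-path of length $\ell$ to a $\lambda$-path of length at most $\ell$, so $d(F(x),F(x'))\le d(x,x')$; applying the same observation to the continuous map $F^{-1}$ gives the reverse inequality, hence equality. (Note also that $g|_A=\id_A$ is the same condition as $A\subset\Fix(g)$ when $g$ is a self-map, which is why the statement can be phrased either way.)

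For the forward implication, assume $A$ is an $s$-cold set for $(X,\kappa)$ and let $F:(X,\kappa)\to(Y,\lambda)$ be an isomorphism, with $g:X\to Y$ a $(\kappa,\lambda)$-continuous map such that $F|_A=g|_A$. Then $F^{-1}\circ g\in C(X,\kappa)$ by Theorem~\ref{composition}, and $(F^{-1}\circ g)|_A=F^{-1}\circ F|_A=\id_A$. The $s$-cold hypothesis therefore gives $d(x,F^{-1}(g(x)))\le s$ for every $x\in X$. Applying the metric-preservation property of $F$ to the pair $x$ and $F^{-1}(g(x))$ yields $d(F(x),g(x))\le s$, which is the asserted conclusion.

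For the converse, suppose the displayed condition holds for every isomorphism out of $(X,\kappa)$; in particular it holds for the isomorphism $F=\id_X:(X,\kappa)\to(X,\kappa)$. Then for any $g\in C(X,\kappa)$ with $g|_A=\id_A$ we obtain $d(x,g(x))=d(\id_X(x),g(x))\le s$ for all $x\in X$, so $A$ is an $s$-cold set for $(X,\kappa)$ by Definition~\ref{s-cold-def}.

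The argument is essentially bookkeeping, and the only step that needs genuine care — the one I expect to be the main (mild) obstacle — is the metric-preservation lemma for isomorphisms, since it is precisely this fact that transports the bound $d(x,F^{-1}(g(x)))\le s$ into $d(F(x),g(x))\le s$. Once that is established, the equivalence follows exactly as in the proof of Theorem~\ref{freezeAndFix}.
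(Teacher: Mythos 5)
Your proof is correct and follows essentially the same route as the paper's: reduce the forward direction to the $s$-cold hypothesis via $F^{-1}\circ g\in C(X,\kappa)$ and then transport the bound through $F$, and obtain the converse by specializing to $F=\id_X$. The only cosmetic difference is that you prove full metric preservation $d(F(x),F(x'))=d(x,x')$, whereas the paper only needs (and only uses) the one-sided fact that $F$ carries a $\kappa$-path of length at most $s$ to a $\lambda$-path of length at most $s$.
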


\begin{proof}
Suppose $A$ is an $s$-cold set for $(X,\kappa)$. Then
for all $f \in C(X,\kappa)$ such that $f|_A = \id_A$ and all $x \in X$,
we have $d(x,f(x)) \le s$. Let
$F: (X,\kappa) \to (Y,\lambda)$ be an isomorphism. Let
$g: X \to Y$ be $(\kappa,\lambda)$-continuous with
$F|_A = g|_A$. Then 
\[ \id_A = F^{-1} \circ F|_A = F^{-1} \circ g|_A.
\]
Let $x \in X$. Then $d(x,F^{-1} \circ g(x)) \le s$, i.e.,
there is a $\kappa$-path $P$ in $X$ of length at most $s$
from $x$ to $F^{-1} \circ g(x)$. Therefore, $F(P)$ is
a $\lambda$-path in $Y$ of length at most $s$
from $F(x)$ to $F \circ F^{-1} \circ g(x) = g(x)$, i.e.,
$d(F(x),g(x)) \le s$.

Suppose $A \subset X$ and for every isomorphism $F: (X,\kappa) \to (Y,\lambda)$,
if $g: X \to Y$ is $(\kappa,\lambda)$-continuous and
$F|_A = g|_A$, then for all $x \in X$, $d(F(x),g(x)) \le s$.
Let $f \in C(X,\kappa)$ with $f|_A = \id_A$. Since $\id_X$ is an isomorphism, 
for all $x \in X$, $d(x,f(x)) \le s$. Thus, $A$ is an $s$-cold set for $(X,\kappa)$.
\end{proof}

Given a digital image $(X,\kappa)$ and
$f \in C(X,\kappa)$, a point 
$x \in X$ is an
{\em almost fixed point of}~$f$~\cite{Rosenfeld} or
an {\em approximate fixed point of}~$f$~\cite{BEKLL}
if $f(x) \adjeq_{\kappa} x$.

\begin{remark}
The following are easily observed.
\begin{itemize}
    \item If $A \subset A' \subset X$ and
          $A$ is an $s$-cold set for
          $(X,\kappa)$, then $A'$ is an
          $s$-cold set for $(X,\kappa)$.
    \item $A$ is a cold set (i.e., a 1-cold set)
          for $(X,\kappa)$ if and only if given $f \in C(X,\kappa)$ 
          such that $f|_A = \id_A$, every $x \in X$ is an approximate fixed point of $f$.
    \item In a finite connected digital image $(X,\kappa)$, every nonempty
          subset of $X$ is a $diam(X)$-cold set.
    \item If $s_0 < s_1$ and $A$ is an $s_0$-cold set
          for $(X,\kappa)$, then $A$ is an $s_1$-cold set for $(X,\kappa)$.
\end{itemize}
\end{remark}

Note a freezing set is a cold set, but the
converse is not generally true, as shown in the following.

\begin{exl}
It follows from Definition~\ref{s-cold-def} that
$\{0\}$ is a cold set, but not a freezing set, for
$X=[0,1]_{\Z}$, since the constant function $g$ with
value 0 satisfies $g|_{\{0\}}=\id_{\{0\}}$,
and $g(1)=0 \adj_{c_1} 1$.
\end{exl}

$s$-cold sets are invariant in the sense of the following.

\begin{thm}
\label{s-cold-invariant}
Let $(X,\kappa)$ be a connected digital image, 
let $A$ be an $s$-cold set for $(X,\kappa)$,
and let $F: (X,\kappa) \to (Y,\lambda)$ be an
isomorphism. Then $F(A)$ is
an $s$-cold set for $(Y,\lambda)$.
\end{thm}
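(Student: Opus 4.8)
The plan is to mimic the proof of Theorem~\ref{freezeInvariant}, adapting it to the metric condition defining $s$-cold sets, and to rely on Theorem~\ref{coldSetEquiv} to translate between the two formulations. First I would take an arbitrary $g \in C(Y,\lambda)$ with $g|_{F(A)} = \id_Y|_{F(A)}$, and aim to show $d(y, g(y)) \le s$ for all $y \in Y$. Since $F$ is an isomorphism, $F^{-1} \colon (Y,\lambda) \to (X,\kappa)$ is continuous, so $h := F^{-1} \circ g \circ F$ is a continuous self-map of $(X,\kappa)$ by Theorem~\ref{composition}.

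Next I would check that $h|_A = \id_A$: for $a \in A$ we have $F(a) \in F(A)$, so $g(F(a)) = F(a)$, hence $h(a) = F^{-1}(F(a)) = a$. Because $A$ is an $s$-cold set for $(X,\kappa)$, it follows that $d_X(x, h(x)) \le s$ for every $x \in X$. Now fix $y \in Y$ and set $x = F^{-1}(y)$; then $d_X\bigl(F^{-1}(y), F^{-1}(g(y))\bigr) = d_X(x, h(x)) \le s$, so there is a $\kappa$-path in $X$ of length at most $s$ from $F^{-1}(y)$ to $F^{-1}(g(y))$. Applying $F$ to this path yields a $\lambda$-path in $Y$ of the same length from $y$ to $g(y)$ (here I use that $F$, being an isomorphism, sends paths to paths of equal length), whence $d_Y(y, g(y)) \le s$. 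Since $y$ was arbitrary, $F(A)$ is an $s$-cold set for $(Y,\lambda)$.

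Alternatively, and perhaps more cleanly, I would just invoke Theorem~\ref{coldSetEquiv} directly: given any isomorphism $G \colon (Y,\lambda) \to (Z,\mu)$ and continuous $g \colon Y \to Z$ with $G|_{F(A)} = g|_{F(A)}$, the composite $G \circ F \colon (X,\kappa) \to (Z,\mu)$ is an isomorphism and $g \circ F \colon X \to Z$ is continuous with $(G \circ F)|_A = (g \circ F)|_A$; applying the characterization in Theorem~\ref{coldSetEquiv} to $A$ gives $d_Z\bigl((G\circ F)(x), (g \circ F)(x)\bigr) \le s$ for all $x \in X$, which (substituting $x = F^{-1}(y)$) is exactly $d_Z(G(y), g(y)) \le s$ for all $y \in Y$. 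Then the converse direction of Theorem~\ref{coldSetEquiv}, applied to $(Y,\lambda)$, shows $F(A)$ is $s$-cold.

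I do not anticipate a genuine obstacle here; the only point requiring a little care is the observation that an isomorphism carries a shortest (or any) path to a path of the same length, so that the path-length metric is preserved under $F$ and $F^{-1}$ — this is the same metric-transport step already used inside the proof of Theorem~\ref{coldSetEquiv}, so it can be cited or reproduced in one line. The argument is otherwise a routine diagram chase, structurally identical to Theorem~\ref{freezeInvariant} with ``$=$'' replaced by ``$d(\cdot,\cdot) \le s$''.
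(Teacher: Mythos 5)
Your proposal is correct, and your second (``cleaner'') alternative is essentially the paper's own proof: the paper takes $f \in C(Y,\lambda)$ fixing $F(A)$, notes $f\circ F|_A = F|_A$, applies Theorem~\ref{coldSetEquiv} with the isomorphism $F$ and the continuous map $f\circ F$ to get $d(f\circ F(x),F(x))\le s$, and substitutes $y=F(x)$. Your first variant, which conjugates to $F^{-1}\circ g\circ F$ and transports the path by hand, is just the same argument with the metric-transport step of Theorem~\ref{coldSetEquiv} inlined rather than cited.
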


\begin{proof}
Let $f \in C(Y,\lambda)$ such that $f|_{F(A)} = \id_{F(A)}$. Then
\[ f \circ F|_A =   f|_{F(A)} \circ F|_A =
   \id_{F(A)} \circ F|_A   = F|_A.
\]
By Theorem~\ref{coldSetEquiv}, for all $x \in X$,
$d(f \circ F(x), F(x)) \le s$.
Substituting $y=F(x)$, we have that $y \in Y$ implies $d(f(y), y) \le s$.
By Definition~\ref{s-cold-def}, $F(A)$ is a cold set
for $(Y,\lambda)$.
\end{proof}

$A$ is a {\em $\kappa$-dominating set} 
(or a {\em dominating set} when
$\kappa$ is understood) for $(X,\kappa)$ if for
every $x \in X$ there exists $a \in A$ such that
$x \adjeq_{\kappa} a$~\cite{CandL}. This notion is somewhat
analogous to that of a dense set in a topological space, and the
following is somewhat analogous to the fact that in topological
spaces, a continuous function is uniquely determined by its
values on a dense subset of the domain.

\begin{thm}
Let $(X,\kappa)$ be a digital image and let
$A$ be $\kappa$-dominating in $X$. Then $A$
is $2$-cold in $(X,\kappa)$.
\end{thm}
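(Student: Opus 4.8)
The plan is to take an arbitrary $f \in C(X,\kappa)$ with $f|_A = \id_A$ and an arbitrary $x \in X$, and to bound $d(x,f(x))$ by $2$. First I would use the dominating property of $A$: there exists $a \in A$ with $x \adjeq_\kappa a$. Then, since $f$ is $\kappa$-continuous, $f(x) \adjeq_\kappa f(a)$; and since $a \in A$, $f(a) = a$. Chaining these, we get a $\kappa$-path from $x$ to $f(x)$ of the form $x,\ a,\ f(x)$ (with repetitions allowed if some of these coincide), so $d(x,f(x)) \le 2$.

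The one point requiring a little care is that the ``path'' $x, a, f(x)$ uses the relations $x \adjeq_\kappa a$ and $a = f(a) \adjeq_\kappa f(x)$, i.e., at each step the consecutive points are either adjacent or equal; this is exactly what is needed for a $\kappa$-path in the sense used for the path-length metric $d$, and collapsing equal consecutive points only shortens the path. So in every case the length is at most $2$, with equality at worst. This also implicitly uses that $X$ is connected so that $d$ is defined and finite; that is part of the ambient hypothesis for cold sets.

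The main (and only real) obstacle is essentially bookkeeping: ensuring that the degenerate cases — $x = a$, or $f(x) = a$, or $f(x) = x$ — are all subsumed, and that invoking continuity of $f$ is legitimate via Theorem~\ref{composition}-style reasoning (here just the basic characterization of continuity: adjacency maps to adjacency-or-equality). Once that is in place the conclusion follows from Definition~\ref{s-cold-def}. I would write it as: given $f \in C(X,\kappa)$ with $f|_A = \id_A$ and given $x \in X$, pick $a \in A$ with $x \adjeq_\kappa a$; then $f(x) \adjeq_\kappa f(a) = a \adjeq_\kappa x$ gives a $\kappa$-path of length at most $2$ from $x$ to $f(x)$, hence $d(x,f(x)) \le 2$, so $A$ is $2$-cold.
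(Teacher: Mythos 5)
Your proposal is correct and follows essentially the same argument as the paper: pick a dominating $a \in A$ with $x \adjeq_\kappa a$, use continuity to get $f(x) \adjeq_\kappa f(a) = a$, and conclude that $\{x, a, f(x)\}$ is a path of length at most $2$. The extra care you take over degenerate cases and the $\adjeq$ convention is fine but not needed beyond what the paper already assumes.
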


\begin{proof}
Let $f \in C(X,\kappa)$ such that $f|_A = \id_A$.
Since $A$ is $\kappa$-dominating, for every
$x \in X$ there is an $a \in A$ such that
$x \adjeq a$. Then $f(x) \adjeq f(a)=a$. Thus,
we have the path $\{x,a,f(x)\} \subset X$
from $x$ to $f(x)$ of length at most 2.
The assertion follows.
\end{proof}

\begin{thm}
Let $(X,\kappa)$ be rigid. If $A$ is a cold set
for $X$, then $A$ is a freezing set for $X$.
\end{thm}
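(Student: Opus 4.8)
The plan is to prove the contrapositive, or rather to argue directly: let $A$ be a cold set for the rigid image $(X,\kappa)$, and let $g \in C(X,\kappa)$ with $g|_A = \id_A$. I want to conclude $g = \id_X$. Since $A$ is cold (i.e. $1$-cold), the defining property gives that $d(x,g(x)) \le 1$ for every $x \in X$; that is, $g(x) \adjeq_\kappa x$ for all $x$, so $g$ is a ``pointwise near-identity'' map. The goal is then to upgrade this to $g = \id_X$ using rigidity.

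The key idea is that a self-map $g$ with $g(x) \adjeq_\kappa x$ for all $x$ is homotopic to $\id_X$ via a one-step homotopy. Concretely, define $h: X \times [0,1]_\Z \to X$ by $h(x,0) = x$ and $h(x,1) = g(x)$. First I would check this is a digital $\kappa$-homotopy: the stage maps $h_0 = \id_X$ and $h_1 = g$ are continuous by assumption, and for each fixed $x$ the induced map $h_x: [0,1]_\Z \to X$ sends $0 \mapsto x$ and $1 \mapsto g(x)$, which is $(2,\kappa)$-continuous precisely because $g(x) \adjeq_\kappa x$. Hence $g \sim_\kappa \id_X$. Since $X$ is rigid, the identity map $\id_X$ is rigid, meaning no continuous map is homotopic to $\id_X$ except $\id_X$ itself; therefore $g = \id_X$. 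This shows $A$ is a freezing set for $X$ by Definition~\ref{freezeDef}.

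The main obstacle — really the only subtle point — is verifying that $h$ as defined genuinely satisfies all three conditions in Definition~\ref{htpy-2nd-def} of a digital homotopy. The endpoint and stage-continuity conditions are immediate, and the path-continuity condition for each $h_x$ reduces exactly to the inequality $d(x,g(x)) \le 1$ supplied by coldness, so there is no gap here; one just has to state it cleanly. (Note that a $2$-cold set would not suffice for this argument, since $d(x,g(x)) \le 2$ does not give a one-step homotopy directly — though one could try a two-step homotopy through an intermediate point, continuity of the intermediate stage is not automatic, which is presumably why the theorem is stated for cold sets.) I would write the proof in three short moves: invoke coldness to get $g(x) \adjeq_\kappa x$ everywhere; build the one-step homotopy $h$ and verify it; then apply rigidity of $\id_X$ to conclude $g = \id_X$, hence $A$ is a freezing set.
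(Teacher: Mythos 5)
Your proof is correct and follows essentially the same route as the paper's: use coldness to get $g(x) \adjeq_\kappa x$ for all $x$, build the one-step homotopy $H(x,0)=x$, $H(x,1)=g(x)$, and invoke rigidity to conclude $g=\id_X$. Your verification of the homotopy conditions is slightly more explicit than the paper's, but the argument is identical.
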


\begin{proof}
Let $f \in C(X,\kappa)$ be such that
$f|_A = \id_A$. Since $A$ is cold,
$f(x) \adjeq x$ for all $x \in X$. Therefore,
the map $H: X \times [0,1]_{\Z} \to X$
defined by $H(x,0)=x$, $H(x,1)=f(x)$, is a
homotopy. Since $X$ is rigid, $f = \id_X$. The
assertion follows.
\end{proof}

\subsection{Cold sets for cubes}
In this section, we consider cold sets for digital
cubes in $\Z^n$. Note the hypotheses
of Proposition~\ref{c2-coldSet} imply $A$ is
$c_1$- and $c_2$-dominating in $Bd(X)$.

\begin{prop}
\label{c2-coldSet}
Let $m,n \in \N$. Let
$X = [0,m]_{\Z} \times [0,n]_{\Z}$.
Let $A \subset Bd(X)$ be such that
no pair of $c_1$-adjacent members of $Bd(X)$ 
belong to $Bd(X) \setminus A$.
Then $A$ is a cold set for $(X,c_2)$. Further,
for all $f \in C(X,c_2)$, if $f|_A = \id_A$ then
$f|_{Int(X)} = \id|_{Int(X)}$.
\end{prop}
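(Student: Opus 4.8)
I will show that for any $f \in C(X,c_2)$ with $f|_A = \id_A$, every point of $X$ is moved at most distance $1$ in the path-length metric, and that interior points are not moved at all. The key point is that $A$ being $c_1$-dominating in $Bd(X)$ gives control on $Bd(X)$, and then the Lemma~\ref{c1pulling} ``pulling'' mechanism propagates that control to $Int(X)$.

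First I would handle $Bd(X)$. Let $x \in Bd(X)$. If $x \in A$ then $f(x) = x$. Otherwise, by the hypothesis on $A$, there is no $c_1$-adjacent pair of $Bd(X)$-points both outside $A$; since $Bd(X)$ (for a rectangle, $m,n \ge 1$) is a cycle in the $c_1$-adjacency, every vertex of $Bd(X) \setminus A$ has both of its $c_1$-neighbors in $Bd(X)$ lying in $A$. Pick such a neighbor $a \in A$; then $f(a) = a$ and $a \adj_{c_2} x$, so by continuity $f(x) \adjeq_{c_2} f(a) = a$, giving $d(x,f(x)) \le d(x,a) + d(a,f(x)) \le 1 + 1 = 2$. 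To sharpen this to $d(x,f(x)) \le 1$, I would use that $x$ has \emph{two} such $A$-neighbors $a, a'$ on the boundary cycle, with $a \adj_{c_2} x \adj_{c_2} a'$; continuity forces $f(x)$ to be $c_2$-adjacent-or-equal to both $a$ and $a'$, and one checks (a short case analysis on the few points $c_2$-near both $a$ and $a'$ in a rectangle) that every such point is within path-distance $1$ of $x$. This establishes that $A$ is a cold set as far as boundary points are concerned.

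Next I would handle $Int(X)$, aiming for the stronger conclusion $f|_{Int(X)} = \id_{Int(X)}$. Here I cannot invoke Proposition~\ref{bdAndInt} directly, because I do not yet know $Bd(X) \subset \Fix(f)$ — only that boundary points move a little. Instead I argue by contradiction as in the proof of Proposition~\ref{bdAndInt}: suppose $x \in Int(X)$ with $p_j(f(x)) \ne x_j$ for some coordinate $j$, say $p_j(f(x)) > x_j$. Run the axis-parallel path through $x$ in direction $j$ up to the boundary point $y_1$ with $p_j(y_1) = 0$; by repeated application of Lemma~\ref{c1pulling}(1) (which applies to $c_2 = c_n$ in $\Z^2$), $p_j(f(y_1)) > 0 = p_j(y_1)$. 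But then $f(y_1) \ne y_1$ \emph{and} $f(y_1)$ differs from $y_1$ in the $j$-coordinate in the direction pointing away from the cube, i.e. $p_j(f(y_1)) \ge 1 > 0$ is fine — so I need a contradiction with the boundary analysis. The contradiction is that $y_1 \in Bd(X) \setminus \Fix(f)$ would have to have $f(y_1)$ with $j$-th coordinate $\ge 1$, yet from the first part $f(y_1)$ is $c_2$-adjacent-or-equal to both its $A$-neighbors on the boundary cycle, which pins $f(y_1)$ to within distance $1$ of $y_1$ along the boundary; combining ``moved in the $+j$ direction into the interior'' with ``within $c_2$-distance $1$ of two boundary neighbors of $y_1$'' is impossible. (If $y_1 \in A$ the contradiction is immediate, $f(y_1) = y_1$.) The symmetric argument with $y_m$ and Lemma~\ref{c1pulling}(2) handles $p_j(f(x)) < x_j$.

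\textbf{The main obstacle.} The delicate part is the bookkeeping in the boundary case analysis: verifying that a point $c_2$-adjacent-or-equal to two specified boundary-cycle neighbors of $x$ must lie within path-distance $1$ of $x$, and ruling out the ``escape into the interior'' for a boundary point forced to move by the pulling lemma. This requires separating the cases where $x$ is on an edge of the rectangle versus at a corner (corners have only $c_1$-degree $2$ in $Bd(X)$ but their $A$-neighbors constrain $f(x)$ strongly), and checking each of the boundary configurations; none of it is deep, but it is where the hypothesis ``no two $c_1$-adjacent boundary points both outside $A$'' is actually used, and it must be done carefully. I expect the write-up to lean on the observation that in a rectangle the only points $c_2$-close to a given $a \in Bd(X)$ are $a$ itself, its (at most) two boundary-cycle neighbors, and one or two interior points, so the constraints from two neighbors $a, a'$ leave only $x$ and possibly one neighbor of $x$ as candidates for $f(x)$.
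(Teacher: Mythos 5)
Your treatment of the boundary points is essentially the paper's: each $x \in Bd(X)\setminus A$ has both of its $c_1$-neighbors on the boundary cycle in $A$, and intersecting the two neighborhoods pins $f(x)$ into $N^*_{c_2}(x)$ (indeed, for an edge point $(t,0)$, into $\{(t,0),(t,1)\}$). The gap is in your interior argument: the contradiction you claim does not exist. If $x=(x_0,y_0)\in Int(X)$ and $p_2(f(x))>y_0$, repeated use of Lemma~\ref{c1pulling} gives only $p_2(f(x_0,0))>0$, i.e.\ the bottom-edge point below $x$ is displaced \emph{into} the rectangle; but your own boundary analysis explicitly allows $f(x_0,0)=(x_0,1)$, which is $c_2$-adjacent to both $A$-neighbors $(x_0-1,0)$ and $(x_0+1,0)$ (it differs from each by $1$ in each coordinate). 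So ``moved one step in the $+j$ direction into the interior'' is perfectly compatible with ``$c_2$-adjacent-or-equal to both boundary $A$-neighbors of $y_1$,'' and no contradiction results. (If such a contradiction were available, the boundary case would already have forced $Bd(X)\subset\Fix(f)$ and you could simply have quoted Proposition~\ref{bdAndInt}, which you correctly observe you cannot.)

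The paper closes the interior case by a different, metric (Lipschitz) argument rather than by pulling: using the hypothesis on $A$, choose $q_L\in A$ on the bottom edge and $q_U\in A$ on the top edge with $|p_1(q_L)-x_0|\le 1$ and $|p_1(q_U)-x_0|\le 1$, and build an injective $c_2$-path $P$ of length exactly $n$ from $q_L$ through $x$ to $q_U$, reaching $x$ after exactly $y_0$ steps (diagonal steps absorb the horizontal offset). Since $f$ fixes $q_L$ and $q_U$ and each $c_2$-step changes $p_2$ by at most $1$, the image path forces $p_2(f(x))\le p_2(q_L)+y_0=y_0$ and $p_2(f(x))\ge p_2(q_U)-(n-y_0)=y_0$, hence $p_2(f(x))=y_0$; the symmetric argument with the left and right edges gives $p_1(f(x))=x_0$, so $f(x)=x$. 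You would need to replace your pulling-lemma step with a sandwiching argument of this kind for the interior claim, and hence for both assertions of the proposition, to go through.
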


\begin{proof}
Let $x=(x_0,y_0) \in X$. Let $f \in C(X,c_2)$
such that $f|_A = \id_A$.
Consider the following.
\begin{itemize}
    \item If $x \in A$ then $f(x)=x$.
    \item If $x \in Bd(X) \setminus A$ then both of
          the $c_1$-neighbors of $x$ in $Bd(X)$
          belong to $A$. We will show 
          $f(x) \adjeq_{c_2} x$.
          
          Let $K = \{(0,0), (0,n), (m,0), (m,n) \} \subset Bd(X)$.
          \begin{itemize}
              \item For $x \in K$, consider the case $x=(0,0)$. Then 
          $\{(0,1), (1,0)\} \subset A$, so we must have 
          \[ f(x) \in N_{c_2}^*((0,1)) \cap N_{c_2}^*((1,0)) \subset N_{c_2}^*(x).
          \]
          For other $x \in K$, we similarly find
          $f(x) \adjeq_{c_2} x$.
              \item For $x \in Bd(X) \setminus K$, 
                   consider the case $x=(t,0)$.
                   For this case,
                   $\{(t-1,0), (t+1,0)\} \subset A$,
                   so 
                   \[(t-1,0) = f(t-1,0)
                   \adjeq_{c_2} f(x) \adjeq_{c_2}
                   f(t+1,0) = (t+1,0).
                   \]
                   Therefore, $f(x) \in \{x,(t,1)\}$, so $f(x) \adjeq_{c_2} x$.
                   
                   For other $x \in Bd(X) \setminus K$, we similarly find
          $f(x) \adjeq_{c_2} x$.
          \end{itemize}
    \item If $x \in Int(X)$, let
$L=\{(z,0)\}_{z=x_0-1}^{x_0+1}$ and
$U=\{(z,n)\}_{z=x_0-1}^{x_0+1}$.
We have
\[ L \cap A \neq \emptyset \neq U \cap A.
\]

\begin{figure}
\includegraphics[height=2.5in]{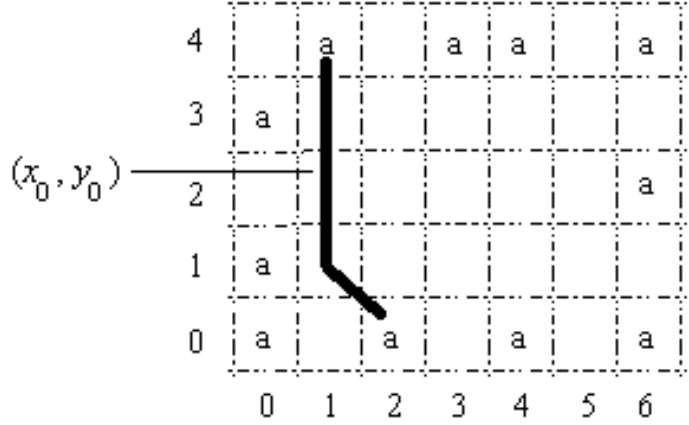}
\label{thePath}
\caption{Illustration of the proof of
Proposition~\ref{c2-coldSet} for the case
$(x_0,y_0) \in Int(X)$. 
$X = [0,6]_{\Z} \times [0,4]_{\Z}$. 
Members of the set $A \subset Bd(X)$ 
are marked ``a". Corner points 
such as $(0,4)$ need not belong to $A$; also, although we cannot have
$c_1$-adjacent members of $Bd(X)$
in $Bd(X) \setminus A$, we can have
$c_2$-adjacent members of $Bd(X)$
in $Bd(X) \setminus A$, e.g.,
$(5,4)$ and $(6,3)$. The heavy polygonal line illustrates a $c_2$-path~$P$ of length $n=4$:
$P(0)=q_L=(2,0)$, $P(1)=(1,1)$,
$P(2)=(x_0,y_0)=(1,2)$, $P(3)=(1,3)$,
$P(4)=q_U=(1,4)$.
}
\end{figure}
Since no pair of $c_1$-adjacent members of $Bd(X)$
belong to $Bd(X) \setminus A$, there exist
$q_L \in L \cap A$, $q_U \in U \cap A$ such that
\[ |p_1(q_L)-x_0| \le 1 \mbox{ and }
|p_1(q_U)-x_0| \le 1.
\]
Thus, there is an injective $c_2$-path 
$P: [0,n]_{\Z} \to X$ such that
$P([0,y_0)]_{\Z})$ runs from $q_L$ to $x$ and
$P([y_0,n]_{\Z})$ runs from~$x$
to $q_U$ (note since we use $c_2$-adjacency, there can be steps
of the path that change both 
coordinates - see 
Figure~4).
Therefore, $f \circ P$
is a path from $f(q_L)=q_L$ to $f(x)$
to $f(q_U)=q_U$, and
$p_2 \circ f \circ P$ is a path 
from $p_2(q_L)=0$
to $p_2(f(x))$ to $p_2(q_U)=n$.

If $y' = p_2(f(x)) > y_0$, then
$p_2 \circ f \circ P|_{[0,y_0]_{\Z}}$
is a $c_2$-path of length $y_0$ 
from 0 to $y'$, which is impossible.
Similarly, if $y' < y_0$,
then $p_2 \circ f \circ P|_{[y_0,n]_{\Z}}$
is a $c_2$-path of length $n-y_0$ 
from $y'$ to $n$, which is impossible.
Therefore, we must have 
\begin{equation}
\label{yBounds}
p_2 \circ f(x) = y_0.
\end{equation}

Similarly, by replacing the neighborhoods of the projections of $x$ on the
lower and upper edges of the cube, $L$ and $U$, by the neighborhoods of the projections of $x$ on the
the left and right edges of the cube, $L'=\{(0,z)\}_{z=y_0-1}^{y_0+1}$ and
$R=\{(m,z)\}_{z=y_0-1}^{y_0+1}$, and using an argument similar to that used to 
obtain~(\ref{yBounds}), we conclude that
\begin{equation}
    \label{xBounds}
    p_1 \circ f(x) = x_0.
\end{equation}
It follows from~(\ref{xBounds})
and~(\ref{yBounds}) that $f(x) = x$.
\end{itemize}
Thus, in all cases, $f(x) \adjeq_{c_2} x$, and
$f|_{Int(X)} = \id_{Int(X)}$.
\end{proof}

\begin{prop}
\label{c2-2coldSet}
Let $m,n \in \N$. Let
$X = [0,m]_{\Z} \times [0,n]_{\Z}$.
Let $A \subset Bd(X)$ be $c_1$-dominating
in $Bd(X)$.
Then $A$ is a 2-cold set for $(X,c_2)$. Further,
for all $f \in C(X,c_2)$, if $f|_A = \id_A$ then
$f|_{Int(X)} = \id|_{Int(X)}$.
\end{prop}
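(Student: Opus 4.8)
\textbf{Proof plan for Proposition~\ref{c2-2coldSet}.}
The plan is to follow the structure of the proof of Proposition~\ref{c2-coldSet}, weakening the conclusion on $Bd(X)$ from ``$f$ moves boundary points by at most one step'' to ``$f$ moves \emph{every} point by at most two steps,'' while keeping the strong conclusion $f|_{Int(X)} = \id_{Int(X)}$. Fix $f \in C(X,c_2)$ with $f|_A = \id_A$. The key observation is that $c_1$-domination of $Bd(X)$ by $A$ is weaker than the hypothesis of Proposition~\ref{c2-coldSet}: we can now have two consecutive edge points, and even a corner together with one of its edge-neighbors, outside $A$. So the argument splits into the interior case and the boundary case, and only the boundary case needs real modification.

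First, the interior case. I would argue that the proof of~(\ref{yBounds}) and~(\ref{xBounds}) in Proposition~\ref{c2-coldSet} goes through essentially verbatim. The point is that $c_1$-domination of $Bd(X)$ by $A$ still guarantees, for any $x = (x_0,y_0) \in Int(X)$, points $q_L \in L \cap A$ and $q_U \in U \cap A$ with $|p_1(q_L) - x_0| \le 1$ and $|p_1(q_U) - x_0| \le 1$: indeed $L$ consists of three consecutive points of the bottom edge, and a $c_1$-dominating set of $Bd(X)$ cannot miss all three (missing $(x_0-1,0),(x_0,0),(x_0+1,0)$ would leave $(x_0,0)$ with no $c_1$-neighbor in $A$ inside $Bd(X)$, and its only $c_1$-neighbor outside $Bd(X)$ is in $Int(X)$). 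The same length-of-path obstruction then forces $p_2(f(x)) = y_0$ and, via the left/right edges, $p_1(f(x)) = x_0$, so $f(x) = x$. Hence $f|_{Int(X)} = \id_{Int(X)}$, which is both the ``Further'' clause and handles all interior points for the 2-cold estimate (they do not move at all).

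Second, the boundary case, which is where the weaker hypothesis bites and where I expect the main obstacle. Let $x \in Bd(X) \setminus A$. Under $c_1$-domination, $x$ has a $c_1$-neighbor $a \in Bd(X) \cap A$ (note $x$'s $c_1$-neighbors in $X$ are one point of $Int(X)$ — when $x$ is not a corner — and the two edge-neighbors, at most; and $x$ cannot be dominated by the interior neighbor since $A \subset Bd(X)$, so $a$ is an edge-neighbor). Then $f(a) = a$ and $x \adjeq_{c_1} a$, so $d(x, f(x)) \le d(x, a) + d(a, f(x)) = 1 + d(f(a), f(x))$, and by $c_2$-continuity $f(a) \adjeq_{c_2} f(x)$, i.e. $d(f(a),f(x)) \le 1$; thus $d(x,f(x)) \le 2$. (I would present this as the short path $\{x, a, f(x)\} \subset X$ from $x$ to $f(x)$ of length at most $2$, mirroring the dominating-set theorem earlier in the section.) Corners need no separate treatment: a corner has no interior $c_1$-neighbor, but it still has two edge-neighbors, and $c_1$-domination supplies one of them in $A$.

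Finally I would assemble the cases: every $x \in Int(X)$ is fixed, hence $d(x,f(x)) = 0 \le 2$; every $x \in A$ is fixed; and every $x \in Bd(X) \setminus A$ satisfies $d(x,f(x)) \le 2$ by the displayed length-$2$ path. Therefore $d(x,f(x)) \le 2$ for all $x \in X$, so by Definition~\ref{s-cold-def} $A$ is a $2$-cold set for $(X,c_2)$, and the ``Further'' assertion has already been shown. The only genuinely delicate point is making sure the interior argument survives the weakened hypothesis — i.e. that $c_1$-domination of $Bd(X)$, not the stronger ``no two consecutive edge points missing,'' still produces the needed near-projections $q_L, q_U$ — and the three-consecutive-points remark above is what secures it.
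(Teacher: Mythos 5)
Your proposal is correct and follows essentially the same route as the paper's proof: the same three-case split ($x \in A$, $x \in Bd(X)\setminus A$ with the length-$2$ path $\{x,a,f(x)\}$ supplied by domination, and $x \in Int(X)$ handled by the argument of Proposition~\ref{c2-coldSet}). The only difference is that you explicitly verify that $c_1$-domination still yields the points $q_L, q_U$ needed for the interior argument, a point the paper's proof leaves implicit; that check is a worthwhile addition but not a change of approach.
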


\begin{proof}
Our argument is similar to that of
Proposition~\ref{c2-coldSet}.
Let $x=(x_0,y_0) \in X$. Let $f \in C(X,c_2)$
such that $f|_A = \id_A$.
Consider the following.
\begin{itemize}
    \item If $x \in A$ then $f(x)=x$.
    \item If $x \in Bd(X) \setminus A$ then for some $a \in A$,
    $x \adjeq_{c_1} a$. Therefore,
    $f(x) \adjeq_{c_1} f(a)= a$. Thus,
    $\{x,a,f(x)\}$ is a path in $X$ from
    $x$ to $f(x)$ of length at most 2.
    \item If $x \in Int(X)$, then as
    in the proof of Proposition~\ref{c2-coldSet} we have
  that $f(x) = x$.
\end{itemize}
Thus, in all cases, $d(f(x), x) \le 2$, 
and $f|_{Int(X)} = \id_{Int(X)}$.
\end{proof}

An example of a 2-cold set~$A$ that is not
a 1-cold set, such that $A$ is as in
Proposition~\ref{c2-2coldSet}, is given
in the following.

\begin{exl}
Let $X = [0,2]_{\Z}^2$. Let 
\[ A = \{(0,2), (1,0), (2,2)\} \subset X.
\]
Then $A$ is $c_1$-dominating in $Bd(X)$,
so by Proposition~\ref{c2-2coldSet}, is
a 2-cold set for $(X,c_2)$. Let $f: X \to X$
be the function $f(0,0)=(2,0)$,
$f(0,1)=(1,1)$, and $f(x)=x$ for all
$x \in X \setminus \{(0,0),(0,1)\}$.
Then $f \in C(X,c_2)$ but
$d((0,0), f(0,0)) = 2$, so $A$ is not a
1-cold set.

\end{exl}

\begin{prop}
\label{notCold}
Let $X = \prod_{i=1}^n [0,m_i]_{\Z} \subset \Z^n$, 
where $m_i > 1$ for all~$i$.
Let $A \subset Bd(X)$ be such that
$A$ is not $c_n$-dominating in $Bd(X)$.
Then $A$ is not a cold set for $(X,c_n)$.
\end{prop}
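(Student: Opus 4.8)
The plan is to produce a single continuous self-map $f$ on $(X,c_n)$ with $f|_A=\id_A$ that moves some point a path-distance of $2$; since a cold set is by definition a $1$-cold set, this shows $A$ is not a cold set for $(X,c_n)$. To begin, I would use the failure of domination to choose $b=(b_1,\dots,b_n)\in Bd(X)$ with $N_{c_n}^*(b)\cap A=\emptyset$. Since $b\in Bd(X)$, some coordinate satisfies $b_j\in\{0,m_j\}$, and the case $b_j=m_j$ is the mirror image of the case $b_j=0$, so I would assume $b_j=0$.

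Next I would define $f\colon X\to X$ by $f(x)=x+\phi(x)\,e_j$, where $e_j$ is the $j$-th unit vector and
\[
\phi(x)=\max\bigl(0,\; 2-x_j-\max_{i\neq j}|x_i-b_i|\bigr)
\]
(with the empty maximum taken to be $0$ when $n=1$). Then I would verify: (a) $f(X)\subseteq X$, because $\phi(x)>0$ implies $\phi(x)\le 2-x_j$, so $0\le x_j+\phi(x)\le 2\le m_j$ (using $m_j>1$), while the other coordinates are unchanged; (b) $f|_A=\id_A$, because $\phi(x)>0$ forces $x_j+\max_{i\neq j}|x_i-b_i|\le 1$, hence $|x_i-b_i|\le 1$ for every $i$, i.e.\ $x\adjeq_{c_n}b$, so the set of points moved by $f$ is contained in $N_{c_n}^*(b)$, which is disjoint from $A$; and (c) $f$ is $c_n$-continuous.

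Step (c) is the heart of the argument and the step I expect to be the main obstacle. If $x\adj_{c_n}y$, then $f(x)$ and $f(y)$ agree with $x$ and $y$ in every coordinate but the $j$-th, so it suffices to show $|(x_j-y_j)+(\phi(x)-\phi(y))|\le 1$. Writing $g(x)=2-x_j-\max_{i\neq j}|x_i-b_i|$, so that $\phi=\max(0,g)$, one has $g(x)-g(y)=-a-\delta$, where $a=x_j-y_j\in\{-1,0,1\}$ and $\delta=\max_{i\neq j}|x_i-b_i|-\max_{i\neq j}|y_i-b_i|\in\{-1,0,1\}$ (the latter by the reverse triangle inequality, since $x\adj_{c_n}y$); also $a$, $\delta$, $g(x)$, $g(y)$ are integers. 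A short case split on the signs of $g(x)$ and $g(y)$ then gives the bound: when $g(x)$ and $g(y)$ are both $\le 0$ the quantity equals $a$, when both are $\ge 0$ it equals $-\delta$, and in the mixed case $g(x)>0\ge g(y)$ integrality together with $0<g(x)-g(y)\le 2$ forces $g(x)=1$, $g(y)=-1$, $a=\delta=-1$, so the quantity is $0$ (the opposite mixed case is symmetric).

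Finally I would conclude: $f(b)=b+2e_j$ lies in $X$ and differs from $b$ only in the $j$-th coordinate, there by $2$, so $f(b)$ is neither equal nor $c_n$-adjacent to $b$, while $b,\,b+e_j,\,b+2e_j$ is a $c_n$-path in $X$; hence $d(b,f(b))=2>1$. Thus $b$ is not an approximate fixed point of $f$, and by Definition~\ref{s-cold-def} (with $s=1$) $A$ is not a cold set for $(X,c_n)$, as claimed.
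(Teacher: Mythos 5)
Your proof is correct and follows essentially the same route as the paper: both take the undominated boundary point, push it two units inward along a coordinate direction $j$ with $b_j\in\{0,m_j\}$, and taper the displacement to zero across $N_{c_n}^*(b)$ so that the resulting self-map is $c_n$-continuous, fixes $A$, and moves $b$ a distance of $2$. The only difference is cosmetic: the paper defines the map piecewise on $\{b\}\cup N_{c_n}(b)$ and checks the adjacency cases directly, whereas you encode the same taper in the single formula $\phi(x)=\max\bigl(0,\,2-x_j-\max_{i\neq j}|x_i-b_i|\bigr)$ and verify continuity by an algebraic case split.
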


\begin{proof}
By hypothesis, there exists
$y=(y_1,\ldots,y_n) \in Bd(X) \setminus A$ such that
$N(y,c_n) \cap A = \emptyset$.

Since $y \in Bd(X)$, for some index~$j$
we have $y_j \in \{0,m_j\}$. Let
$x=(x_1,\ldots, x_n) \in X$, for
$x_i \in [0,m_i]_{\Z}$.
\begin{itemize}
\item If $y_j=0$, let $f: X \to X$ 
be defined as follows.
\[ f(x) = \left \{ \begin{array}{cl}
    (x_1,\ldots, x_{j-1}, 2, x_{j+1}, \ldots,x_n) & \mbox{if } x=y; \\
    (x_1,\ldots, x_{j-1}, 1, x_{j+1}, \ldots,x_n) & \mbox{if } x \in N_{c_n}(y); \\
    x & \mbox{otherwise.}
    \end{array} \right .
\]
If $u,v \in X$, $u \adjeq_{c_n} v$, then
$u$ and $v$ differ by at most 1 in every 
coordinate. Consider the following
cases.
\begin{itemize}
    \item If $u=y$, then $v \in N_{c_n}(y)$, and clearly $f(u)$ and 
    $f(v)$ differ by at most 1 in every coordinate, hence are $c_n$-adjacent.
    Similarly if $v=y$.
    \item If $u,v \in N_{c_n}(y)$, then clearly $f(u)$ and $f(v)$ differ by 
    at most 1 in every coordinate, hence are $c_n$-adjacent.
    \item If $u \in N_{c_n}(y)$ and $v \not \in N_{c_n}(y)$, then 
    $p_j(u) \in \{0,1\}$, so $p_j(f(v)) = p_j(v) \in \{0,1,2\}$, and $p_j(f(u))=1$. It follows easily that
    $f(u)$ and $f(v)$ differ by at most 1 in every coordinate, hence are $c_n$-adjacent.
    Similarly if $v \in N_{c_n}(y)$ and $u \not \in N_{c_n}(y)$
    \item Otherwise, $\{u,v\} \cap N_{c_n}^*(y) = \emptyset$, so $f(u) = u \adjeq_{c_n} v=f(v)$.
\end{itemize}
Therefore, $f \in C(X,c_n)$.
\item If $y_j=m_j$, let $f: X \to X$ 
be defined by
\[ f(x) = \left \{ \begin{array}{cl}
    (x_1,\ldots, x_{j-1}, m_j-2, x_{j+1}, \ldots,x_n) & \mbox{if } x=y; \\
    (x_1,\ldots, x_{j-1}, m_j-1, x_{j+1}, \ldots,x_n) & \mbox{if } x \in N_{c_n}(y); \\
    x & \mbox{otherwise.}
    \end{array} \right .
\]
By an argument similar to that of the
case $y_j=0$, we conclude that
$f \in C(X,c_n)$.
\end{itemize}

Further, in both cases, $f|_A = \id_A$, and $f(y) \not \adjeq_{c_n} y$.
The assertion follows.
\end{proof}

\subsection{$s$-cold sets for rectangles in $\Z^2$}
The following generalizes the case $n=2$ of
Theorem~\ref{corners-min}.

\begin{prop}
Let $X = [-m,m]_{\Z} \times [-n,n]_{\Z} \subset \Z^2$, 
$s \in \N^*$, where $s \le \min\{m,n\}$. Let
\[ A = \{(-m+s, -n+s), (-m+s,n-s), (m-s, -n+s),
         (m-s,n-s)\}.
\]
Then $A$ is a $4s$-cold set for $(X,c_1)$.
\end{prop}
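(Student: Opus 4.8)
The plan is to reduce this to a statement about how far a $c_1$-continuous self-map of $X$ can move a point, given that it fixes the four points of $A$. First I would observe that $A$ is precisely the corner set of the smaller sub-rectangle $X_s = [-m+s,m-s]_{\Z} \times [-n+s,n-s]_{\Z}$. Applying Theorem~\ref{corners-min} (the case $n=2$) to $X_s$ sitting inside the ambient rectangle $X$: since $f|_{X_s \cap A} = f|_A = \id_A$ and $A$ consists of all four corners of $X_s$, the first bullet of Theorem~\ref{corners-min} (with the enclosing box $Y$ taken to be all of $X$, which contains $X_s$ as a sub-box) gives $X_s \subset \Fix(f)$. So $f$ is the identity on the entire concentric rectangle obtained by shrinking each side by $s$.

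The remaining task is to bound $d(x,f(x))$ for $x \in X \setminus X_s$. I would argue coordinatewise using Lemma~\ref{c1pulling}. Fix $x = (x_1,x_2) \in X$ and suppose, say, $x_1 > m-s$ (the other three "slab" cases and the corner regions are symmetric). Consider the horizontal $c_1$-path from $x$ straight in toward $X_s$: its endpoint on the line $p_1 = m-s$ lies in $\Fix(f)$. If $f$ moved $x$ so that $p_1(f(x)) > x_1$, then the contrapositive-style pulling argument of Lemma~\ref{c1pulling} (applied along that path, propagating the outward displacement inward) would force $p_1 = m-s$ point to be moved, contradiction; similarly $p_1(f(x))$ cannot be less than $m-s$ itself, since that point is fixed and $f$ is continuous along the path. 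Hence $m-s \le p_1(f(x)) \le m$ while $m-s \le x_1 \le m$, giving $|p_1(f(x)) - x_1| \le s$. Running the same argument in the second coordinate yields $|p_2(f(x)) - x_2| \le s$ as well. In general, for arbitrary $x \in X$, each coordinate of $f(x)$ is pulled to within $s$ of the corresponding coordinate of $x$, i.e. $|p_i(f(x)) - x_i| \le s$ for $i = 1,2$.

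Finally I would convert this $\ell^\infty$-type bound into a path-length bound. Since consecutive coordinates can be corrected one at a time by $c_1$-moves, there is a $c_1$-path in $X$ from $x$ to $f(x)$ of length $|p_1(f(x)) - x_1| + |p_2(f(x)) - x_2| \le 2s$; one must check this path stays inside $X$, which holds because both $x$ and $f(x)$ lie in the axis-aligned box $X$ and the monotone "staircase" between them does too. Wait — $2s$ is better than the claimed $4s$, so either the bound is simply stated loosely, or the intended argument only controls each coordinate to within $2s$ (e.g.\ if one pulls toward $X_s$ from two opposite sides and only gets $p_i(f(x)) \in [-m+s, m-s]$ type containment rather than pinning near $x_i$), in which case $|p_i(f(x)) - x_i| \le 2s$ and the path length is $\le 4s$. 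I would present the coordinatewise bound at whatever strength the pulling lemma cleanly delivers and then sum, concluding $d(x,f(x)) \le 4s$ for all $x \in X$, hence $A$ is a $4s$-cold set. The main obstacle is the bookkeeping in the second paragraph: correctly setting up the inward paths and invoking Lemma~\ref{c1pulling} in each slab and corner region so that the outward component of any displacement is genuinely propagated to a fixed point, and being honest about exactly which interval $p_i(f(x))$ is confined to.
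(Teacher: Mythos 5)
Your first step is sound and reaches the same waypoint as the paper: the paper fixes the inner rectangle $A' = [-m+s,m-s]_{\Z} \times [-n+s,n-s]_{\Z}$ by applying Proposition~\ref{uniqueShortest} to get $Bd(A') \subset \Fix(f)$ and then Proposition~\ref{bdAndInt} to get $A' \subset \Fix(f)$; your invocation of the first bullet of Theorem~\ref{corners-min} (which is itself built on those two results) delivers the same conclusion and is fine modulo a harmless translation.

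The second half, however, has a genuine gap. The per-coordinate bound $|p_i(f(x)) - x_i| \le s$ that you first assert is false: Lemma~\ref{c1pulling} only forbids \emph{outward} displacement (it gives $p_1(f(x)) \le x_1$ when $x_1 > m-s$), while the inward direction is controlled only by continuity along the length-$\le s$ path to the line $p_1 = m-s$, which yields $p_1(f(x)) \ge x_1 - 2s$, not $x_1 - s$. Indeed for $X=[-2,2]_{\Z}^2$, $s=1$, one can build $f \in C(X,c_1)$ fixing $[-1,1]_{\Z}^2$ with $f(2,0)=(0,0)$, so a single coordinate really can move by $2s$. Worse, in the corner regions (say $x_1 > m-s$ \emph{and} $x_2 > n-s$) the horizontal inward path from $x$ terminates at $(m-s,x_2)$ with $x_2 > n-s$, a point not in $A'$ and hence not known to be fixed, so the slab argument does not transfer ``by symmetry,'' and your closing sentence concedes you do not know which bound the pulling lemma delivers. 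All of this machinery is unnecessary: the paper's own second step is simply that any $x \in X \setminus A'$ admits a $c_1$-path $P$ of length at most $2s$ to some $y \in Bd(A')$; continuity makes $f(P)$ a path of length at most $2s$ from $f(x)$ to $f(y)=y$; concatenating gives $d(x,f(x)) \le 4s$. Replacing your second paragraph with that two-line metric argument closes the gap.
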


\begin{proof}
Let $f \in C(X,c_1)$ such that $f|_A = \id_A$.
Let 
\[ A' = [-m+s,m-s]_{\Z} \times [-n+s,n-s]_{\Z}.
\]
By Proposition~\ref{uniqueShortest},
$Bd(A') \subset \Fix(f)$. It follows from
Proposition~\ref{bdAndInt} that
$A' \subset \Fix(f)$.

Thus it remains to show that
$x \in X \setminus A'$ implies
$d(x,f(x)) \le 4s$. This is seen as follows.
For $x \in X \setminus A'$, there exists
a $c_1$-path $P$ of length at most $2s$ from 
$x$ to some $y \in Bd(A')$. Then
$f(P)$ is a $c_1$-path from $f(x)$ to $f(y)=y$
of length at most $2s$. Therefore,
$P \cup f(P)$ contains a path from $x$ to $y$ to
$f(x)$ of length at most $4s$. The assertion
follows.
\end{proof}

The following generalizes Proposition~\ref{c2-coldSet}.

\begin{prop}
Let $X = [-m,m]_{\Z} \times [-n,n]_{\Z} \subset \Z^2$, 
$s \in \N^*$, where $m-s \ge 0$, $n-s \ge 0$. Let
\[ A = [-m+s,m-s]_{\Z} \times [-n+s,n-s]_{\Z}
   \subset X.
\]
Let $A' \subset Bd(A)$ such that no pair of
$c_1$-adjacent members of $Bd(A)$ belongs to
$Bd(A) \setminus A'$.

Then $A'$ is a $2s$-cold set for
          $(X,c_2)$.

Further, if $f \in C(X,c_2)$ and 
$f|_{A'} = \id_{A'}$, then $f|_A = \id_A$.
\end{prop}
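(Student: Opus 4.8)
The plan is to imitate the proof of Proposition~\ref{c2-coldSet}, with the shrunken rectangle $A$ playing the role there played by the ambient cube and the ``collar'' $X\setminus A$ treated separately; the heart of the matter is to show $f|_A=\id_A$ for any $f\in C(X,c_2)$ with $f|_{A'}=\id_{A'}$, after which the $2s$-cold conclusion is cheap. Note first that the hypothesis on $A'$ — that $Bd(A)\setminus A'$ contains no two $c_1$-adjacent points — forces at least one of any two consecutive points of an edge of $Bd(A)$ into $A'$, so in particular every three consecutive points of such an edge meet $A'$. For $x=(x_0,y_0)\in Int(A)$ I would then argue exactly as in the interior case of Proposition~\ref{c2-coldSet}: choose $q_L\in A'$ on the bottom edge of $A$ and $q_U\in A'$ on the top edge with $|p_1(q_L)-x_0|\le 1$ and $|p_1(q_U)-x_0|\le 1$, build an injective $c_2$-path $P$ inside $A$ from $q_L$ through $x$ to $q_U$ whose length equals the height of $A$ and which reaches $x$ after exactly $y_0+n-s$ steps, and observe that $f\circ P$ is a $c_2$-path from $q_L$ to $f(x)$ to $q_U$; projecting onto the second coordinate, which changes by at most $1$ per step, forces $p_2(f(x))=y_0$, and the symmetric argument with a horizontal collar gives $p_1(f(x))=x_0$, hence $f(x)=x$.

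Next, for $x\in Bd(A)$: if $x\in A'$ then $f(x)=x$; otherwise every $c_1$-neighbor of $x$ lying on $Bd(A)$ belongs to $A'$ and is therefore fixed, and the inward $c_1$-neighbor of $x$ lies in $Int(A)$ and is fixed by the previous paragraph. Applying continuity of $f$ to these fixed neighbors constrains $f(x)$ to lie within $c_2$-distance $1$ of $x$, with every step of $x$ away from $Int(A)$ excluded; ruling out the remaining moves still permitted, so as to conclude $f(x)=x$, is the step I return to below. Granting it, $Bd(A)\subset\Fix(f)$, so Proposition~\ref{bdAndInt} yields $Int(A)\subset\Fix(f)$ as well and hence $f|_A=\id_A$, which is the ``Further'' assertion. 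For the cold statement, given $x\in X\setminus A$ let $y$ be the point of $A$ obtained by clamping each coordinate of $x$ into the interval defining $A$; then $y\in Bd(A)$, $d(x,y)=\max_i|p_i(x)-p_i(y)|\le s$, and this distance is realized by a $c_2$-path $P$ in $X$. Since $f(y)=y$, the path $f\circ P$ has length at most $s$ and runs from $f(x)$ to $y$, so $P$ followed by the reverse of $f\circ P$ is a $c_2$-path from $x$ to $f(x)$ of length at most $2s$; together with $d(x,f(x))=0$ for $x\in A$ this shows $A'$ is a $2s$-cold set for $(X,c_2)$.

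The main obstacle is the inward-step elimination for a point $x\in Bd(A)\setminus A'$: the argument borrowed from Proposition~\ref{c2-coldSet} by itself only shows $x$ is moved at most one step, and upgrading ``moved at most one step'' to ``fixed'' is precisely the distinction between a cold set and a freezing set, so the three-in-a-row condition on $A'$ must do real work here. My first attempt would be to exhibit, for each such $x$, two points of $A'$ joined by a \emph{unique} shortest $c_2$-path of $X$ that passes through $x$, so that Proposition~\ref{uniqueShortest} applies; failing that, I would try to bound the transverse coordinate of $f(x)$ from the opposite side by projecting $f$ along a $c_2$-path that runs from $x$ into $Int(A)$ and then out to a point of $A'$ on the far edge, forcing that coordinate of $f(x)$ to equal the corresponding coordinate of $x$. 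I expect this to be the technically most delicate part, and I am not certain it goes through for every admissible $A'$; if it does not, the honest statement is that $A'$ is $2s$-cold with the ``Further'' clause weakened to $f|_{Int(A)}=\id_{Int(A)}$, exactly in parallel with Proposition~\ref{c2-coldSet}, the cold bound then following by choosing the target point $y$ above in $A'\cup Int(A)$.
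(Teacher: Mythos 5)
Your proposal follows the same route as the paper's own proof: establish $f|_A=\id_A$ by running the argument of Proposition~\ref{c2-coldSet} on the inner rectangle, then handle the collar $X\setminus A$ by concatenating a path of length at most $s$ with its image under $f$. The gap you flag --- upgrading a point of $Bd(A)\setminus A'$ from ``moved by at most one step'' to ``fixed'' --- is genuine, and in fact it cannot be closed: the ``Further'' clause is false as stated. Take $m=n=2$, $s=1$, so $X=[-2,2]_{\Z}^2$ and $A=[-1,1]_{\Z}^2$, and let $A'=\{(\pm 1,0),(0,\pm 1)\}$; then $Bd(A)\setminus A'$ consists of the four corners of $A$, no two of which are $c_1$-adjacent, so $A'$ is admissible. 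Define $f$ to be the identity except that $f(1,1)=(1,0)$, $f(0,2)=(0,1)$, $f(1,2)=(1,1)$, $f(2,2)=(2,1)$. A direct check of the finitely many adjacent pairs shows $f\in C(X,c_2)$ and $f|_{A'}=\id_{A'}$, yet $f$ moves $(1,1)\in Bd(A)$. So your fallback conclusion $f|_{Int(A)}=\id_{Int(A)}$, obtained exactly as in the interior case of Proposition~\ref{c2-coldSet}, is the most that can be said on $A$. Be aware that the paper's own proof does not close this gap either: it asserts ``as in the proof of Proposition~\ref{c2-coldSet}, $f|_A=\id_A$,'' although that proposition only delivers the identity on the interior together with approximate fixedness on the boundary.

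Your treatment of the collar then inherits a second, quantitative defect that your closing sentence does not resolve. Both your clamping argument and the paper's require every $x\in X\setminus A$ to lie within path-distance $s$ of a point known to be fixed; once only $A'\cup Int(A)$ is guaranteed fixed, the corner $(m,n)$ of $X$ fails this, since the unique point of $A$ within $c_2$-distance $s$ of $(m,n)$ is the corner $(m-s,n-s)$ of $A$, which the hypothesis permits to lie outside $A'$, and every point of $A'\cup Int(A)$ is then at distance $s+1$. This is not merely a defect of the argument: in the example above, the map $f(p)=p-h(p)\cdot(1,1)$ with $h\equiv 0$ except $h(1,1)=h(2,0)=h(0,2)=1$, $h(2,1)=h(1,2)=2$, $h(2,2)=3$ is $c_2$-continuous, fixes $A'$, and sends $(2,2)$ to $(-1,-1)$, at distance $3>2s$. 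So the $2s$ bound itself fails for admissible $A'$ omitting a corner of $A$. A correct version of the statement (and of either proof) is obtained by additionally requiring the four corners of $A$ to belong to $A'$, or else by weakening the conclusions to ``$A'$ is a $(2s+2)$-cold set'' and ``$f|_{Int(A)}=\id_{Int(A)}$.''
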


\begin{proof}
Let $f \in C(X,c_2)$ be such that
$f|_{A'} = \id_{A'}$. As in
the proof of Proposition~\ref{c2-coldSet},
$f|_A = \id_A$.

Now consider $x \in X \setminus A$. There is a $c_2$-path $P$ in $X$
          from $x$ to some $y \in A'$ of
          length at most $s$. Then $f(P)$
          is a $c_2$-path in $X$ from $f(x)$ to
          $f(y)=y$ of length at most $s$.
          Therefore, $P \cup f(P)$ contains
          a $c_2$-path in $X$ from $x$ to $y$
          to $f(x)$ of length at most $2s$. 
The assertion follows.
\end{proof}

\subsection{$s$-cold sets for Cartesian products}
We modify the proof of Theorem~\ref{prodFreezing} to
obtain the following.

\begin{thm}
\label{prodCold}
Let $(X_i,\kappa_i)$ be a
digital image, $i \in [1,v]_{\Z}$. 
Let $X = \prod_{i=1}^v X_i$.
Let $s \in \N^*$.
Let $A \subset X$. Suppose
$A$ is an $s$-cold set
for $(X,NP_v(\kappa_1,\ldots,\kappa_v))$. Then
for each $i \in [1,v]_{\Z}$,
$p_i(A)$ is an $s$-cold set
for $(X_i,\kappa_i)$.
\end{thm}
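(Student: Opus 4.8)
The plan is to mirror the proof of Theorem~\ref{prodFreezing} almost verbatim, substituting the $s$-cold conclusion for the freezing conclusion. Fix an index $i \in [1,v]_{\Z}$. I want to show $p_i(A)$ is an $s$-cold set for $(X_i,\kappa_i)$, so I start with an arbitrary $f_i \in C(X_i,\kappa_i)$ satisfying $f_i|_{p_i(A)} = \id_{p_i(A)}$, and I must produce a bound $d(z, f_i(z)) \le s$ for every $z \in X_i$, where $d$ is the path-length metric in $(X_i,\kappa_i)$.

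The key step is to promote $f_i$ to a self-map $F$ of the whole product $X$ by acting as the identity on all other coordinates: define $F(x_1,\ldots,x_v) = (x_1,\ldots,x_{i-1}, f_i(x_i), x_{i+1},\ldots,x_v)$. This is the product map $f_1 \times \cdots \times f_v$ with $f_j = \id_{X_j}$ for $j \ne i$, so by Theorem~\ref{prodContinuity} it is $NP_v(\kappa_1,\ldots,\kappa_v)$-continuous. I next check $F|_A = \id_A$: for $a = (a_1,\ldots,a_v) \in A$ we have $a_i \in p_i(A)$, so $f_i(a_i) = a_i$, and the other coordinates are untouched, giving $F(a) = a$. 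Since $A$ is $s$-cold for $(X, NP_v(\kappa_1,\ldots,\kappa_v))$, Definition~\ref{s-cold-def} yields $d_{X}(x, F(x)) \le s$ for every $x \in X$, where $d_X$ is the path-length metric for the normal product adjacency.

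The one genuinely new ingredient — the main obstacle, though a mild one — is relating the metric on the product to the metric on the factor $X_i$. Given $z \in X_i$, pick any point $x \in X$ with $p_i(x) = z$ (e.g. pad with fixed choices in the other coordinates). A shortest $NP_v(\kappa_1,\ldots,\kappa_v)$-path $Q$ in $X$ from $x$ to $F(x)$ has length at most $s$; I project it by $p_i$. I need that $p_i$ sends an $NP_v$-path to a $\kappa_i$-path of no greater length. This holds because if $q \adjeq_{NP_v(\kappa_1,\ldots,\kappa_v)} q'$ then by the definition of $NP_u$ adjacency, in each coordinate the entries are either equal or $\kappa$-adjacent; in particular $p_i(q) \adjeq_{\kappa_i} p_i(q')$, so consecutive points of $p_i(Q)$ are $\kappa_i$-adjacent-or-equal, and collapsing repeats only shortens the path. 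Since $p_i(x) = z$ and $p_i(F(x)) = f_i(z)$, we get a $\kappa_i$-path in $X_i$ from $z$ to $f_i(z)$ of length at most $s$, i.e. $d_{X_i}(z, f_i(z)) \le s$. As $z$ was arbitrary, $p_i(A)$ is $s$-cold for $(X_i,\kappa_i)$, and as $i$ was arbitrary the theorem follows. (One should also note $p_i(A)$ is nonempty and $X_i$ connected exactly under the same tacit conventions used throughout the section for $s$-cold sets.)
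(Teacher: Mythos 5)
Your proof is correct and follows essentially the same route as the paper's: promote $f_i$ to a product self-map $F$ of $X$ (the paper uses arbitrary $f_j$ in every factor, you use identities off the $i$th coordinate, which is an immaterial difference), verify $F|_A=\id_A$, and invoke the $s$-cold property of $A$. In fact you are slightly more careful than the paper, which asserts $d(f_i(x_i),x_i)\le s$ without spelling out that $p_i$ carries an $NP_v(\kappa_1,\ldots,\kappa_v)$-path to a $\kappa_i$-path of no greater length; your explicit justification of that projection step is a welcome addition.
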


\begin{proof}
Let $f_i \in C(X_i,\kappa_i)$.
Let $F: X \to X$ be
defined by
\[ F(x_1,\ldots,x_v) =
   (f_1(x_1), \ldots, f_v(x_v)).
\]
Then by Theorem~\ref{prodContinuity},
$F \in C(X,NP_v(\kappa_1,\ldots, \kappa_v))$.

Suppose for all $i$, $a_i \in p_i(A)$,
we have $f_i(a_i)=a_i$. Note
this implies, for $a = (a_1,\ldots,a_v)$, that $F(a)=a$.
Since $a$ is an arbitrary member of
the $s$-set $A$ of $X$, we
have that $d(F(x), x) \le s$,
for all $x = (x_1,\ldots, x_v) \in X$, $x_i \in X_i$, and therefore,
$d(f_i(x_i), x_i) \le s$. The assertion follows.
\end{proof}

\subsection{$s$-cold sets for infinite digital images}
In this section, we obtain properties of
$s$-cold sets for some infinite digital images.

\begin{thm}
\label{infiniteFreeze}
Let $(\Z^n, c_u)$ be a digital image, 
$1 \le u \le n$. Let $A \subset \Z^n$.
Let $s \in \N^*$. If
$A$ is an $s$-cold set for
$(\Z^n, c_u)$, then
for every index~$i$, $p_i(A)$ is an infinite 
set, with sequences of members
tending both to $\infty$ and to $-\infty$.
\end{thm}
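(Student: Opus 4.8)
The plan is to argue by contradiction: suppose $p_i(A)$ fails to have members tending to $+\infty$ (the case of $-\infty$ is symmetric). Then there is an integer $M$ such that $p_i(a) \le M$ for every $a \in A$. I would then build a continuous self-map $f$ of $\Z^n$ that fixes all of $A$ but moves some point by more than $s$ in the $i^{th}$ coordinate, contradicting that $A$ is $s$-cold. The natural candidate is a ``pushing'' map that is the identity on the half-space $H = \{x \in \Z^n : p_i(x) \le M\}$ (which contains $A$) and translates points with large $i^{th}$ coordinate. Concretely, fix any $s' > s$ and define $f(x) = x$ if $p_i(x) \le M$, and otherwise increase the $i^{th}$ coordinate; for instance $f(x_1,\ldots,x_n) = (x_1,\ldots,x_{i-1}, \min\{p_i(x)+s', \text{something}\}, x_{i+1},\ldots,x_n)$ — but since $\Z^n$ is unbounded there is no ceiling needed, so simply $f$ adds $s'$ to the $i^{th}$ coordinate once $p_i(x) > M$. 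This is not quite continuous at the interface $p_i(x) = M+1$, so instead I would use a ``ramp'': let $f$ add $\max\{0, \min\{p_i(x) - M, s'\}\}$ to the $i^{th}$ coordinate, i.e.
\[
f(x_1,\ldots,x_n) = \bigl(x_1,\ldots,x_{i-1},\, x_i + \max\{0,\min\{x_i - M,\, s'\}\},\, x_{i+1},\ldots,x_n\bigr).
\]

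The key steps in order are: (1) verify $f$ is $c_u$-continuous — if $x \adj_{c_u} y$ then $x$ and $y$ differ by at most $1$ in each coordinate, and since $t \mapsto \max\{0,\min\{t-M,s'\}\}$ is a non-decreasing function of $t$ changing by at most $1$ when $t$ changes by $1$, the $i^{th}$ coordinates of $f(x),f(y)$ differ by at most $2$... — actually here one must be slightly careful, since a change of $1$ in $x_i$ produces a change of at most $1$ in the ramp value, giving a total change of at most $2$ in $p_i(f(x))$, which would break $c_u$-continuity. So I would instead use the ramp with \emph{unit slope capped at $s'$} but note that $p_i(f(x)) = x_i + \text{ramp}(x_i)$ changes by $1 + (\text{0 or }1) \in \{1,2\}$; to fix this, use the map that simply equals $x_i$ for $x_i \le M$ and equals $M + s' + (x_i - M)$ for $x_i > M$ — no, that jumps by $s'$. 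The clean fix: let $f$ fix everything with $p_i(x) \le M$, and for $p_i(x) = M + k$ with $k \ge 1$ set the new $i^{th}$ coordinate to $M + \min\{k, s'\} \cdot 1$... the genuinely correct gadget is $p_i(f(x)) = M + \min\{x_i - M, \lceil s'/1\rceil\}$ composed so the \emph{total} output is monotone with steps in $\{0,1\}$: define $g:\Z\to\Z$ by $g(t) = t$ for $t \le M$ and $g(t) = M$ for $t > M$ (a retraction of $\Z$ onto $(-\infty,M]_\Z$), then observe this moves the point $M + s'+1$ by exactly $s'+1 > s$ while being $2$-continuous, hence $c_u$-continuous, and fixing $A$. (2) Check $f|_A = \id_A$: every $a \in A$ has $p_i(a) \le M$, so $f(a) = a$. (3) Exhibit a point moved too far: the point $x^\ast$ with $p_i(x^\ast) = M + s + 1$ and other coordinates arbitrary satisfies $f(x^\ast) = x^\ast$ only in coordinates $\ne i$ and $p_i(f(x^\ast)) = M$, so $d(x^\ast, f(x^\ast)) \ge |p_i(x^\ast) - p_i(f(x^\ast))| = s+1 > s$, using that the path-length metric dominates the $\ell_\infty$-difference in any single coordinate for $c_u$-adjacency. (4) Conclude this contradicts $s$-coldness of $A$; hence $p_i(A)$ is unbounded above, and symmetrically unbounded below, so in particular $p_i(A)$ is infinite.

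The main obstacle is step (1): producing a gadget that is genuinely $c_u$-continuous while guaranteeing a point is displaced by strictly more than $s$ in coordinate $i$. The retraction $g$ of $\Z$ onto $(-\infty, M]_\Z$ — extended coordinatewise via $f(x_1,\ldots,x_n) = (x_1,\ldots,x_{i-1}, g(x_i), x_{i+1},\ldots,x_n)$ — resolves this: it is the product of the identity on the other factors with a $c_1$-continuous retraction on the $i^{th}$ factor, hence $c_u$-continuous by the adjacency definition (a change of $1$ in $x_i$ changes $g(x_i)$ by $0$ or $1$), it fixes $A \subset g^{-1}((-\infty,M]_\Z)$-slab, and it collapses the entire half-space $p_i(x) > M$ downward, moving points arbitrarily far. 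One secondary subtlety: one must confirm $\Z^n$ actually \emph{contains} a point at $i$-coordinate $M + s + 1$ with arbitrary remaining coordinates, which is immediate. I would also remark that the lower-bound direction uses the mirror-image retraction onto $[M', \infty)_\Z$ in coordinate $i$, where $M'$ bounds $p_i(A)$ from below.
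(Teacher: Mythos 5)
Your proposal is correct and, once the false starts with the ``ramp'' maps are discarded, lands on essentially the same construction as the paper: assuming $p_i(A)$ is bounded on one side, retract $\Z^n$ coordinatewise onto the half-space containing $A$ via $g(t)=t$ for $t$ on the bounded side and $g(t)=$ the bound otherwise, which is $c_u$-continuous, fixes $A$, and displaces a suitably chosen point by more than $s$. Your step (3), verifying that the path-length metric dominates the single-coordinate displacement so that $d(x^\ast,f(x^\ast))\ge s+1>s$, is a detail the paper's proof leaves implicit (it only notes $f\neq\id_{\Z^n}$), so it is a welcome addition rather than a deviation.
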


\begin{proof}
Suppose otherwise. Then for some~$i$, 
there exist $m$ or $M$ in $\Z$ such that
\[ m = \min\{p_i(a) \, | \, a \in A\}~~~
   \mbox{or}~~~M = \max\{p_i(a) \, | \, a \in A\}.
\]

If the former, then
for $z = (z_1, \ldots, z_n) \in \Z^n$,
define $f: \Z^n \to \Z^n$ by
\[ f(z) = \left \{ \begin{array}{cl}
   (z_1, \ldots, z_{i-1}, m, z_{i+1}, \ldots, z_n)
      & \mbox{if } z_i \le m; \\
    z & \mbox{otherwise.}
\end{array} \right .
\]
Then $f \in C(\Z^n, c_u)$ and $f|_A = \id_A$,
but $f \neq \id_{\Z^n}$. Thus, $A$ 
is not an $s$-cold set.

Similarly, if $M < \infty$ as above exists, 
we conclude $A$ is not an $s$-cold set.
\end{proof}

\begin{cor}
$A \subset \Z$ is a freezing set for $(\Z,c_1)$
if and only if $A$ contains
sequences $\{a_i\}_{i=1}^{\infty}$ and
$\{a_i'\}_{i=1}^{\infty}$ such that
$\lim_{i \to \infty} a_i = \infty$ and
$\lim_{i \to \infty} a_i' = -\infty$.
\end{cor}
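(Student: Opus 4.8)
The plan is to derive this corollary from Theorem~\ref{infiniteFreeze} together with Proposition~\ref{intervalFreeze} (for the interval case) and a direct argument in the reverse direction. Observe first that a freezing set is exactly a $0$-cold set, so if $A \subset \Z$ is a freezing set for $(\Z,c_1)$, then $A$ is $s$-cold for $s=0$, and Theorem~\ref{infiniteFreeze} with $n=1$, $u=1$ applies: it tells us precisely that $p_1(A)=A$ is infinite with sequences tending to both $\infty$ and $-\infty$. This is the ``only if'' direction, essentially for free.

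For the ``if'' direction, suppose $A$ contains sequences $\{a_i\}$ and $\{a_i'\}$ with $a_i \to \infty$ and $a_i' \to -\infty$. Let $g \in C(\Z,c_1)$ with $g|_A = \id_A$; I must show $g = \id_{\Z}$. Fix an arbitrary $x \in \Z$. Choose indices so that $a_i' < x < a_j$ for suitable $i,j$; set $a = a_i'$ and $b = a_j$, so $a,b \in A \subset \Fix(g)$ and $a < x < b$. Now restrict attention to the interval $[a,b]_{\Z}$: since $g$ is $c_1$-continuous on $\Z$ and $\{a,\ldots,b\}$ is $c_1$-connected, $g([a,b]_{\Z})$ need not lie in $[a,b]_{\Z}$, so I cannot directly invoke the first bullet of Proposition~\ref{intervalFreeze} as stated, which assumes $f: [a,b]_{\Z} \to [c,d]_{\Z}$. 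However, the proof of that proposition only uses $c_1$-continuity of $f$ restricted to $[a,b]_{\Z}$ together with the endpoint conditions $a,b \in \Fix(f)$; the ``monotone-pulling'' argument (a smallest $t_0$ with $f(t_0)>t_0$ forces $f(t_0-1)\le t_0-1$, breaking continuity, and symmetrically) works verbatim for $g|_{[a,b]_{\Z}}: [a,b]_{\Z} \to \Z$. So $g|_{[a,b]_{\Z}} = \id_{[a,b]_{\Z}}$, hence $g(x) = x$. Since $x$ was arbitrary, $g = \id_{\Z}$, and $A$ is a freezing set.

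Alternatively, and perhaps more cleanly for the write-up, one can phrase the ``if'' direction via Lemma~\ref{c1pulling}: if $g(x) \ne x$ for some $x$, say $p_1(g(x)) > x$ (the other case being symmetric), then applying part~(1) of Lemma~\ref{c1pulling} repeatedly along the points $x, x+1, x+2, \ldots$ shows $p_1(g(x+k)) > x+k$ for all $k \ge 0$; but picking $a_j \in A$ with $a_j \ge x$ gives $g(a_j) = a_j$, a contradiction. The symmetric case uses part~(2) and a sequence $a_i' \le x$. The main (and only mild) obstacle is the bookkeeping needed to legitimately apply Proposition~\ref{intervalFreeze} or Lemma~\ref{c1pulling} in the unbounded ambient image $\Z$ rather than a bounded interval; since both underlying arguments are local and only invoke $c_1$-continuity near the point in question, this is routine, but it should be remarked on rather than glossed. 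I expect the final proof to be three or four sentences: cite Theorem~\ref{infiniteFreeze} for one direction and give the Lemma~\ref{c1pulling} pulling argument for the other.
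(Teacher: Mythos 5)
Your proposal is correct in its main line and follows essentially the same route as the paper, whose entire proof is the single sentence ``This follows from Lemma~\ref{c1pulling} and Theorem~\ref{infiniteFreeze}'': the ``only if'' direction from Theorem~\ref{infiniteFreeze} (a freezing set being a $0$-cold set), and the ``if'' direction from a pulling argument. Your primary write-up via the endpoint argument in the proof of Proposition~\ref{intervalFreeze} is sound, and your remark that its codomain hypothesis is immaterial is a fair point worth a sentence. One caution on your ``cleaner'' alternative: you have the direction of Lemma~\ref{c1pulling} reversed. Part~(1) requires $p_i(f(q)) > p_i(q) > p_i(q')$, so from $g(x) > x$ it propagates to the point \emph{behind} $x$, giving $g(x-k) > x-k$ for $k \ge 0$; the contradiction then comes from a fixed point $a_i' \le x$ drawn from the sequence tending to $-\infty$, not from $a_j \ge x$. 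As literally stated, $g(x)>x$ does not force $g(x+1)>x+1$ (e.g., $g(x)=x+1=g(x+1)$ is continuous), so your rightward iteration fails; the symmetric case $g(x)<x$ is the one that propagates rightward via part~(2) and uses the sequence tending to $+\infty$. Swapping the two pairings repairs the alternative argument completely.
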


\begin{proof}
This follows from Lemma~\ref{c1pulling} and
Theorem~\ref{infiniteFreeze}.
\end{proof}

The converse of
Theorem~\ref{infiniteFreeze}
is not generally correct, as
shown by the following.

\begin{exl}
Let $A = \{(z,z) \, | \, z \in \Z\} \subset \Z^2$. Then
although $p_1(A)=p_2(A)=\Z$ contains sequences
tending to $\infty$ and to
$-\infty$, $A$ is not an
$s$-cold set for $(\Z^2,c_2)$,
for any~$s$.
\end{exl}

\begin{proof}
Consider $f: \Z^2 \to \Z^2$
defined by $f(x,y)=(x,x)$.
We have $f \in (\Z^2,c_2)$
and $f|_A = \id_A$, but one sees easily that for
all $s$ there exist $(x,y) \in \Z^2$ such that
$d((x,y),f(x,y)) > s$.
\end{proof}

\section{Further remarks}
We have continued the work of~\cite{bs19a} in 
studying fixed point invariants and related
ideas in digital topology.

We have introduced pointed versions of rigidity
and fixed point spectra.

We have introduced the
notions of freezing sets and $s$-cold sets. These
show us that although
knowledge of the fixed point set $\Fix(f)$ 
of a continuous self-map~$f$
on a connected topological space~$X$ generally
gives us little information about the nature of
$f|_{X \setminus \Fix(f)}$, if $f \in C(X,\kappa)$ and
$A \subset \Fix(f)$ is a freezing set or, more
generally, an $s$-cold set for $(X,\kappa)$,
then $f|_{X \setminus \Fix(f)}$ may be
severely limited.

\section{Acknowledgment}
P. Christopher Staecker and an anonymous reviewer were most helpful. They each suggested several
of our assertions, and several corrections.

\thebibliography{11}
\bibitem{Berge}
C. Berge, {\em Graphs and Hypergraphs}, 
2nd edition, North-Holland, Amsterdam, 1976.

\bibitem{Bx94}
L. Boxer, Digitally Continuous Functions, 
{\em Pattern Recognition Letters} 15 (1994), 833-839.

https://www.sciencedirect.com/science/article/abs/pii/0167865594900124

\bibitem{Bx99}
 L. Boxer, A Classical Construction for the Digital Fundamental Group, Journal of Mathematical Imaging and Vision 10 (1999), 51-62.
 
 https://link.springer.com/article/10.1023/A

 \bibitem{BxNormal}
 L. Boxer, Generalized normal product adjacency in digital topology, 
 {\em Applied General Topology} 18 (2) (2017), 401-427

https://polipapers.upv.es/index.php/AGT/article/view/7798/8718

\bibitem{BxAlt}
L. Boxer, Alternate product adjacencies in digital topology,
{\em Applied General Topology} 19 (1) (2018), 21-53

https://polipapers.upv.es/index.php/AGT/article/view/7146/9777

\bibitem{BxVigo}
L. Boxer, 
Fixed points and freezing sets in digital topology, 
{\em Proceedings, Interdisciplinary Colloquium in Topology and its Applications} in Vigo, Spain; 55-61.

\bibitem{BEKLL}
 L. Boxer, O. Ege, I. Karaca, J. Lopez, and J. Louwsma, Digital fixed points, approximate fixed points, and universal functions, 
 {\em Applied General Topology}
 17(2), 2016, 159-172.
 
 https://polipapers.upv.es/index.php/AGT/article/view/4704/6675
 
 \bibitem{BK12}
 L. Boxer and I. Karaca,
 Fundamental groups for digital products, 
 {\em Advances and Applications in Mathematical Sciences} 11(4) (2012), 161-180.
 
 http://purple.niagara.edu/boxer/res/papers/12aams.pdf
 
\bibitem{bs16}
L. Boxer and P.C. Staecker, Fundamental Groups and Euler Characteristics of Sphere-like Digital Images, 
{\em Applied General Topology} 17(2), 2016, 139-158. 

https://polipapers.upv.es/index.php/AGT/article/view/4624/6671

\bibitem{bs19a}
L. Boxer and P.C. Staecker,
Fixed point sets in digital topology, 1,
{\em Applied General Topology}, to appear.

https://arxiv.org/pdf/1901.11093.pdf

\bibitem{CandL}
G. Chartrand and L. Lesniak, {\em Graphs \& Digraphs}, 
2nd ed., Wadsworth, Inc., 
Belmont, CA, 1986.

\bibitem{hmps} 
J. Haarmann, M.P. Murphy, C.S. Peters, and P.C. Staecker,
Homotopy equivalence in finite digital images,
{\em Journal of Mathematical Imaging and Vision}
53 (2015), 288-302.

https://link.springer.com/article/10.1007/s10851-015-0578-8

\bibitem{Han05}
S-E Han, Non-product property of the digital fundamental group, 
{\em Information Sciences} 171 (2005), 7391.

https://www.sciencedirect.com/science/article/pii/S0020025504001008

\bibitem{Khalimsky}
E. Khalimsky,
 Motion, deformation, and homotopy in finite spaces, in
{\em Proceedings IEEE Intl. Conf. on Systems, Man, and Cybernetics},
1987, 227-234.

\bibitem{RosenfeldMAA}
A. Rosenfeld,
Digital topology,
{\em The American Mathematical Monthly} 86 (8) (1979), 621-630.

\bibitem{Rosenfeld}
A. Rosenfeld, `Continuous' functions on digital pictures, {\em Pattern Recognition Letters} 4,
pp. 177-184, 1986.

https://www.sciencedirect.com/science/article/pii/0167865586900176

\end{document}